\definecolor{light-gray}{gray}{0.9}
\definecolor{light-gray0}{gray}{0.7}
\newtheorem{dfn}{Definition}[section]
\newtheorem{lem}[dfn]{Lemma}
\newtheorem{thm}[dfn]{Theorem}
\theoremstyle{definition}
\newtheorem{asm}[dfn]{Assumption}
\newtheorem{rem}[dfn]{Remark}
\title{New Douglas-Rashford Splitting Algorithms for Generalized DC Programming with Applications in Machine Learning}
\date{\today}
\author{Yonghong Yao \footnote{School of Mathematical Sciences, Tiangong University, Tianjin 300387, China; and Center for Advanced Information Technology, Kyung Hee University, Seoul 02447, South Korea; e-mail: yyhtgu@hotmail.com}
\hspace*{0.8mm}, Lateef O. Jolaoso\footnote{School of Mathematical Sciences, University of Southampton, SO17 1BJ, United Kingdom;  e-mail: l.o.jolaoso@soton.ac.uk.}
\hspace*{0.8mm}, Yekini Shehu\footnote{(Corresponding Author) School of Mathematical Sciences, Zhejiang Normal University, Jinhua 321004, People’s Republic of China; e-mail: yekini.shehu@zjnu.edu.cn}
\hspace*{0.8mm}, Jen-Chih Yao\footnote{Center for General Education, China Medical University, Taichung 40402, Taiwan, Academy of Romanian Scientists, Bucharest, Romania; e-mail: yaojc@mail.cmu.edu.tw}}
\begin{document}
\maketitle

\begin{abstract}
\noindent
In this work, we propose some new Douglas-Rashford splitting algorithms for solving a class of generalized DC (difference of convex functions) in real Hilbert spaces. The proposed methods leverage the proximal properties of the nonsmooth component and a fasten control parameter which improves the convergence rate of the algorithms. We prove the convergence of these methods to the critical points of nonconvex optimization under reasonable conditions. We evaluate the performance and effectiveness of our methods through experimentation with three practical examples in machine learning. Our findings demonstrated that our methods offer efficiency in problem-solving and outperform state-of-the-art techniques like the DCA (DC Algorithm) and ADMM.\\

\noindent  {\bf Keywords:} Douglas-Rachford splitting algorithm; DC programming; Nonconvex optimization; Machine learning.\\

\noindent {\bf 2010 MSC classification:} 65K05, 90C26, 90C30.

\end{abstract}

\section{Introduction}\label{Sec:Intro}
 \noindent
Let us consider the following class of generalized DC programming in a real Hilbert space $H$:
\begin{equation}\label{dc1}
  \min_{x \in H} \{p(x)=f (x)+g(x)-h(x)\},
\end{equation}
where $f, g: H\rightarrow (-\infty,+\infty]$ are proper, convex, and lower semicontinuous (not necessarily smooth) functions, and $h: H\rightarrow (-\infty,+\infty]$ is a convex and smooth function. The DC programming \eqref{dc1} was first presented by Tao et al. \cite{Tao}, which has received attention due to its applications in image processing \cite{Tu}, compressed sensing \cite{Lou2}, statistics and machine learning \cite{Gaudioso,Le Thi,Li,Nouiehed}, dimensionality reduction \cite{Ding} and multiple-input-multiple-output (MIMO) \cite{Wu}. For example, in the application of DC programming \eqref{dc1} in machine learning, the function $f$ stands for a loss function that denotes the data fidelity and $g-h$ represents a regularizer that induces some expected structures in the solution \cite{Liu,Lou1}.  \\

\noindent
The DCA (DC algorithm) is one of the most prominent methods for solving DC programming \eqref{dc1}. The DCA linearizes the concave part of the DC programming \eqref{dc1} at the current iteration and obtains the next iteration via a convex subproblem.  Further studies on the DCA have been given in \cite{Gotoh,Lu,Wen}.  In \cite[Algorithm 1]{Chuang}, Chuang et al. introduced the following unified Douglas-Rachdford splitting algorithm to solve DC programming \eqref{dc1}:

\begin{eqnarray}\label{dc2}
\left\{\begin{array}[c]{ll}
	&y_n = \underset{v \in H}{\rm argmin} \Big\{f (v)+\frac{1}{2\beta}\|v-x_n \|^2  \Big\},\\
    &z_n = \underset{v \in H}{\rm argmin}\Big\{g(v)+\frac{1}{2\beta}\|v-(2y_n-x_n+\beta \nabla h(y_n)) \|^2  \Big\},\\
    &x_{n+1}=x_n+\kappa_n(z_n-y_n),
     \end{array}
      \right.
      \end{eqnarray}
with $\beta>0$ and $\kappa_n \in (0,2)$, and obtained, under certain conditions, that $\underset{n\rightarrow \infty} \lim \|y_n-x\|=0$, $\underset{n\rightarrow \infty} \lim x_n=y$, where $x$ is a stationary point of
DC programming \eqref{dc1} and ${\rm prox}_{\beta f} (y)=x$. In \cite{Bian}, Bian and Zhang extended the three-operator splitting algorithm of Davis-Yin \cite{Davis} from solving the optimization problem of the sum of three convex functions to solving nonconvex optimization problems. This three-operator in \cite{Davis} could be seen as a slight extension of the Douglas-Rachford splitting algorithm \cite{Douglas} and the generalized forward-backward splitting algorithm in \cite{Bauschkebook,Raguet}.\\

\noindent
Motivated by the forward-backward splitting algorithm with deviations proposed in \cite{Sadeghi}, Hu and Dong \cite{Hu} proposed the following weakly convergent three-operator splitting algorithm with deviations for solving DC programming \eqref{dc1}:

\begin{eqnarray}\label{dc3}
\left\{\begin{array}[c]{ll}
    &s_n= x_n+u_n-\beta Lv_n,\\
	&y_n=\underset{v \in H}{\rm argmin}\Big\{f (v)+\frac{1}{2\beta}\|v-s_n \|^2  \Big\},\\
     &t_n=y_n-\frac{\beta L}{2}v_n,\\
     &q_n= y_n+v_n,\\
    &z_n = \underset{v \in H}{\rm argmin}\Big\{g(v)+\frac{1}{2\beta}\|v-(2t_n-s_n+\beta \nabla h(q_n)) \|^2  \Big\},\\
    &x_{n+1}=x_n+\kappa_n(z_n-y_n),
     \end{array}
      \right.
      \end{eqnarray}
where the vectors $u_{n+1}$ and $v_{n+1}$ are chosen such that
\begin{equation}\label{dc4}
\frac{\kappa_{n+1}}{2-\kappa_{n+1}}\|u_{n+1}\|^2 +\kappa_{n+1}\beta L \|v_{n+1}\|^2\leq \zeta_n l_n^2
    \end{equation}
with
\begin{equation}\label{dc5}
l_n^2=\kappa_n(2-\kappa_n)\|z_n-y_n+\frac{1}{2-\kappa_n}u_n \|^2.
    \end{equation}
Consequently, both one-step and two-step inertial three-operator splitting algorithms are deduced in \cite[Algorithm 2, Algorithm 3]{Hu} from the deviation vector $u_n$. Numerical comparisons of  \cite[Algorithm 2, Algorithm 3]{Hu} with unified Douglas-Rachdford splitting algorithm \eqref{dc2} using DC regularized sparse recovery problems showed that \cite[Algorithm 2, Algorithm 3]{Hu} outperformed unified Douglas-Rachdford splitting algorithm \eqref{dc2} in terms of cpu time and number of iterations.\\

\noindent
\textbf{Contribution.}
\noindent Our aim in this paper is to continue the approach of  Chuang et al.\cite{Chuang} and Hu and Dong \cite{Hu} by designing a new splitting algorithm to solve DC programming \eqref{dc1} with the following contributions:

\begin{itemize}
  \item
we introduce new fast Douglas-Rachford splitting algorithms with the aim of solving DC programming \eqref{dc1} in Hilbert spaces, which are also extensions of the unified Douglas-Rachford splitting algorithm proposed in \eqref{dc2};
\item in our proposed algorithms, we relax the strict norm conditions \eqref{dc4} and \eqref{dc5} used in \cite{Hu} in proving the convergence of the methods to critical point of the nonconvex optimization problem;
\item we provide some numerical experiments and implement the proposed methods for solving real-life problems in machine learning. The results of the experiments indicate the accuracy and efficiency of the proposed methods over popular methods such as the DC algorithm and ADMM in the literature.
\end{itemize}

\noindent
\textbf{Organization.}
We structure the rest of the paper as follows: in Section \ref{Sec:Prelims}, we put in place some basic concepts and lemmas while in Section \ref{Sec:Method}, we introduce our proposed algorithms and weak convergence analysis. Section \ref{Sec:Numerics} deals with numerical experiments where we implement the proposed methods in three DC models in machine learning. Some final remarks and future considerations are given in Section \ref{Sec:Final}.

\section{Preliminaries}\label{Sec:Prelims}
\noindent
In the convergence analysis of this paper, we shall denote the weak convergence by the symbol "$\rightharpoonup$" and denote the strong convergence by the symbol "$\rightarrow$".\\

\noindent The following definitions and basic concepts can be found in \cite{Bauschkebook}.

\begin{dfn}
A mapping $T: H \to H$ is called
\begin{itemize}
	\item[(i)] nonexpansive if $\|Tx - Ty \| \leq \|x-y \|,$ for all $x,y \in H;$
	\item[(ii)] $L$-Lipschitz continuous if there exists $L>0$ such that $\|Tx - Ty\| \leq L\|x- y \|$ for all $x,y \in H.$
\end{itemize}
\end{dfn}
\noindent We define the domain of a function $f:H\rightarrow (-\infty,+\infty]$ to be ${\rm dom} f:=\{x\in H: f(x)<+\infty\}$ and say that $f$ is proper if ${\rm dom} f \neq \emptyset$.

\begin{dfn}
Suppose $f:H\rightarrow (-\infty,+\infty]$ is proper. The subdifferential of $f$ at $x$ is defined by
$$
\partial f (x):=\{u \in H:f(z) \geq f(x)+\langle z-x,u \rangle, \forall z \in H \}.
$$
 \noindent We say that $f$ is subdifferentiable at $x$ if $\partial f (x) \neq \emptyset$.  In this case, the elements of $\partial f (x)$ are termed the subgradients of $f$ at $x$.

\end{dfn}

\begin{dfn}
An operator $A:H \to 2^H$ is said to be monotone if for any $x,y \in H,$
\begin{equation}
\langle x - y, u - v \rangle \geq 0, \forall u \in Ax, v \in Ay.
\end{equation}
The Graph of $A$ is defined by $$Gr(A):= \{(x,u)\in H \times H : u \in Ax\}.$$ If $Gr(A)$ is not properly contained in the graph of any other monotone mapping, then $A$ is maximal monotone operator.  It is well-known that for each $x \in H$, and
$\lambda>0$, there is a unique $z \in H$ such that $x \in (I+\lambda A)z$. Furthermore, $A$ is $\rho$-strongly monotone (with $\rho>0$) if
\begin{equation*}
\langle x - y, u - v \rangle \geq \rho\|u-v \|^2, \forall u \in Ax, v \in Ay.
\end{equation*}
\end{dfn}

\begin{dfn}
\begin{enumerate}
  \item [(i)]
  A function $f$ is called convex if ${\rm dom} f$ is a convex set and if $\forall x,y \in {\rm dom} f, \delta \in [0,1]$, we have
$$
f(\delta x+(1-\delta)y) \leq \delta f(x)+(1-\delta)f(y).
$$
  \item [(ii)] A function $f$ is said to be $\rho$-strongly convex with $\rho>0$ if $f-\frac{\rho}{2}\|.\|^2$ is convex. Thus, $\forall x,y \in {\rm dom} f, \delta \in [0,1]$,
  $$
f(\delta x+(1-\delta)y) \leq \delta f(x)+(1-\delta)f(y)-\frac{\rho}{2}\theta(1-\theta)\|x-y\|^2.
  $$
\end{enumerate}
Moreover, if $f$ is convex, we have
$$
f(x) \geq f(y) +\langle u, x-y\rangle, x,y \in {\rm dom} f,
$$
with $u \in \partial f(y)$ is arbitrary. Also, if $f$ is $\rho$-strongly convex with $\rho>0$, we have
$$
f(x) \geq f(y) +\langle u, x-y\rangle+ \frac{\rho}{2}\|x-y\|^2, x,y \in {\rm dom} f,
$$
with $u \in \partial f(y)$ is arbitrary.
\end{dfn}

\begin{dfn}
Given $\beta>0$ and $f:H\rightarrow (-\infty,+\infty]$ a proper, lower semicontinuous and convex function. The proximal operator of $f$ with $\beta$ is defined by
$$
{\rm prox}_{\beta f}(x):=\underset{u \in H}{\rm argmin} \Big\{ f(u)+\frac{1}{2\beta}\|u-x\|^2 \Big\}
$$
\noindent for each $x \in H$.
\end{dfn}

\noindent
Using the first-order optimality condition of the minimization $\underset{u\in H}\min \Big\{ f(u)+\frac{1}{2\beta}\|u-x\|^2 \Big\}$, we have that
${\rm prox}_{\beta f}(x)=(I+\beta \partial f)^{-1}(x)$, where $I$ is the identity on $H$.

\begin{lem}\label{lem22}(\cite[Example 22.4]{Bauschkebook})
Suppose $\rho>0$ and $f: H\rightarrow (-\infty, +\infty]$ is a proper, lower-semicontinuous and convex function. If $f$ is $\rho$-strongly convex, then $\partial f$ is $\rho$-strongly monotone.
  \end{lem}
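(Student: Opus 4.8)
The plan is to read off the conclusion directly from the first-order characterization of strong convexity that is already recorded above, namely that since $f$ is $\rho$-strongly convex, for all $x,y\in{\rm dom}\,f$ and every subgradient $w\in\partial f(y)$ one has
\[
f(x)\ \ge\ f(y)+\langle w,\,x-y\rangle+\tfrac{\rho}{2}\|x-y\|^2 .
\]
The idea is to invoke this inequality twice, once based at $y$ and once based at $x$, and then add the two resulting estimates so that the function values cancel and the quadratic remainders reinforce one another.

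Concretely, I would fix $x,y\in H$ together with arbitrary subgradients $u\in\partial f(x)$ and $v\in\partial f(y)$. Applying the displayed inequality with base point $y$ and subgradient $v$ produces a lower bound for $f(x)$, while applying it with the roles of $x$ and $y$ interchanged (base point $x$, subgradient $u$) produces a lower bound for $f(y)$. Summing the two inequalities, the terms $f(x)$ and $f(y)$ cancel from both sides, the linear terms combine into $\langle v-u,\,x-y\rangle$, and the two quadratic terms add to $\rho\|x-y\|^2$. Rearranging then gives
\[
\langle x-y,\,u-v\rangle\ \ge\ \rho\|x-y\|^2 ,
\]
which is exactly the asserted $\rho$-strong monotonicity of $\partial f$ (with the modulus measured in $\|x-y\|^2$, in agreement with the cited Example~22.4).

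Since every ingredient is already in place, I do not expect a genuine obstacle; the only point that needs care is the sign bookkeeping when $x$ and $y$ are swapped in the second application, so that $\langle u,\,y-x\rangle$ recombines correctly with $\langle v,\,x-y\rangle$ to yield $\langle v-u,\,x-y\rangle$. As an alternative I would mention that one may instead set $g:=f-\tfrac{\rho}{2}\|\cdot\|^2$, which is convex by hypothesis, observe that $\partial f=\partial g+\rho I$ because the quadratic is smooth with gradient $\rho x$, and then add the plain monotonicity inequality for $\partial g$ to the identity $\langle \rho x-\rho y,\,x-y\rangle=\rho\|x-y\|^2$; this route delivers the same conclusion and makes transparent the decomposition of $\partial f$ into a monotone part and a strongly monotone linear part.
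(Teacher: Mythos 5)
Your proof is correct. There is nothing in the paper to compare it against: the paper does not prove this lemma at all, but simply cites \cite[Example~22.4]{Bauschkebook}, so your argument supplies a proof where the authors rely on a reference. The route you take --- apply the first-order strong-convexity inequality once at base point $y$ with $v\in\partial f(y)$ and once at base point $x$ with $u\in\partial f(x)$, then add so the function values cancel --- is the standard proof of the cited fact, and it is complete; the only implicit point is that strong monotonicity is a statement about pairs in $Gr(\partial f)$, so the subgradients you need exist by assumption, and such points automatically lie in ${\rm dom}\, f$ where the inequality is available. Your alternative route via $g:=f-\tfrac{\rho}{2}\|\cdot\|^2$ and $\partial f=\partial g+\rho I$ is equally valid and is essentially how the result is organized in the cited book. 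One discrepancy worth flagging: your conclusion $\langle x-y,\,u-v\rangle\geq\rho\|x-y\|^2$ is the correct statement and is exactly the form in which the lemma is invoked later in the paper, in inequality \eqref{ade3} inside the proof of Lemma \ref{LEMMA1}; however, the paper's formal definition of $\rho$-strong monotonicity in the preliminaries erroneously writes $\rho\|u-v\|^2$ on the right-hand side (that would be cocoercivity of the operator, not strong monotonicity). Your version, not the displayed definition, is the one consistent with both the cited source and the paper's own usage.
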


\begin{lem}\label{lem23} (\cite[Proposition 16.36]{Bauschkebook})
Suppose $f: H\rightarrow (-\infty, +\infty]$ is a proper, lower-semicontinuous and convex function. If $\{v_n\}$  and $\{x_n\}$ are sequences in $H$ with $(x_n,v_n)\in Gr(\partial f) $ for each $n\in \mathbb{N}$, $x_n\rightarrow x$ and $v_n\rightharpoonup v$, then $(x,v) \in Gr(\partial f)$.
  \end{lem}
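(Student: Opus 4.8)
The plan is to unwind the definition of the subdifferential and then pass to the limit in the resulting variational inequality, the only analytic inputs being the lower semicontinuity of $f$ and the convergence of a mixed strong--weak inner product.

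First I would fix an arbitrary $z \in H$ and use the hypothesis $v_n \in \partial f(x_n)$ to record the subgradient inequality
\begin{equation*}
f(z) \geq f(x_n) + \langle z - x_n, v_n \rangle, \qquad \forall n \in \mathbb{N}.
\end{equation*}
The target is the limiting inequality $f(z) \geq f(x) + \langle z - x, v\rangle$ for every $z \in H$, since this is exactly the assertion $v \in \partial f(x)$, i.e. $(x,v) \in Gr(\partial f)$.

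Second, I would control the two $n$-dependent terms on the right separately. For the inner product, observe that $z - x_n \to z - x$ strongly while $v_n \rightharpoonup v$ weakly; writing $\langle z - x_n, v_n\rangle - \langle z - x, v\rangle = \langle x - x_n, v_n\rangle + \langle z - x, v_n - v\rangle$, the first summand is bounded by $\|x - x_n\|\,\|v_n\|$ and tends to $0$ because $x_n \to x$ and $\{v_n\}$ is bounded (every weakly convergent sequence is bounded), while the second tends to $0$ by the definition of weak convergence. Hence $\langle z - x_n, v_n\rangle \to \langle z - x, v\rangle$. For the function values, the lower semicontinuity of $f$ together with $x_n \to x$ yields $\liminf_{n} f(x_n) \geq f(x)$.

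Finally I would take the $\liminf$ over $n$ in the recorded inequality. Since its left-hand side is the constant $f(z)$ and the inner-product term converges, I obtain
\begin{equation*}
f(z) \geq \liminf_{n}\big[\, f(x_n) + \langle z - x_n, v_n\rangle \,\big] = \liminf_{n} f(x_n) + \langle z - x, v\rangle \geq f(x) + \langle z - x, v\rangle.
\end{equation*}
As $z$ was arbitrary, this is precisely the subgradient inequality, so $v \in \partial f(x)$; here properness of $f$ is what guarantees $f(x) < +\infty$ (substitute any $z_0 \in {\rm dom} f$ into the displayed inequality). I do not anticipate a serious obstacle, as this is a routine limiting passage. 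The one point that genuinely requires care is the strong--weak pairing: it uses the \emph{strong} convergence $x_n \to x$ (mere weak convergence of $\{x_n\}$ would leave $\langle x_n, v_n\rangle$ uncontrolled) together with the boundedness of $\{v_n\}$ inherited from weak convergence, and one must split the limit of the sum correctly, which is legitimate precisely because the inner-product sequence converges rather than merely admitting a $\liminf$.
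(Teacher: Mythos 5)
Your proof is correct: the paper itself offers no argument for this lemma (it is quoted directly from \cite[Proposition 16.36]{Bauschkebook}), and your passage to the limit in the subgradient inequality --- using the strong--weak pairing $\langle z-x_n,v_n\rangle\to\langle z-x,v\rangle$ together with lower semicontinuity of $f$ --- is exactly the standard proof of that cited result. The points you flag (boundedness of $\{v_n\}$ from weak convergence, splitting the $\liminf$ because the inner-product term genuinely converges, and properness giving $f(x)<+\infty$) are precisely the details that need care, and you handle them correctly.
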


\begin{lem}\label{lm2}
If $x,y\in H$, we have
\begin{itemize}
\item[(i)]
 $2\langle x,y\rangle=\|x\|^{2}+\|y\|^{2}-\|x-y\|^{2}=\|x+y\|^{2}-\|x\|^{2}-\|y\|^{2}.$
\item[(ii)]
Let $x,y\in H$ and $a \in \mathbb{R}$. Then
\begin{eqnarray*}
\|(1-a)x+ay\|^2= (1-a)\|x\|^2+a\|y\|^2-(1-a)a\|x-y\|^2.
\end{eqnarray*}
\end{itemize}
\end{lem}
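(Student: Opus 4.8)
The plan is to establish both identities by direct expansion of the norms through the defining inner product, using only its bilinearity and symmetry. Part (i) is the classical polarization identity and I would prove it first, since part (ii) can then be reduced to it rather than expanded from scratch.

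For part (i), I would begin from $\|x-y\|^{2}=\langle x-y,\,x-y\rangle$ and expand by bilinearity together with the symmetry $\langle x,y\rangle=\langle y,x\rangle$, obtaining $\|x-y\|^{2}=\|x\|^{2}-2\langle x,y\rangle+\|y\|^{2}$; rearranging isolates $2\langle x,y\rangle=\|x\|^{2}+\|y\|^{2}-\|x-y\|^{2}$. The second equality follows identically by expanding $\|x+y\|^{2}=\langle x+y,\,x+y\rangle=\|x\|^{2}+2\langle x,y\rangle+\|y\|^{2}$ and rearranging. No inequalities, convexity, or completeness enter; the argument is purely algebraic.

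For part (ii), I would expand the left-hand side directly as
\[
\|(1-a)x+ay\|^{2}=(1-a)^{2}\|x\|^{2}+2a(1-a)\langle x,y\rangle+a^{2}\|y\|^{2},
\]
and then substitute $2\langle x,y\rangle=\|x\|^{2}+\|y\|^{2}-\|x-y\|^{2}$ from part (i) into the cross term. Collecting coefficients, the factor multiplying $\|x\|^{2}$ becomes $(1-a)^{2}+a(1-a)=(1-a)$, the factor multiplying $\|y\|^{2}$ becomes $a^{2}+a(1-a)=a$, and the $\|x-y\|^{2}$ term inherits the coefficient $-a(1-a)$. This reproduces exactly $(1-a)\|x\|^{2}+a\|y\|^{2}-(1-a)a\|x-y\|^{2}$, as claimed.

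There is essentially no genuine obstacle: both parts rest solely on the algebraic structure of the inner product, and the only care required is the coefficient bookkeeping in part (ii). The one point worth flagging is organizational rather than mathematical, namely that part (ii) should be derived as a consequence of part (i) so that the cross term is eliminated cleanly and the proof stays short.
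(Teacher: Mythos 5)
Your proof is correct, and it is the standard argument for these identities: the paper states Lemma \ref{lm2} without proof, treating both parts as well-known facts, and your expansion of the inner product (polarization for (i), then coefficient bookkeeping for (ii)) is exactly how they are established. Nothing is missing; the algebra in part (ii) checks out, with $(1-a)^2+a(1-a)=1-a$ and $a^2+a(1-a)=a$ as you state.
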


\begin{lem}\label{lem:w-converge} (\cite{Opial})
Let $K$ be a nonempty subset of a Hilbert space $H$ and let $\{x_n\}$ be a bounded sequence in $H$.
Assume the following two conditions are satisfied:
\begin{itemize}
\item $\lim_{n\to\infty}\|x_n-x\|$ exists for each $x\in K$,
\item every weak cluster point of $\{x_n\}$ belongs to $K$.
\end{itemize}
Then $\{x_n\}$ converges weakly to a point in $K$.
\end{lem}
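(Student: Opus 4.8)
The plan is to establish the result in three stages: first, existence of at least one weak cluster point; second, uniqueness of that cluster point; and third, the deduction that the whole sequence converges weakly to it. Since $\{x_n\}$ is bounded in the Hilbert space $H$, closed balls are weakly sequentially compact, so $\{x_n\}$ admits at least one weakly convergent subsequence and hence at least one weak cluster point. By the second hypothesis, every such cluster point lies in $K$, so $K$ contains at least one weak cluster point of $\{x_n\}$; call one of them $p$.

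The crux of the argument is uniqueness, and this is where the first hypothesis does the essential work. Suppose $p,q\in K$ are both weak cluster points of $\{x_n\}$; by hypothesis the limits $\lim_{n}\|x_n-p\|$ and $\lim_{n}\|x_n-q\|$ both exist. Expanding the squared norms (using the identity of Lemma \ref{lm2}(i)), one writes
$$\|x_n-p\|^2-\|x_n-q\|^2=2\langle x_n,q-p\rangle+\|p\|^2-\|q\|^2,$$
and since the left-hand side converges as $n\to\infty$, the scalar sequence $\langle x_n,q-p\rangle$ must converge as well. Choosing a subsequence $x_{n_k}\rightharpoonup p$ and a subsequence $x_{m_j}\rightharpoonup q$, and passing to the limit in $\langle x_n,q-p\rangle$ along each, convergence of the full scalar sequence forces the two subsequential limits to coincide, i.e. $\langle p,q-p\rangle=\langle q,q-p\rangle$. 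This gives $\|q-p\|^2=0$, hence $p=q$, so the weak cluster point is unique.

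Finally, I would upgrade uniqueness of the cluster point to weak convergence of the entire sequence by a standard subsequence argument. If $\{x_n\}$ did not converge weakly to $p$, there would exist $y\in H$, $\varepsilon>0$, and a subsequence with $|\langle x_{n_k}-p,y\rangle|\geq\varepsilon$ for all $k$; being bounded, this subsequence would possess a further weakly convergent subsequence, whose limit is again a weak cluster point of $\{x_n\}$ and hence equal to $p$ by uniqueness. That limit would satisfy $\langle x-p,y\rangle=0$, contradicting the lower bound $\varepsilon$. Therefore $x_n\rightharpoonup p\in K$.

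I expect the main obstacle to be the uniqueness step, specifically the observation that the mere existence of the distance limits (not their values) already forces $\langle x_n,q-p\rangle$ to converge, after which comparing the two subsequential limits yields $p=q$. The existence and final weak-convergence steps are routine consequences of boundedness together with weak sequential compactness of balls in a Hilbert space.
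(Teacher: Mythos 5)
Your proof is correct: the three-stage argument (weak sequential compactness of bounded sets gives a cluster point, the identity $\|x_n-p\|^2-\|x_n-q\|^2=2\langle x_n,q-p\rangle+\|p\|^2-\|q\|^2$ forces uniqueness since convergence of the left side along two weakly convergent subsequences yields $\|q-p\|^2=0$, and a subsequence argument upgrades the unique cluster point to weak convergence of the whole sequence) is exactly the standard proof of this result. The paper itself offers no proof---it states the lemma with a citation to Opial---so there is nothing to compare against beyond noting that your argument is the classical one from that reference.
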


\begin{dfn}
  We say that $x$ is a stationary point of $p$ in DC programming \eqref{dc1} if
  $$
  0\in \partial f(x)+\partial g(x)-\nabla h(x).
  $$
\end{dfn}
\noindent
We shall denote by $\Omega$, the set of all the stationary points of $p$ in DC programming \eqref{dc1} and consequently assume that $\Omega \neq \emptyset$ throughout this paper.\\

\noindent In DC programming \eqref{dc1}, assume for the rest of this paper that $\beta>0$ is given, $f$ and $g$ are $\rho$-strongly convex, and $h$ is a smooth convex function having a Lipschitz continuous gradient with constant $L>0$. The following results have been established in \cite[Proposition 3.1]{Hu} regarding the set of stationary points of DC programming \eqref{dc1}.

\begin{lem} \label{DC1}
If $2\rho>L$  in DC programming \eqref{dc1}, then the stationary point of $p$ is unique.
\end{lem}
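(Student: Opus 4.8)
The plan is to establish uniqueness through a standard strong-monotonicity estimate. Since existence is already assumed via $\Omega \neq \emptyset$, it suffices to suppose $x_1, x_2 \in \Omega$ are two stationary points of $p$ in DC programming \eqref{dc1} and to show that the hypothesis $2\rho > L$ forces $\|x_1 - x_2\| = 0$.

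First I would unpack the stationarity condition. For each $i \in \{1,2\}$, the inclusion $0 \in \partial f(x_i) + \partial g(x_i) - \nabla h(x_i)$ provides subgradients $u_i \in \partial f(x_i)$ and $v_i \in \partial g(x_i)$ with $u_i + v_i = \nabla h(x_i)$. Subtracting the two identities gives $(u_1 - u_2) + (v_1 - v_2) = \nabla h(x_1) - \nabla h(x_2)$, and I would pair both sides with $x_1 - x_2$ to obtain a single scalar identity amenable to one-sided estimates.

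Next I would bound the two sides in opposite directions. Because $f$ and $g$ are $\rho$-strongly convex, Lemma \ref{lem22} gives that $\partial f$ and $\partial g$ are each $\rho$-strongly monotone, so $\langle u_1 - u_2, x_1 - x_2\rangle \geq \rho\|x_1 - x_2\|^2$ and $\langle v_1 - v_2, x_1 - x_2\rangle \geq \rho\|x_1 - x_2\|^2$; adding these, the left-hand side is at least $2\rho\|x_1 - x_2\|^2$. On the right, the Cauchy--Schwarz inequality together with the $L$-Lipschitz continuity of $\nabla h$ yields $\langle \nabla h(x_1) - \nabla h(x_2), x_1 - x_2\rangle \leq L\|x_1 - x_2\|^2$. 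Combining the two bounds gives $(2\rho - L)\|x_1 - x_2\|^2 \leq 0$, and since $2\rho - L > 0$ by hypothesis, we conclude $x_1 = x_2$.

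The argument is essentially routine, so I do not expect a serious obstacle; the only point requiring care is the correct normalization of the strong monotonicity extracted from strong convexity — namely that the constant is $\rho$ rather than $\rho/2$. This can be confirmed directly by adding the two strong-convexity inequalities $f(x_i) \geq f(x_j) + \langle u_j, x_i - x_j\rangle + \frac{\rho}{2}\|x_i - x_j\|^2$ rather than merely quoting Lemma \ref{lem22}. It is also worth noting that this is an a priori uniqueness claim in which no proximal or splitting structure is used: only the three standing monotonicity and Lipschitz hypotheses on $f$, $g$, and $h$ enter the estimate.
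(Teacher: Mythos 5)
Your proof is correct. Note that the paper itself gives no proof of this lemma: it is stated as imported from \cite[Proposition 3.1]{Hu}, so your argument serves as a self-contained verification, and it is exactly the standard route one would expect that reference to take --- pair the difference of the two stationarity inclusions with $x_1-x_2$, bound below by $2\rho\|x_1-x_2\|^2$ via strong monotonicity of $\partial f$ and $\partial g$, and above by $L\|x_1-x_2\|^2$ via Cauchy--Schwarz and the Lipschitz property of $\nabla h$. One caveat worth flagging: the paper's printed definition of $\rho$-strong monotonicity erroneously has $\rho\|u-v\|^2$ on the right-hand side (a typo), but your usage with $\rho\|x_1-x_2\|^2$ is the correct one and matches how the paper actually applies Lemma \ref{lem22} in the proof of Lemma \ref{LEMMA1}; your direct confirmation that the constant is $\rho$ rather than $\rho/2$ is also accurate.
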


\begin{lem}\label{DC2}(\cite[Proposition 3.1]{Chuang})
Suppose $\beta>0$. Then $x \in \Omega$ if and only if there exists $y \in H$ such that

\begin{eqnarray*}
\left\{\begin{array}[c]{ll}
	x&={\rm prox}_{\beta f} (y),\\
	x&= {\rm prox}_{\beta f} (2x-y+\beta \nabla h(x)).
     \end{array}
      \right.
\end{eqnarray*}
\end{lem}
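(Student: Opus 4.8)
The plan is to reduce both proximal equations to subdifferential inclusions through the resolvent identity ${\rm prox}_{\beta f}=(I+\beta\partial f)^{-1}$ (and its analogue for $g$) recorded just after the definition of the proximal operator, and then to show that the two inclusions, taken jointly, are equivalent to the single inclusion $0\in\partial f(x)+\partial g(x)-\nabla h(x)$ that defines membership in $\Omega$. Since $f$ and $g$ are proper, lower semicontinuous and convex, each proximal operator is single-valued and coincides with the corresponding resolvent, so the $\rho$-strong convexity and the Lipschitz hypotheses play no role here; the whole argument stays at the level of first-order optimality conditions. Consistently with the second subproblem of \eqref{dc2}, I read the second proximal operator as being taken with respect to $g$.

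For the forward implication I would assume $x\in\Omega$, so that there exist $u\in\partial f(x)$ and $w\in\partial g(x)$ with $u+w-\nabla h(x)=0$, and then choose $y:=x+\beta u$. The relation $y-x=\beta u\in\beta\partial f(x)$ gives $y\in(I+\beta\partial f)(x)$, i.e. $x={\rm prox}_{\beta f}(y)$. For the second equation I would substitute this $y$ and use $w=\nabla h(x)-u$:
\begin{equation*}
2x-y+\beta\nabla h(x)=x+\beta(\nabla h(x)-u)=x+\beta w\in x+\beta\partial g(x),
\end{equation*}
which is exactly $x={\rm prox}_{\beta g}(2x-y+\beta\nabla h(x))$.

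For the converse I would start from the two proximal equations, rewrite them as the resolvent inclusions $\tfrac{1}{\beta}(y-x)\in\partial f(x)$ and $\tfrac{1}{\beta}(x-y)+\nabla h(x)\in\partial g(x)$, and add them. The $y$-dependent terms cancel, leaving $\nabla h(x)\in\partial f(x)+\partial g(x)$, which is precisely $0\in\partial f(x)+\partial g(x)-\nabla h(x)$, so $x\in\Omega$.

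The computations are short, so the only real decision is the construction of $y$ in the forward direction. The feature that makes everything fit is the cancellation of the $y$-terms when the two inclusions are summed; this is exactly what forces the reflected combination $2x-y$ together with the correction $+\beta\nabla h(x)$ to appear inside the second proximal argument, so identifying that cancellation is where I would focus the verification.
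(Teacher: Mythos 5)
Your proof is correct; the paper itself gives no proof of this lemma (it is quoted from Chuang et al.), but your resolvent-inclusion argument is exactly the one the paper implicitly relies on whenever it invokes the lemma: the inclusions $\frac{1}{\beta}(\bar{y}-\bar{x})\in\partial f(\bar{x})$ and $\frac{1}{\beta}(\bar{x}-\bar{y})+\nabla h(\bar{x})\in\partial g(\bar{x})$ in \eqref{ade2a}--\eqref{ade2b} are precisely your two conditions, and your forward-direction choice $y=x+\beta u$ with the cancellation upon summing is the standard (and the intended) argument. You were also right to read the second operator as ${\rm prox}_{\beta g}$: the ``$f$'' in the lemma's displayed equation is a typo, as confirmed by the algorithm \eqref{dc2} and by the proof of Theorem \ref{thm1}, where the paper writes ${\rm prox}_{\beta g}(2x-v+\beta\nabla h(x))=x$.
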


\noindent
In order to obtain our convergence analysis of the proposed Algorithm \ref{alg2}, we give the following conditions.

\begin{asm}\label{ass1} Suppose the iterative parameters in Algorithm \ref{alg2} fulfill these conditions:
 \begin{enumerate}
   \item [(i)] $2\rho >L$;
   \item [(ii)] $0<a\leq \kappa_n \leq b<2$.
 \end{enumerate}

\end{asm}

\noindent Going by Lemma \ref{DC2}, given $\beta>0$, we set
$$
\Gamma_\beta:=\{ y: {\rm prox}_{\beta f} (y)=x, {\rm prox}_{\beta f} (2x-y+\beta \nabla h(x))=x, x \in \Omega\}.
$$

\section{Proposed Algorithm and Its Convergence} \label{Sec:Method}
\noindent In this section, we introduce the proposed algorithms and obtain associated weak convergence results to the critical point of DC programming \eqref{dc1}.

\begin{algorithm}[H]
\caption{First Version of Douglas-Rachford Algorithm}\label{alg2}
\begin{algorithmic}[1]
\State {\bf Initialization:} Choose  $\beta >0, \theta \geq 0$, and $x_0, v_0 \in H$ arbitrarily. Set $n=0.$

\State {\bf Iteration Step:} Given the iterates $x_n, v_n$, compute
        \begin{eqnarray}\label{yi1}
\left\{\begin{array}[c]{ll}
	&u_n= \frac{1}{1+\theta}x_n+\frac{\theta}{1+\theta}v_n,\\
	&y_n = \underset{v \in H}{\rm argmin}\Big\{f (v)+\frac{1}{2\beta}\|v-u_n \|^2  \Big\},\\
    &z_n = \underset{v \in H}{\rm argmin}\Big\{g(v)+\frac{1}{2\beta}\|v-(2y_n-u_n+\beta \nabla h(y_n)) \|^2  \Big\},\\
    &x_{n+1}=u_n+\kappa_n(z_n-y_n),\\
    &v_{n+1}=\frac{1}{1+\theta}x_{n+1}+\frac{\theta}{1+\theta}v_n.
     \end{array}
      \right.
      \end{eqnarray}
	\noindent Set $n\leftarrow n+1,$ and go to {\bf Iteration Step} if the stopping criterion is not met.
\end{algorithmic}
\end{algorithm}

\begin{rem}\label{remark1}
If $\theta=0$ in Algorithm \ref{alg2}, then $u_n=x_n$ and $v_{n+1}=x_{n+1}$. Consequently, we obtain  \cite[Algorithm 1]{Chuang}. Therefore, \cite[Algorithm 1]{Chuang} is recovered from Algorithm \ref{alg2} when $\theta=0$. It suffices to obtain the weak convergence of Algorithm \ref{alg2} when $\theta>0$ in Theorem \ref{thm2aa} below.
\end{rem}

\noindent We can easily obtain this lemma by following same arguments as in \cite[Theorem 3.1]{Chuang}. For the sake of completeness and to make our results reader-friendly, we include the complete proof.

\begin{lem}\label{LEMMA1}
Suppose $\beta, \kappa>0$, for each $u \in H$, let
        \begin{eqnarray}\label{adidi}
\left\{\begin{array}[c]{ll}
	&y = \underset{v \in H}{\rm argmin}\Big\{f (v)+\frac{1}{2\beta}\|v-u\|^2  \Big\},\\
    &z = \underset{v \in H}{\rm argmin}\Big\{g(v)+\frac{1}{2\beta}\|v-(2y-u+\beta \nabla h(y)) \|^2  \Big\},\\
    &x=u+\kappa(z-y).
     \end{array}
      \right.
      \end{eqnarray}
Then for any $\bar{x} \in \Omega$, there exists  $\bar{y}\in \Gamma_\beta$ such that
\begin{eqnarray}\label{ade11}
\|x-\bar{y}\|^2 &\leq& \|u-\bar{y}\|^2-\beta\kappa(2\rho-L)\Big(\|y-\bar{x}\|^2+\|z-\bar{x}\|^2\Big)\nonumber \\
&&-\kappa(2-\kappa)\|z-y\|^2.
  \end{eqnarray}
\end{lem}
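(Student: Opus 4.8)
The plan is to convert the three proximal steps in \eqref{adidi} into subgradient inclusions, pair them against the fixed-point relations defining $\Gamma_\beta$, and then drive everything through the strong monotonicity of $\partial f,\partial g$ (Lemma \ref{lem22}) and the $L$-Lipschitz bound on $\nabla h$. First I would record the optimality conditions. From $y={\rm prox}_{\beta f}(u)$ we get $\tfrac{1}{\beta}(u-y)\in\partial f(y)$; from the definition of $z$ we get $\tfrac{1}{\beta}(2y-u-z)+\nabla h(y)\in\partial g(z)$. On the fixed-point side, the relations $\bar x={\rm prox}_{\beta f}(\bar y)$ and $\bar x={\rm prox}_{\beta g}(2\bar x-\bar y+\beta\nabla h(\bar x))$ characterizing $\bar y\in\Gamma_\beta$ (Lemma \ref{DC2}) yield $\tfrac{1}{\beta}(\bar y-\bar x)\in\partial f(\bar x)$ and $\tfrac{1}{\beta}(\bar x-\bar y)+\nabla h(\bar x)\in\partial g(\bar x)$.

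Since $f,g$ are $\rho$-strongly convex, $\partial f,\partial g$ are $\rho$-strongly monotone by Lemma \ref{lem22}. Pairing the two $f$-inclusions at $y$ and $\bar x$ and simplifying gives
\[
\langle y-\bar x,\,u-\bar y\rangle\ \geq\ (1+\beta\rho)\|y-\bar x\|^2 ,
\]
while pairing the two $g$-inclusions at $z$ and $\bar x$, after substituting the displacement $2(y-\bar x)-(u-\bar y)-(z-\bar x)$ for the difference of the $g$-arguments, gives
\[
\langle z-\bar x,\,u-\bar y\rangle\ \leq\ -(1+\beta\rho)\|z-\bar x\|^2+2\langle z-\bar x,\,y-\bar x\rangle+\beta\langle z-\bar x,\,\nabla h(y)-\nabla h(\bar x)\rangle .
\]

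Next I would expand $\|x-\bar y\|^2=\|u-\bar y\|^2+2\kappa\langle u-\bar y,\,z-y\rangle+\kappa^2\|z-y\|^2$ using $x=u+\kappa(z-y)$. A short rearrangement shows that the target estimate \eqref{ade11} is equivalent (upon dividing by $\kappa>0$ and combining the $\kappa\|z-y\|^2$ and $(2-\kappa)\|z-y\|^2$ terms into $2\|z-y\|^2$) to the single inequality
\[
2\langle u-\bar y,\,z-y\rangle+2\|z-y\|^2\ \leq\ -\beta(2\rho-L)\big(\|y-\bar x\|^2+\|z-\bar x\|^2\big).
\]
Writing $z-y=(z-\bar x)-(y-\bar x)$, I would substitute the two monotonicity inequalities above into the left-hand side and expand $2\|z-y\|^2=2\|z-\bar x\|^2-4\langle z-\bar x,\,y-\bar x\rangle+2\|y-\bar x\|^2$. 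The crucial cancellation is that the cross terms $\pm 4\langle z-\bar x,\,y-\bar x\rangle$ vanish and the coefficients $-2(1+\beta\rho)+2$ collapse to $-2\beta\rho$, leaving exactly
\[
-2\beta\rho\big(\|y-\bar x\|^2+\|z-\bar x\|^2\big)+2\beta\langle z-\bar x,\,\nabla h(y)-\nabla h(\bar x)\rangle .
\]

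The final step absorbs the gradient term: by $L$-Lipschitz continuity of $\nabla h$ together with Cauchy--Schwarz and Young's inequality,
\[
2\beta\langle z-\bar x,\,\nabla h(y)-\nabla h(\bar x)\rangle\ \leq\ 2\beta L\|z-\bar x\|\,\|y-\bar x\|\ \leq\ \beta L\big(\|y-\bar x\|^2+\|z-\bar x\|^2\big),
\]
which turns $-2\beta\rho$ into the desired $-\beta(2\rho-L)$ and closes the argument. Note that only $\kappa>0$ is used (the parameter drops out of the reduction), and the inequality \eqref{ade11} holds as stated even without $2\rho>L$; that hypothesis (Assumption \ref{ass1}(i)) only serves to make the right-hand side nonpositive for the later Fej\'er-type analysis. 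The main obstacle I anticipate is purely organizational: getting the two strong-monotonicity inequalities oriented so their cross terms cancel against the expansion of $2\|z-y\|^2$, and correctly identifying the displacement $2(y-\bar x)-(u-\bar y)-(z-\bar x)$ inside the $g$-inclusion; once that bookkeeping is right, the Lipschitz/Young estimate at the end is routine.
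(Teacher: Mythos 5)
Your proof is correct: I checked the two strong-monotonicity pairings, the reduction after dividing out $\kappa$, the cross-term cancellation, and the final Lipschitz/Young absorption, and each step holds. It uses the same ingredients as the paper's proof (the prox optimality inclusions, Lemma \ref{lem22} applied to $\partial f$ and $\partial g$ against the $\Gamma_\beta$-inclusions coming from Lemma \ref{DC2}, and the $L$-Lipschitz bound on $\nabla h$ via Cauchy--Schwarz and Young), but the algebra follows a genuinely different and shorter path. The paper never divides out $\kappa$: it instead establishes the contraction-type estimate $\|2z-2y+u-\bar y\|^2 \le \|u-\bar y\|^2 - 2\beta(2\rho-L)\big(\|y-\bar x\|^2+\|z-\bar x\|^2\big)$ for the reflected vector through the chain of intermediate estimates \eqref{ade6}--\eqref{ade10}, in which the $g$-monotonicity inequality and the Lipschitz bound are each invoked twice, and then concludes by writing $x-\bar y = \big(1-\tfrac{\kappa}{2}\big)(u-\bar y) + \tfrac{\kappa}{2}(2z-2y+u-\bar y)$ and applying the averaging identity of Lemma \ref{lm2}(ii). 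Your reduced inequality $2\langle u-\bar y, z-y\rangle + 2\|z-y\|^2 \le -\beta(2\rho-L)\big(\|y-\bar x\|^2+\|z-\bar x\|^2\big)$ is in fact exactly the paper's reflected-vector estimate with the square expanded, so the two arguments meet at the same pivot; the difference is that you reach it in a single pass (each monotonicity inequality and the Lipschitz bound used once, with the $\pm 4\langle z-\bar x, y-\bar x\rangle$ terms cancelling), whereas the paper's longer detour makes the Douglas--Rachford structure explicit --- quasi-nonexpansiveness of the reflection step followed by averaging --- at the cost of heavier bookkeeping. Your closing observations are also accurate: $\kappa>0$ enters only to be divided out, and $2\rho>L$ is not needed for \eqref{ade11} itself (the paper's proof does not use it either; it only makes the right-hand side a genuine decrease in the subsequent Fej\'er-type analysis).
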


\begin{proof}
Using the first-order optimality condition on $y={\rm prox}_{\beta f}(u)$ and $z={\rm prox}_{\beta g}(2y-u+\beta \nabla h(y))$, we obtain

\begin{equation}\label{ade1a}
\frac{1}{\beta}(u-y) \in \partial f(y),
\end{equation}
and
\begin{equation}\label{ade1b}
\nabla h(y)+\frac{1}{\beta}(2y-z-u) \in \partial g(z)
\end{equation}
respectively. Now for any $\bar{x} \in \Omega$, there exists a $\bar{y}\in \Gamma_\beta$ such that
\begin{eqnarray}\label{ade2a}
\frac{1}{\beta}(\bar{y}-\bar{x}) \in \partial f (\bar{x})
\end{eqnarray}
and
\begin{equation}\label{ade2b}
{\rm and}~~\frac{1}{\beta}(\bar{x}-\bar{y}) +\nabla h(\bar{x}) \in \partial g(\bar{x}).
\end{equation}
\noindent
By observing that $f$ is $\rho$-strongly convex, we have from Lemma \ref{lem22} that $\partial f$ is $\rho$-strongly monotone. Consequently,

\begin{equation}\label{ade3}
  \langle \zeta_{y}-\zeta_{\bar{x}},y-\bar{x} \rangle \geq \rho \|y-\bar{x}\|^2, ~~\forall \zeta_{y} \in \partial f(y), \zeta_{\bar{x}} \in \partial f(\bar{x}).
\end{equation}
If we apply \eqref{ade3} in \eqref{ade1a} and \eqref{ade2a}, we obtain

\begin{equation*}
\rho \|y-\bar{x}\|^2 \leq \frac{1}{\beta} \langle y-\bar{x},u-y-\bar{y}+\bar{x} \rangle
  \end{equation*}
which implies that
\begin{equation}\label{ade4}
  (2+2\beta \rho) \|y-\bar{x}\|^2 \leq 2\langle y-\bar{x},u-\bar{y} \rangle.
\end{equation}
Similarly, let us apply the property that $g$ is $\rho$-strongly convex in \eqref{ade1b} and \eqref{ade2b}. Then we obtain
\begin{eqnarray}\label{ade5}
\rho \|z-\bar{x}\|^2 \leq \langle z-\bar{x},\nabla h(y)-\nabla h(\bar{x})+\frac{1}{\beta}(2y-z-u)-\frac{1}{\beta}(\bar{x}-\bar{y})\rangle.
  \end{eqnarray}
If we rearrange \eqref{ade5}, noting the Cauchy-Schwarz inequality and Lipschitz continuity of $\nabla h$, we get
\begin{eqnarray}\label{ade6}
2\beta\rho \|z-\bar{x}\|^2 &\leq& 2\beta\langle z-\bar{x},\nabla h(y)-\nabla h(\bar{x})+\frac{1}{\beta}(2y-z-u)-\frac{1}{\beta}(\bar{x}-\bar{y})\rangle \nonumber\\
&=&2\beta\langle z-\bar{x},\nabla h(y)-\nabla h(\bar{x})\rangle + 2\langle z-\bar{x},2y-z-u-\bar{x}\rangle \nonumber\\
&\leq & 2\beta L\|z-\bar{x}\|\|y-\bar{x}\|+2\langle z-\bar{x},2y-z-u-\bar{x}\rangle \nonumber\\
&\leq & \beta L\|z-\bar{x}\|^2+\beta L\|y-\bar{x}\|^2+\|z-\bar{x}\|^2\nonumber \\
&&+\|2y-z-u+\bar{y}-\bar{x}\|^2-\|2z-2y+u-\bar{y}\|^2.
  \end{eqnarray}
Therefore, we have from \eqref{ade6} that
\begin{eqnarray}\label{ade7}
\|2z-2y+u-\bar{y}\|^2 &\leq& (1+\beta L-2\beta\rho)\|z-\bar{x}\|^2 +\beta L\|y-\bar{x}\|^2\nonumber \\
&&+\|2y-z-u+\bar{y}-\bar{x}\|^2.
  \end{eqnarray}
Now, consider (noting \eqref{ade4})

\begin{eqnarray}\label{ade8}
\|2y-z-u+\bar{y}-\bar{x}\|^2 &=& \|(y-z)+(y-\bar{x})-(u-\bar{y})\|^2\nonumber \\
&=&\|y-z\|^2+\|y-\bar{x}\|^2+\|u-\bar{y}\|^2+2\langle y-\bar{x}, y-z\rangle \nonumber \\
&&-2\langle u-y,y-z \rangle-2\langle u-\bar{y},y-\bar{x} \rangle \nonumber \\
&=& 2\|y-z\|^2+2\|y-\bar{x}\|^2+\|u-\bar{y}\|^2-\|z-\bar{x}\|^2\nonumber \\
&&-2 \langle u-\bar{y},y-z \rangle-2\langle u-\bar{y},y-\bar{x} \rangle \nonumber \\
&\leq& 2\|y-z\|^2+2\|y-\bar{x}\|^2+\|u-\bar{y}\|^2-\|z-\bar{x}\|^2\nonumber \\
&&-2 \langle u-\bar{y},y-z \rangle-(2+2\beta\rho)\|y-\bar{x}\|^2\nonumber \\
&=&2\|y-z\|^2+\|u-\bar{y}\|^2-\|z-\bar{x}\|^2-2\beta\rho \|y-\bar{x}\|^2\nonumber \\
&&-2 \langle u-\bar{y},y-\bar{x} \rangle-2 \langle u-\bar{y},\bar{x}-z \rangle \nonumber \\
&\leq& 2\|y-z\|^2+\|u-\bar{y}\|^2-\|z-\bar{x}\|^2\nonumber \\
&&-(2+4\beta\rho) \|y-\bar{x}\|^2+2\langle u-\bar{y},z-\bar{x} \rangle.
  \end{eqnarray}
We next estimate the term $2\langle u-\bar{y},z-\bar{x} \rangle$ in \eqref{ade8}. Now, if we look at \eqref{ade5} and we rearrange, we obtain
\begin{eqnarray}\label{ade9}
2\langle u-\bar{y},z-\bar{x} \rangle
&\leq& 2\beta \langle z-\bar{x},\nabla h(y)-\nabla h(\bar{x}) \rangle +2\langle z-\bar{x},y-z\rangle \nonumber\\
&&+2\langle z-\bar{x},y-\bar{x}\rangle-2\beta\rho\|z-\bar{x}\|^2 \nonumber \\
&\leq& \beta L\|z-\bar{x}\|^2+\beta L\|y-\bar{x}\|^2+\|y-\bar{x}\|^2-\|z-y\|^2 \nonumber \\
&&-\|z-\bar{x}\|^2+\|z-\bar{x}\|^2+\|y-\bar{x}\|^2-\|z-y\|^2\nonumber \\
&&-2\beta\rho\|z-\bar{x}\|^2 \nonumber \\
&=&(\beta L-2\beta\rho)\|z-\bar{x}\|^2+(2+\beta L)\|y-\bar{x}\|^2\nonumber \\
&&-2\|z-y\|^2.
\end{eqnarray}
 Plugging \eqref{ade9} into \eqref{ade8}, we get
\begin{eqnarray}\label{ade10}
\|2y-z-u+\bar{y}-\bar{x}\|^2 &\leq& 2\|y-z\|^2+\|u-\bar{y}\|^2-\|z-\bar{x}\|^2\nonumber \\
&&-(2+4\beta\rho) \|y-\bar{x}\|^2+(\beta L-2\beta\rho)\|z-\bar{x}\|^2\nonumber \\
&&+(2+\beta L) \|y-\bar{x}\|^2\nonumber \\
&&-2\|z-y\|^2\nonumber \\
&=& \|u-\bar{y}\|^2+(\beta L-2\beta\rho-1)\|z-\bar{x}\|^2\nonumber \\
&&+(\beta L-4\beta\rho) \|y-\bar{x}\|^2.
   \end{eqnarray}
Now, let us plug \eqref{ade10} into \eqref{ade7} to obtain

\begin{eqnarray*}
\|2z-2y+u-\bar{y}\|^2 &\leq& (1+\beta L-2\beta\rho)\|z-\bar{x}\|^2 +\beta L\|y-\bar{x}\|^2\nonumber \\
&&+\|u-\bar{y}\|^2+(\beta L-2\beta\rho-1)\|z-\bar{x}\|^2\nonumber \\
&&+(\beta L-4\beta\rho) \|y-\bar{x}\|^2 \nonumber \\
&=&\|u-\bar{y}\|^2-2\beta(2\rho-L)\Big(\|y-\bar{x}\|^2+\|z-\bar{x}\|^2\Big).
  \end{eqnarray*}
We then obtain from $x=u+\kappa(z-y)$ that (noting immediate last inequality)
\begin{eqnarray*}
\|x-\bar{y}\|^2 &=& \|u+\kappa(z-y)-\bar{y}\|^2\nonumber \\
&=&\|\Big(1-\frac{\kappa}{2}\Big)(u-\bar{y})+\frac{\kappa}{2}(2z-2y+u-\bar{y})\|^2 \nonumber \\
&=&\Big(1-\frac{\kappa}{2}\Big)\|u-\bar{y}\|^2+\frac{\kappa}{2}\|2z-2y+u-\bar{y}\|^2 \nonumber \\
&&-\frac{\kappa}{2}\Big(1-\frac{\kappa}{2}\Big)\|(2z-2y+u-\bar{y})-(u-\bar{y})\|^2\nonumber \\
&=&\Big(1-\frac{\kappa}{2}\Big)\|u-\bar{y}\|^2+\frac{\kappa}{2}\|2z-2y+u-\bar{y}\|^2 \nonumber \\
&&-\kappa(2-\kappa)\|z-y\|^2\nonumber \\
&\leq& \Big(1-\frac{\kappa}{2}\Big)\|u-\bar{y}\|^2+\frac{\kappa}{2}\|u-\bar{y}\|^2\nonumber \\
&&-\beta \kappa(2\rho-L)\Big(\|y-\bar{x}\|^2+\|z-\bar{x}\|^2\Big)-\kappa(2-\kappa)\|z-y\|^2\nonumber \\
&=&\|u-\bar{y}\|^2-\beta \kappa(2\rho-L)\Big(\|y-\bar{x}\|^2+\|z-\bar{x}\|^2\Big)\nonumber \\
&&-\kappa(2-\kappa)\|z-y\|^2.
  \end{eqnarray*}

\end{proof}

\noindent We now give the following weak convergence result of Algorithm \ref{alg2} using Lemma \ref{LEMMA1}.

\begin{thm}\label{thm2aa}
The sequence $\{x_n\}$ generated by Algorithm \ref{alg2} converges weakly to a point in $\Gamma_\beta$ under Assumption \ref{ass1}.
\end{thm}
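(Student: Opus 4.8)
The plan is to verify the two hypotheses of Opial's Lemma (Lemma \ref{lem:w-converge}) for the sequence $\{x_n\}$ with $K=\Gamma_\beta$: that $\lim_{n}\|x_n-\bar y\|$ exists for every $\bar y\in\Gamma_\beta$, and that every weak cluster point of $\{x_n\}$ lies in $\Gamma_\beta$. Two preliminary observations make this feasible. First, by Assumption \ref{ass1}(i) and Lemma \ref{DC1} the stationary point $\bar x$ is unique, so $\Omega=\{\bar x\}$ and $\Gamma_\beta$ is described through the single point $\bar x$ by the inclusions $\frac1\beta(\bar y-\bar x)\in\partial f(\bar x)$ and $\frac1\beta(\bar x-\bar y)+\nabla h(\bar x)\in\partial g(\bar x)$. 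Second, the proof of Lemma \ref{LEMMA1} uses only these defining inclusions for $\bar y$ (see \eqref{ade2a}--\eqref{ade2b}), so the estimate \eqref{ade11} in fact holds for \emph{every} $\bar y\in\Gamma_\beta$, not merely for one; I will apply it with $u=u_n$, $y=y_n$, $z=z_n$, $x=x_{n+1}$ and $\kappa=\kappa_n$.

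The heart of the argument, and the step I expect to be the main obstacle, is converting \eqref{ade11} into a genuine quasi-Fej\'er inequality despite the inertial coupling between $x_n$ and $v_n$. Writing $\alpha:=\frac1{1+\theta}\in(0,1)$, both $u_n=\alpha x_n+(1-\alpha)v_n$ and $v_{n+1}=\alpha x_{n+1}+(1-\alpha)v_n$ are convex combinations with the \emph{same} weights and the same $v_n$, so Lemma \ref{lm2}(ii) gives exact expansions of $\|u_n-\bar y\|^2$ and $\|v_{n+1}-\bar y\|^2$. Substituting these together with \eqref{ade11} (which bounds $\|x_{n+1}-\bar y\|^2$ by $\|u_n-\bar y\|^2$ minus the nonnegative residual $D_n:=\beta\kappa_n(2\rho-L)(\|y_n-\bar x\|^2+\|z_n-\bar x\|^2)+\kappa_n(2-\kappa_n)\|z_n-y_n\|^2$), I would look for a combined energy $\Phi_n(\bar y)=A\|x_n-\bar y\|^2+B\|v_n-\bar y\|^2$ that decreases. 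Matching the coefficients of $\|x_n-\bar y\|^2$ and of $\|v_n-\bar y\|^2$ on the two sides forces the essentially unique choice $A=\alpha^2$, $B=1-\alpha$, and then yields
\[
\Phi_{n+1}(\bar y)\le \Phi_n(\bar y)-\alpha D_n-\alpha^2(1-\alpha)\|x_n-v_n\|^2-\alpha(1-\alpha)^2\|x_{n+1}-v_n\|^2,
\]
with all subtracted terms nonnegative. Finding this exact weighting is the delicate point; once it is in hand the rest is routine.

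From here $\{\Phi_n(\bar y)\}$ is nonincreasing and bounded below, so $\lim_n\Phi_n(\bar y)$ exists and $\{x_n\},\{v_n\}$ are bounded. Telescoping gives $\sum_n D_n<\infty$ and $\sum_n\|x_n-v_n\|^2<\infty$. Using $2\rho>L$ and $0<a\le\kappa_n\le b<2$ from Assumption \ref{ass1}, the summability of $D_n$ yields $y_n\to\bar x$ and $z_n\to\bar x$ strongly and $\|z_n-y_n\|\to0$, while $\|x_n-v_n\|\to0$ (hence also $x_n-u_n=(1-\alpha)(x_n-v_n)\to0$). Since $v_n-x_n\to0$ and $\{x_n\}$ is bounded, the two squared norms in $\Phi_n(\bar y)$ share the same limiting behaviour, so $\Phi_n(\bar y)=(1-\alpha+\alpha^2)\|x_n-\bar y\|^2+o(1)$; consequently $\lim_n\|x_n-\bar y\|$ exists for each $\bar y\in\Gamma_\beta$, which is the first Opial condition.

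For the second condition, let $x_{n_k}\rightharpoonup x^*$; since $x_n-u_n\to0$, also $u_{n_k}\rightharpoonup x^*$. The first-order optimality conditions of the two proximal steps read $\frac1\beta(u_n-y_n)\in\partial f(y_n)$ and $\nabla h(y_n)+\frac1\beta(2y_n-z_n-u_n)\in\partial g(z_n)$, exactly as in \eqref{ade1a}--\eqref{ade1b}. Along $n_k$, the base points $y_{n_k},z_{n_k}$ converge strongly to $\bar x$ while the corresponding subgradients converge weakly (using continuity of $\nabla h$) to $\frac1\beta(x^*-\bar x)$ and $\nabla h(\bar x)+\frac1\beta(\bar x-x^*)$, respectively. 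The demiclosedness of the subdifferential graph (Lemma \ref{lem23}) then gives $\frac1\beta(x^*-\bar x)\in\partial f(\bar x)$ and $\nabla h(\bar x)+\frac1\beta(\bar x-x^*)\in\partial g(\bar x)$, i.e. $x^*\in\Gamma_\beta$. Both hypotheses of Lemma \ref{lem:w-converge} now hold, so $\{x_n\}$ converges weakly to a point of $\Gamma_\beta$.
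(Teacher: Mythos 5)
Your proof is correct and takes essentially the same route as the paper: you apply Lemma \ref{LEMMA1} with an arbitrary $\bar y\in\Gamma_\beta$, expand the convex combinations via Lemma \ref{lm2}(ii), and your Lyapunov function $\Phi_n=\alpha^2\|x_n-\bar y\|^2+(1-\alpha)\|v_n-\bar y\|^2$ is exactly $\tfrac{1}{1+\theta}$ times the paper's $c_n$ (minus the redundant $\|x_n-v_{n-1}\|^2$ term the paper introduces by adding and subtracting), yielding the same Fej\'er-type inequality, the same vanishing residuals, the same identity showing $\lim_n\|x_n-\bar y\|$ exists, and the same conclusion by Lemma \ref{lem:w-converge}. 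The one substantive difference is in your favor: you explicitly verify the second Opial hypothesis (weak cluster points lie in $\Gamma_\beta$, via the optimality inclusions, continuity of $\nabla h$, and Lemma \ref{lem23}), a step the paper's proof of this theorem glosses over and carries out only in the proof of Theorem \ref{thm1}.
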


\begin{proof}
We shall only consider the case $\theta>0$ in our convergence analysis since the case $\theta=0$ has already been considered in
\cite[Algorithm 1]{Chuang} with convergence results given in \cite[Theorem 3.1]{Chuang}. Now by Lemma  \ref{lm2} (ii), we have from Algorithm \ref{alg2} that (putting $\bar{y}=y$ and $\bar{x}=x$  in Lemma \ref{LEMMA1})
\begin{eqnarray}\label{titi}
\|u_n-y\|^2&=& \big\|\frac{1}{1+\theta}(x_n-y)+\frac{\theta}{1+\theta}(v_n-y)\big\|^2\nonumber\\
&=&\frac{1}{1+\theta}\|x_n-y\|^2+\frac{\theta}{1+\theta}\|v_n-y\|^2 -\frac{\theta}{(1+\theta)^2}\|x_n-v_n\|^2\qquad
\end{eqnarray}
and
\begin{eqnarray}\label{titi2}
\|v_{n+1}-y\|^2&=& \frac{\theta}{1+\theta}\|v_n-y\|^2+\frac{1}{1+\theta}\|x_{n+1}-y\|^2 -\frac{\theta}{(1+\theta)^2}\|x_{n+1}-v_n\|^2.\qquad
\end{eqnarray}
If we plug \eqref{titi} into \eqref{ade11} in Lemma \ref{LEMMA1}, we have
\begin{eqnarray}\label{titi3}
\|x_{n+1}-y\|^2&\leq& \|u_n-y\|^2-\beta \kappa_n(2\rho-L)\Big(\|y_n-x\|^2+\|z_n-x\|^2\Big)\nonumber \\
&&-\kappa_n(2-\kappa_n)\|z_n-y_n\|^2\nonumber\\
&=&\frac{1}{1+\theta}\|x_n-y\|^2+\frac{\theta}{1+\theta}\|v_n-y\|^2 \nonumber \\
&&-\frac{\theta}{(1+\theta)^2}\|x_n-v_n\|^2-\beta \kappa_n(2\rho-L)\Big(\|y_n-x\|^2+\|z_n-x\|^2\Big)\nonumber \\
&&-\kappa_n(2-\kappa_n)\|z_n-y_n\|^2.
\end{eqnarray}
We then obtain from \eqref{titi2} and \eqref{titi3} that
\begin{eqnarray}\label{ihua}
\|x_{n+1}-y\|^2&+&\theta\|v_{n+1}-y\|^2\leq\frac{1}{1+\theta}\|x_n-y\|^2+\frac{\theta}{1+\theta}\|v_n-y\|^2\nonumber\\
&&-\frac{\theta}{(1+\theta)^2}\|x_n-v_n\|^2-\beta \kappa_n(2\rho-L)\Big(\|y_n-x\|^2+\|z_n-x\|^2\Big)\nonumber \\
&&-\kappa_n(2-\kappa_n)\|z_n-y_n\|^2+\frac{\theta}{1+\theta}\|x_{n+1}-y\|^2+ \frac{\theta^2}{1+\theta}\|v_n-y\|^2\nonumber\\
&&-\frac{\theta^2}{(1+\theta)^2}\|x_{n+1}-v_n\|^2.
\end{eqnarray}
Therefore,
\begin{eqnarray}\label{app4aa}
&&\frac{1}{1+\theta}\|x_{n+1}-y\|^2+\theta\|v_{n+1}-y\|^2+\frac{\theta^2}{(1+\theta)^2} \|x_{n+1}-v_n\|^2\nonumber\\
&\leq& \frac{1}{1+\theta}\|x_n-y\|^2+\theta\|v_n-y\|^2-\frac{\theta}{(1+\theta)^2}\|x_n-v_n\|^2\nonumber\\
&&-\beta \kappa_n(2\rho-L)\Big(\|y_n-x\|^2+\|z_n-x\|^2\Big)\nonumber \\
&&-\kappa_n(2-\kappa_n)\|z_n-y_n\|^2 +\frac{\theta^2}{(1+\theta)^2}\|x_n-v_{n-1}\|^2\nonumber \\
&&-\frac{\theta^2}{(1+\theta)^2}\|x_n-v_{n-1}\|^2\nonumber\\
&=&\frac{1}{1+\theta}\|x_n-y\|^2+\theta\|v_n-y\|^2+\frac{\theta^2}{(1+\theta)^2}\|x_n-v_{n-1}\|^2\nonumber\\
&&-\frac{\theta^2}{(1+\theta)^2}\|x_n-v_{n-1}\|^2-\beta \kappa_n(2\rho-L)\Big(\|y_n-x\|^2+\|z_n-x\|^2\Big)\nonumber \\
&&-\kappa_n(2-\kappa_n)\|z_n-y_n\|^2-\frac{\theta}{(1+\theta)^2}\|x_n-v_n\|^2.
\end{eqnarray}
Define
\begin{eqnarray}\label{app5aa}
c_n:&=&\frac{1}{1+\theta}\|x_n-y\|^2+\theta\|v_n-y\|^2\nonumber\\
&&+\frac{\theta^2}{(1+\theta)^2}\|x_n-v_{n-1}\|^2.
\end{eqnarray}
Then, \eqref{app4aa} becomes
\begin{eqnarray}\label{app7aa}
c_{n+1}&\leq&c_n-\frac{\theta^2}{(1+\theta)^2}\|x_n-v_{n-1}\|^2-\beta \kappa_n(2\rho-L)\Big(\|y_n-x\|^2+\|z_n-x\|^2\Big)\nonumber \\
&&-\kappa_n(2-\kappa_n)\|z_n-y_n\|^2-\frac{\theta}{(1+\theta)^2}\|x_n-v_n\|^2.
\end{eqnarray}
Thus, $\{c_n\}$ is monotone non-increasing and $\{c_n\}$ is convergent. Moreover, $\{c_n\}$ is bounded. Furthermore,  we obtain from \eqref{app7aa} that
$$
\underset{n\rightarrow \infty}\lim \|x_n-v_{n-1}\|=0, ~~\underset{n\rightarrow \infty}\lim \|x_n-v_n\|=0.
$$
\noindent Furthermore, we have from \eqref{app7aa} that
\begin{equation*}
 \beta \kappa_n(2\rho-L)\Big(\|y_n-x\|^2+\|z_n-x\|^2\Big)\leq c_n-c_{n+1},
\end{equation*}
which implies that
\begin{equation*}
\underset{n\rightarrow \infty}\lim  \beta \kappa_n(2\rho-L)\Big(\|y_n-x\|^2+\|z_n-x\|^2\Big)=0.
\end{equation*}
Since $2\rho>L$ and $0<a\leq \kappa_n \leq b<2$, we get
\begin{equation}\label{app7ab}
\underset{n\rightarrow \infty}\lim  \|y_n-x\|=0,~~{\rm and}~~\underset{n\rightarrow \infty}\lim\|z_n-x\|=0.
\end{equation}

\noindent Let us define
$$
s_n:=\|v_n-x_n\|^2+2\langle v_n-x_n,x_n-y\rangle
$$
\noindent and
$$
t_n:=-\frac{\theta}{(1+\theta)^2}\|x_n-v_{n-1}\|^2.
$$
\noindent Since $\{x_n\}$ is bounded and $\underset{n\rightarrow \infty}\lim \|x_n-v_n\|=0$, we have $
\underset{n\rightarrow \infty}\lim s_n=0$ and $t_n\rightarrow 0$, as $n\rightarrow \infty$.
Noting that
\begin{eqnarray*}
\|v_n-y\|^2&=&\|v_n-x_n\|^2+2\langle v_n-x_n,x_n-y\rangle \\
&&+\|x_n-y\|^2
\end{eqnarray*}
and
\begin{eqnarray*}
s_n&=&\|v_n-x_n\|^2+2\langle v_n-x_n,x_n-y\rangle \\
&=&\|v_n-y\|^2-\|x_n-y\|^2.
\end{eqnarray*}
We then obtain
$$
c_n-\theta s_n+\theta t_n=\frac{\theta^2+\theta+1}{1+\theta}\|x_n-y\|^2.
$$
\noindent By the existence of the limit $\underset{n\rightarrow \infty}\lim c_n$, we get
 $\underset{n\rightarrow \infty}\lim \|x_n-y\|$ exists. We conclude by Lemma \ref{lem:w-converge} that $\{x_n\}$ converges weakly to a point in $\Gamma_\beta$. Consequently, $\{v_n\}$ also  converges weakly to a point in $\Gamma_\beta$. This completes the proof.

\end{proof}

\begin{rem}
One can obtain from \eqref{app7aa} that 
$$
 \frac{\theta}{(1+\theta)^2}\|x_n-v_n\|^2\leq c_n-c_{n+1} $$
  \noindent and thus
$$
\frac{\theta}{(1+\theta)^2}\sum_{j=0}^{n}\|x_j-v_j\|^2\leq \sum_{j=0}^{n}(c_n-c_{n+1})\leq c_0, 
$$  
which implies that (if $x_0=v_0$)
$$
(n+1)\frac{\theta}{(1+\theta)^2}\min_{0\leq j\leq n}\|x_j-v_j\|^2 \leq c_0=\frac{1+\theta(1+\theta)}{1+\theta}\|x_0-y\|^2.
$$  
\noindent Consequently, we have
$$
\min_{0\leq j\leq n}\|x_j-v_j\|\leq \mathcal{O}(\frac{1}{\sqrt{n+1}}).
$$  
 \noindent In the same way, one can obtain
$$
\min_{0\leq j\leq n}\|x_j-v_{j-1}\|\leq \mathcal{O}(\frac{1}{\sqrt{n+1}}),
\min_{0\leq j\leq n}\|z_j-y_j\|\leq \mathcal{O}(\frac{1}{\sqrt{n+1}}).
$$
\end{rem}

\bigskip

\noindent
We design another version of Douglas-Rachford splitting algorithm for DC programming \eqref{dc1} and obtain weak convergence results.

\begin{algorithm}[H]
\caption{Second Version of Douglas-Rachford Algorithm}\label{alg1}
\begin{algorithmic}[1]
\State {\bf Initialization:} Choose $\alpha_n \in [0,1)$, $\beta >0$, and $x_0, v_0 \in H$ arbitrarily. Set $n=0.$

\State {\bf Iteration Step:} Given the iterates $x_n, v_n$, compute
        \begin{eqnarray}\label{ap}
\left\{\begin{array}[c]{ll}
	&u_n=(1-\alpha_n)x_n+\alpha_nv_n,\\
	&y_n = \underset{v \in H}{\rm argmin}\Big\{f (v)+\frac{1}{2\beta}\|v-u_n \|^2  \Big\},\\
    &z_n = \underset{v \in H}{\rm argmin}\Big\{g(v)+\frac{1}{2\beta}\|v-(2y_n-u_n+\beta \nabla h(y_n)) \|^2  \Big\},\\
    &x_{n+1}=u_n+\kappa_n(z_n-y_n),\\
    &v_{n+1}=(1-\alpha_n)v_n+\alpha_nx_n.
     \end{array}
      \right.
      \end{eqnarray}
	\noindent Set $n\leftarrow n+1,$ and go to {\bf Iteration Step} if the stopping criterion is not met.
\end{algorithmic}
\end{algorithm}

\noindent Since our Algorithm \ref{alg1} reduces to \cite[Algorithm 1]{Chuang} when $\alpha_n=0$ for each $n \in \mathbb{N}$ for which the convergence results have been obtained in \cite{Chuang}. It suffices to only consider the case $0<a\leq \alpha_n\leq b<1$ in our convergence result below for Algorithm \ref{alg1}.

\begin{thm}\label{thm1}
The sequence $\{x_n\}$ generated by Algorithm \ref{alg1} converges weakly to a point in $\Gamma_\beta$ under Assumption \ref{ass1} when either $0<a\leq \alpha_n\leq b<1$ or $\alpha_n=0$ for each $n \in \mathbb{N}$.
\end{thm}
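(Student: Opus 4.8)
The plan is to follow the blueprint of the proof of Theorem~\ref{thm2aa}, but to exploit the more symmetric structure of the $v$-update in Algorithm~\ref{alg1} (which feeds back $x_n$ rather than $x_{n+1}$) to obtain a cleaner Lyapunov functional. As noted before the statement, the case $\alpha_n=0$ is exactly \cite[Algorithm 1]{Chuang}, so I only treat $0<a\le\alpha_n\le b<1$. Fix $y\in\Gamma_\beta$ and let $x\in\Omega$ be the associated stationary point, which is unique by Lemma~\ref{DC1} since $2\rho>L$; then the inclusions \eqref{ade2a}--\eqref{ade2b} of Lemma~\ref{LEMMA1} hold for this pair. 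Applying Lemma~\ref{LEMMA1} with $u=u_n$, $\kappa=\kappa_n$ and $y_n,z_n,x_{n+1}$ as in \eqref{ap} gives
\begin{equation*}
\|x_{n+1}-y\|^2\le\|u_n-y\|^2-\beta\kappa_n(2\rho-L)\big(\|y_n-x\|^2+\|z_n-x\|^2\big)-\kappa_n(2-\kappa_n)\|z_n-y_n\|^2.
\end{equation*}

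Next I would expand the two convex combinations in \eqref{ap} using Lemma~\ref{lm2}(ii) with $a=\alpha_n$. Since $u_n-y=(1-\alpha_n)(x_n-y)+\alpha_n(v_n-y)$ and $v_{n+1}-y=(1-\alpha_n)(v_n-y)+\alpha_n(x_n-y)$, one obtains
\begin{align*}
\|u_n-y\|^2&=(1-\alpha_n)\|x_n-y\|^2+\alpha_n\|v_n-y\|^2-\alpha_n(1-\alpha_n)\|x_n-v_n\|^2,\\
\|v_{n+1}-y\|^2&=(1-\alpha_n)\|v_n-y\|^2+\alpha_n\|x_n-y\|^2-\alpha_n(1-\alpha_n)\|x_n-v_n\|^2.
\end{align*}
Substituting the first identity into the displayed inequality and then adding the second, the crucial point is that the coefficients of $\|x_n-y\|^2$ and of $\|v_n-y\|^2$ each sum to $(1-\alpha_n)+\alpha_n=1$. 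Hence, setting $c_n:=\|x_n-y\|^2+\|v_n-y\|^2$, everything collapses to
\begin{equation*}
c_{n+1}\le c_n-2\alpha_n(1-\alpha_n)\|x_n-v_n\|^2-\beta\kappa_n(2\rho-L)\big(\|y_n-x\|^2+\|z_n-x\|^2\big)-\kappa_n(2-\kappa_n)\|z_n-y_n\|^2.
\end{equation*}
This is visibly simpler than the functional used in Theorem~\ref{thm2aa}, where feeding $x_{n+1}$ into $v_{n+1}$ forced an extra telescoping term $\|x_{n+1}-v_n\|^2$; here the swap $v_{n+1}=(1-\alpha_n)v_n+\alpha_n x_n$ removes that difficulty entirely.

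From the last inequality the usual consequences follow: all subtracted quantities are nonnegative, so $\{c_n\}$ is non-increasing, hence convergent and bounded, and $\{x_n\},\{v_n\}$ are bounded. Summing over $n$, together with $2\rho>L$ and the bounds of Assumption~\ref{ass1} (which keep $\alpha_n(1-\alpha_n)$ and $\kappa_n(2-\kappa_n)$ bounded away from $0$), I would deduce
\begin{equation*}
\|x_n-v_n\|\to0,\qquad \|y_n-x\|\to0,\qquad \|z_n-x\|\to0,\qquad \|z_n-y_n\|\to0.
\end{equation*}
Because $\|x_n-v_n\|\to0$ and $\{x_n\}$ is bounded, $\|v_n-y\|^2-\|x_n-y\|^2=\|v_n-x_n\|^2+2\langle v_n-x_n,x_n-y\rangle\to0$, so from $c_n=2\|x_n-y\|^2+\big(\|v_n-y\|^2-\|x_n-y\|^2\big)$ the limit $\lim_n\|x_n-y\|$ exists. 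Since the derivation applies verbatim to every $y\in\Gamma_\beta$ (each paired with the same unique $x\in\Omega$), this gives the first hypothesis of Opial's Lemma~\ref{lem:w-converge} for $K=\Gamma_\beta$.

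The one genuinely delicate step is verifying the second Opial hypothesis, that every weak cluster point of $\{x_n\}$ lies in $\Gamma_\beta$; this is where the demiclosedness Lemma~\ref{lem23} enters. Let $x_{n_k}\rightharpoonup x^\ast$; since $\|u_n-x_n\|=\alpha_n\|v_n-x_n\|\to0$ we also have $u_{n_k}\rightharpoonup x^\ast$. The prox-optimality conditions \eqref{ade1a}--\eqref{ade1b} read $\tfrac1\beta(u_n-y_n)\in\partial f(y_n)$ and $\nabla h(y_n)+\tfrac1\beta(2y_n-z_n-u_n)\in\partial g(z_n)$. Using $y_n\to x$, $z_n\to x$ strongly, the Lipschitz continuity of $\nabla h$, and $u_{n_k}\rightharpoonup x^\ast$, the left-hand sides converge weakly to $\tfrac1\beta(x^\ast-x)$ and to $\nabla h(x)+\tfrac1\beta(x-x^\ast)$ respectively, while the base points converge strongly to $x$. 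Lemma~\ref{lem23} then yields $\tfrac1\beta(x^\ast-x)\in\partial f(x)$ and $\nabla h(x)+\tfrac1\beta(x-x^\ast)\in\partial g(x)$, which are exactly the inclusions characterizing $x^\ast\in\Gamma_\beta$ via Lemma~\ref{DC2} (equivalently, ${\rm prox}_{\beta f}(x^\ast)=x$ together with the companion proximal identity). With both hypotheses in hand, Lemma~\ref{lem:w-converge} gives $x_n\rightharpoonup$ a point of $\Gamma_\beta$, and $\|x_n-v_n\|\to0$ transfers the same weak limit to $\{v_n\}$. I expect this demiclosedness verification to be the main obstacle, since the Fej\'er/Opial machinery of the preceding paragraphs becomes routine once the clean Lyapunov functional $c_n$ has been identified.
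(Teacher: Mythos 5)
Your proposal is correct and follows essentially the same route as the paper's proof: the same application of Lemma~\ref{LEMMA1}, the same expansions via Lemma~\ref{lm2}(ii), the same Lyapunov functional $\|x_n-y\|^2+\|v_n-y\|^2$ (which the paper calls $a_n$), the same limits $\|x_n-v_n\|\to0$, $\|y_n-x\|\to0$, $\|z_n-x\|\to0$, $\|z_n-y_n\|\to0$, the same demiclosedness argument via Lemma~\ref{lem23} applied to \eqref{ade1a}--\eqref{ade1b}, and the same conclusion by Opial's Lemma~\ref{lem:w-converge}. The only differences are notational (your $c_n$ is the paper's $a_n$) and a slightly more explicit justification of why $u_{n_k}$ inherits the weak limit of $x_{n_k}$.
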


\begin{proof}
We can easily obtain from Algorithm \ref{alg1} and Lemma \ref{LEMMA1} that (with $\bar{y}=y$)
\begin{eqnarray}\label{ade11add}
\|x_{n+1}-y\|^2 &\leq& \|u_n-y\|^2-\beta \kappa_n(2\rho-L)\Big(\|y_n-x\|^2+\|z_n-x\|^2\Big)\nonumber \\
&&-\kappa_n(2-\kappa_n)\|z_n-y_n\|^2.
  \end{eqnarray}
By Lemma \ref{lm2}(ii), we have
\begin{eqnarray}\label{app4}
\|u_n-y\|^2&=& \big\|(1-\alpha_n)(x_n-y)+\alpha_n(v_n-y)\big\|^2\nonumber\\
&=&(1-\alpha_n)\|x_n-y\|^2+\alpha_n\|v_n-y\|^2 -\alpha_n(1-\alpha_n)\|x_n-v_n\|^2\qquad
\end{eqnarray}
and
\begin{eqnarray}\label{app5}
\|v_{n+1}-y\|^2&=& \|(1-\alpha_n)(v_n-y)+\alpha_n(x_n-y)\|^2\nonumber\\
&=&(1-\alpha_n)\|v_n-y\|^2+\alpha_n\|x_n-y\|^2 -\alpha_n(1-\alpha_n)\|x_n-v_n\|^2.\qquad
\end{eqnarray}
Plugging \eqref{app4} into \eqref{ade11add}, we obtain
\begin{eqnarray}\label{app6}
\|x_{n+1}-y\|^2&\leq& (1-\alpha_n)\|x_n-y\|^2+\alpha_n\|v_n-y\|^2\nonumber\\
&&-\alpha_n(1-\alpha_n)\|x_n-v_n\|^2-\beta \kappa_n(2\rho-L)\Big(\|y_n-x\|^2+\|z_n-x\|^2\Big)\nonumber \\
&&-\kappa_n(2-\kappa_n)\|z_n-y_n\|^2.
  \end{eqnarray}
Denote
\begin{equation}\label{ineq-bai}
a_n:=\|x_n-y\|^2+\|v_n-y\|^2, \quad \forall n \in \mathbb{N}.
\end{equation}
Then, summing  \eqref{app5} and \eqref{app6} gives
\begin{eqnarray}\label{app8}
a_{n+1}&\leq& a_n-2\alpha_n(1-\alpha_n)\|x_n-v_n\|^2-\beta \kappa_n(2\rho-L)\Big(\|y_n-x\|^2+\|z_n-x\|^2\Big)\nonumber \\
&&-\kappa_n(2-\kappa_n)\|z_n-y_n\|^2,
  \end{eqnarray}
which, by $\alpha_n \in [0,1)$, and $\kappa_n \in (0,2)$, implies that $\{a_n\}$ is a monotone non-increasing and $\{a_n\}$ is convergent. Moreover, $\{a_n\}$ is bounded. We then obtain from the definition of $\{a_n\}$ that both $\{x_n\}$ and $\{v_n\}$ are bounded. Furthermore, $\{u_n\}$, $\{y_n\}$ and $\{z_n\}$ are also bounded.\\

\noindent We also obtain from \eqref{app8} that
\begin{equation*}
 \beta \kappa_n(2\rho-L)\Big(\|y_n-x\|^2+\|z_n-x\|^2\Big)\leq a_n-a_{n+1},
\end{equation*}
which implies that
\begin{equation*}
\underset{n\rightarrow \infty}\lim  \beta \kappa_n(2\rho-L)\Big(\|y_n-x\|^2+\|z_n-x\|^2\Big)=0.
\end{equation*}
Since $2\rho>L$ and $0<a\leq \kappa_n \leq b<2$, we get
\begin{equation}\label{app7ab}
\underset{n\rightarrow \infty}\lim  \|y_n-x\|=0,~~{\rm and}~~\underset{n\rightarrow \infty}\lim\|z_n-x\|=0.
\end{equation}
\noindent By the condition that $\underset{n\rightarrow \infty}\liminf \kappa_n(2-\kappa_n)>0$, we also have from \eqref{app8} that
$$
\underset{n\rightarrow \infty}\lim \|z_n-y_n\|=0.
$$
\noindent
Also from \eqref{app8},
$$
\underset{n\rightarrow \infty}\lim 2\alpha_n(1-\alpha_n)\|x_n-v_n\|^2=0.
$$
\noindent
Hence,
$$
\underset{n\rightarrow \infty}\lim \|x_n-v_n\|=0.
$$
\noindent We have from $v_{n+1}=(1-\alpha_n)v_k+\alpha_nx_n$ that
$$
v_{n+1}-v_n=\alpha_n(x_n-v_n)\rightarrow 0, n\rightarrow \infty.
$$
\noindent
So,
\begin{eqnarray*}
\|x_{n+1}-x_n\|\leq \|x_{n+1}-v_{n+1}\|+\|v_{n+1}-v_n\|+\|x_n-v_n\|\rightarrow 0, n \rightarrow \infty.
\end{eqnarray*}

 \noindent Since $\{x_n\}$, $\{y_n\}$ are $\{z_n\}$ are bounded, let $v \in H$ be a weak cluster point of $\{x_n\}$. Then there exists $\{x_{n_j}\} \subset \{x_n\}$ such that $x_{n_j} \rightharpoonup v,$ $j \to \infty$. Then $x_{n_j+1} \rightharpoonup v,$ $j \to \infty$, $z_{n_j} \rightharpoonup v,$ $j \to \infty$
and $u_{n_j} \rightharpoonup v,$ $j \to \infty$. Replacing $n$ with ${n_j}$ in \eqref{ade1a} and \eqref{ade1b}, we have from \eqref{app7ab} and Lemma \ref{lem23} that
$$
\frac{1}{\beta}(v-x) \in \partial f(x)~~{\rm and}~~\nabla h(x)+\frac{1}{\beta}(x-v) \in \partial g(x).
$$
\noindent This implies that ${\rm prox}_{\beta f}(v)=x$ and ${\rm prox}_{\beta g}(2x-v+\beta \nabla h(x))=x$. We then obtain that
$v \in \Gamma_\beta$. \\

\noindent
We now show that $\{x_n\}$ converges weakly to an element in $\Gamma_\beta$. Define
\begin{align*}
c_n:=\|v_n-x_n\|^2+2\langle v_n-x_n, x_n-y\rangle.
\end{align*}
Since $v_n-x_n\rightarrow 0$, as $n\rightarrow \infty$ and $\{x_n\}$ is bounded, we have that $c_n\rightarrow 0$, as $n\rightarrow \infty$. Observe also that
\begin{align*}
\|v_n-y\|^2=\|v_n-x_n\|^2+2\langle v_n-x_n,x_n-y\rangle+\|x_n-y\|^2
\end{align*}
which implies that
\[
c_n = \|v_n-y\|^2-\|x_n-y\|^2.
\]
Combine it with the definition of $a_n$ to have
\[
\|x_n-y\|^2 =\frac{1}{2}\big(a_n-c_n\big).
\]
\noindent Since $\lim_{n\rightarrow \infty} a_n$ exists,  then  $\lim_{k\rightarrow \infty} \|x_n-y\|$ exists.
By Lemma \ref{lem:w-converge}, we conclude that $\{x_n\}$ converges weakly to a point in $\Gamma_\beta$.
The proof is complete.
\end{proof}

\begin{rem}${}$
\noindent The proposed Algorithm \ref{alg1} can be viewed as an extension of an inertial-type algorithm. In particular, the following is a special case of \eqref{ap} in  Algorithm \ref{alg1}:
    \begin{eqnarray}\label{love1}
\left\{\begin{array}[c]{ll}
	&u_{n-1}=v_n+\theta_{n-1}(v_n-v_{n-1}),\\
	&y_{n-1} = \underset{v \in H}{\rm argmin}\Big\{f (v)+\frac{1}{2\beta}\|v-u_{n-1} \|^2  \Big\},\\
    &z_{n-1} = \underset{v \in H}{\rm argmin}\Big\{g(v)+\frac{1}{2\beta}\|v-(2y_{n-1}-u_{n-1}+\beta \nabla h(y_{n-1})) \|^2  \Big\},\\
    &x_n=u_{n-1}+\kappa_{n-1}(z_{n-1}-y_{n-1}),\\
    &v_{n+1}=(1-\alpha_n)v_n+\alpha_nx_n
     \end{array}
      \right.
      \end{eqnarray}  
 where $\theta_n:=\frac{1-2\alpha_n}{\alpha_n} \in(-1,+\infty)$ and $0<a\leq \alpha_n\leq b<1$. To see this, 
suppose $\bar{u}_n:=v_n-x_n$. Then the first update in \eqref{ap} implies
\begin{equation}\label{love2}
u_n= v_{n+1}+\theta_n(v_{n+1}-v_n)
 = x_n+\bar{u}_n+(1+\theta_n)(v_{n+1}-v_n).
\end{equation}
We obtain from the last update in \eqref{ap}  that
$
v_{n+1}-v_n
 = -\alpha_n \bar{u}_n.
$
If we substitute this into \eqref{love2}  together with $\theta_n=\frac{1-2\alpha_n}{\alpha_n}$, we have 
\begin{eqnarray*}
u_n
&=& x_n+(1-\alpha_n-\theta_n\alpha_n)\bar{u}_n\\
&=& x_n+(1-\alpha_n-\theta_n\alpha_n)(v_n-x_n)\\
&=&(1-\alpha_n)x_n+\alpha_nv_n.
\end{eqnarray*}
Since $\theta_n=\frac{1-2\alpha_n}{\alpha_n}\in(-1,+\infty)$, we have $\alpha_n=\frac{1}{2+\theta_n}\in (0,1)\subset [0,1).$ Therefore,  \eqref{love1} is a special case of \eqref{ap}.

\end{rem}

\section{Numerical Experiments}\label{Sec:Numerics}
\noindent

\noindent The numerical experiments is devoted to the implementation of the proposed algorithms in solving real-life problems. Three examples are considered where we applied the algorithms to solve optimization models in machine learning and compare their performance with other methods in the literature such as \cite[Algorithm 1]{Chuang} and \cite[Algorithm 2]{Hu} and the popular ADMM method. All codes are written on Python using Jupyter Notebook. Also, the datasets used are source from open source such as the UCI database.

\subsection{Tuning of parameters}\label{exs1}
It is well known that control parameters are very important to the performance of the iterative algorithms. Hence, we start the numerical section by studying the behaviour of the proposed algorithms to change in the control parameters. For this purpose, we tune the values of the parameters $\theta$ and $\beta$ in the Douglas-Rashford splitting Algorithm \ref{alg2}. We consider the following optimization model:
\begin{equation}\label{ex1}
	\underset{x \in \mathbb{R}^n}{\min}~ f(x) + g(x) - h(x)
\end{equation}
where $f:\mathbb{R}^n \to [+\infty,-\infty)$ is the quadratic function defined by $f(x) = \frac{1}{2}x^\top Qx + c^\top x + d,$ with $Q \in \mathbb{R}^{M \times N}$ is  a positive definite matrix, $c \in \mathbb{R}^N$ is a constant vector and $d \in \mathbb{R}$ is a constant scalar; $g: \mathbb{R}^n \to \mathbb{R}$ is the $\ell_1$ regularization term and $h: \mathbb{R}^n \to \mathbb{R}$ is the Huber loss function given by
\begin{equation*}
	h(x) = \begin{cases}
		\frac{1}{2}x^2, \qquad & \|x\|_{2} \leq \delta, \\
		\delta(|x| - \frac{1}{2}\delta), & \mbox{otherwise}
	\end{cases}
\end{equation*}
and $\delta$ being a constant scalar. This type of model is very common in machine learning. For example, in robust classification with sparsity regularization (see, e.g. \cite{Wen}), problem \eqref{ex1} is used to learn a robust classifier that is also sparse. The Huber loss function is used in this case to handle noisy data, the quadratic term is used to regularize the model complexity and the $\ell_1$ regularization term is used to induce sparsity in the model parameters. Also problem \eqref{ex1} is used in to reconstruct a signal from noisy measurements while promoting sparsity in signal processing (see, e.g. \cite{Candes&Wakins}). The quadratic fidelity term is used to measure the fidelity of the reconstructed signal, the $\ell_1$ regularization term is used to enforce sparsity and the Huber loss function is used to handle outliers or noise in the measurement.

\noindent The proximal operator of the quadratic term is calculated analytically, which is given by
\begin{equation*}
	{\rm prox}_{\beta f}(x) = \left(Q + \frac{1}{\beta}I\right)^{-1}\left(\frac{1}{\beta}x - c\right),
\end{equation*}
where $I$ is the identity operator (see Appendix for details). To handle the inverse of the matrix $Q$ in the case where it is a non-square matrix, we used the pseudo-inverse $Q^{+}$ of $Q$ which is calculated by $Q^{+} = (Q^\top Q)^{-1}Q^\top,$ where $Q^{+}$ is the transpose of $Q$. This allows us to handle non-square of $Q$ properly. In the first experiment, we take $\kappa_n = \kappa = 0.009, \delta = 0.001$,  $\max\_\mbox{iter} = 1000.$ The algorithm is stopped if $Err = \|x_{n+1}-x_n\|^2 < 10^{-5}$ or $\max\_\mbox{iter}$ is reached. In the first instance, we choose $\theta = 0.0005, 0.05, 0.5, 1, 5$ and vary the values of $(M,N)$ as $(100,25), ~ (200,100), ~ (250, 150)$ and $(300,200)$. The starting points $x_0$ and $ v_0$ are generated randomly. The results of the experiments are shown in Table \ref{tab1} and Figure \ref{fig1}.

\noindent Furthermore, taking $\theta = 0.0005,$ we test the algorithm for  $\beta = 0.0001, 0.001, 0.1, 1, 10$ and vary the value of $(M,N)$ as $(100,25), ~ (200,100), ~ (300,150),$ (400,200), (500,300) and  $(600, 400).$ The results are shown in Table \ref{tab2} and \ref{fig2}.

\begin{figure}[h!]
	\begin{subfigure}{1.0\textwidth}
		\centering
		\includegraphics[width=.3\linewidth]{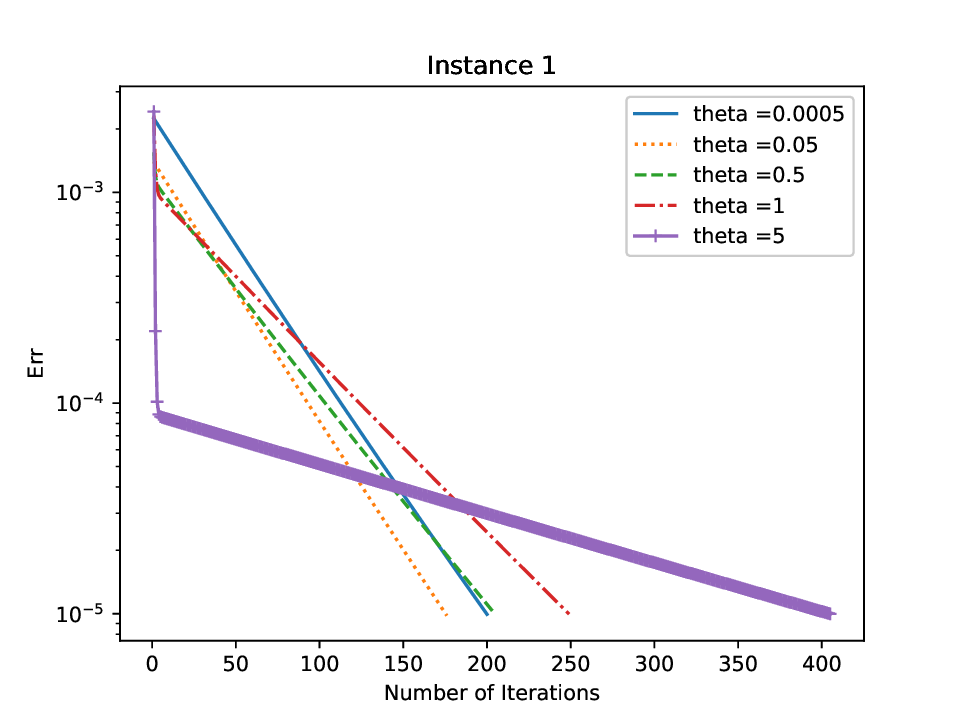}
		\includegraphics[width=.3\linewidth]{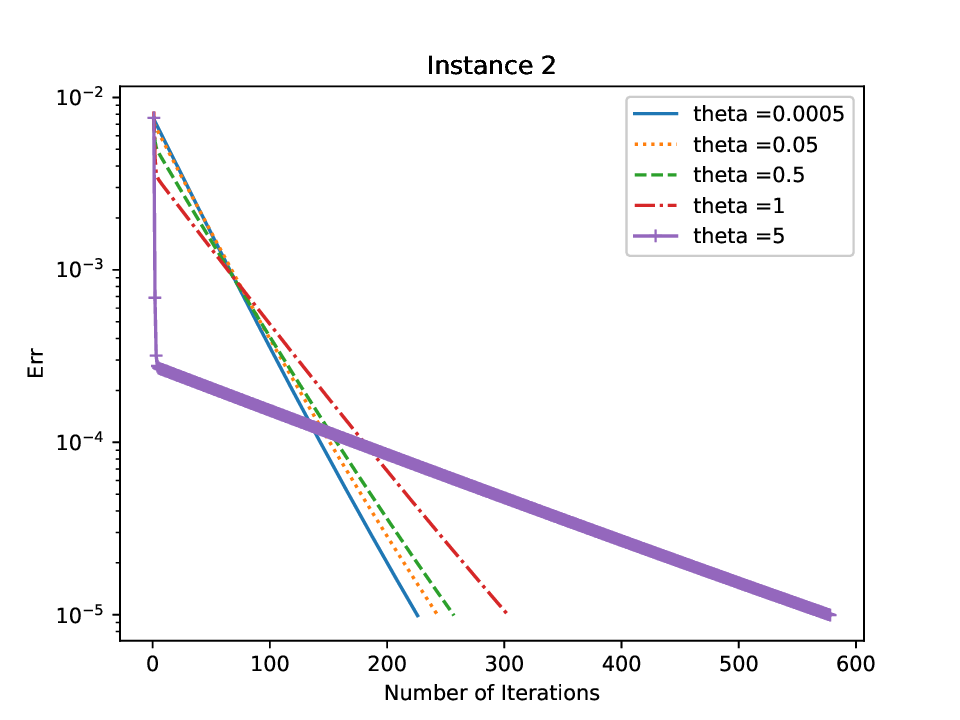}
		\includegraphics[width=.3\linewidth]{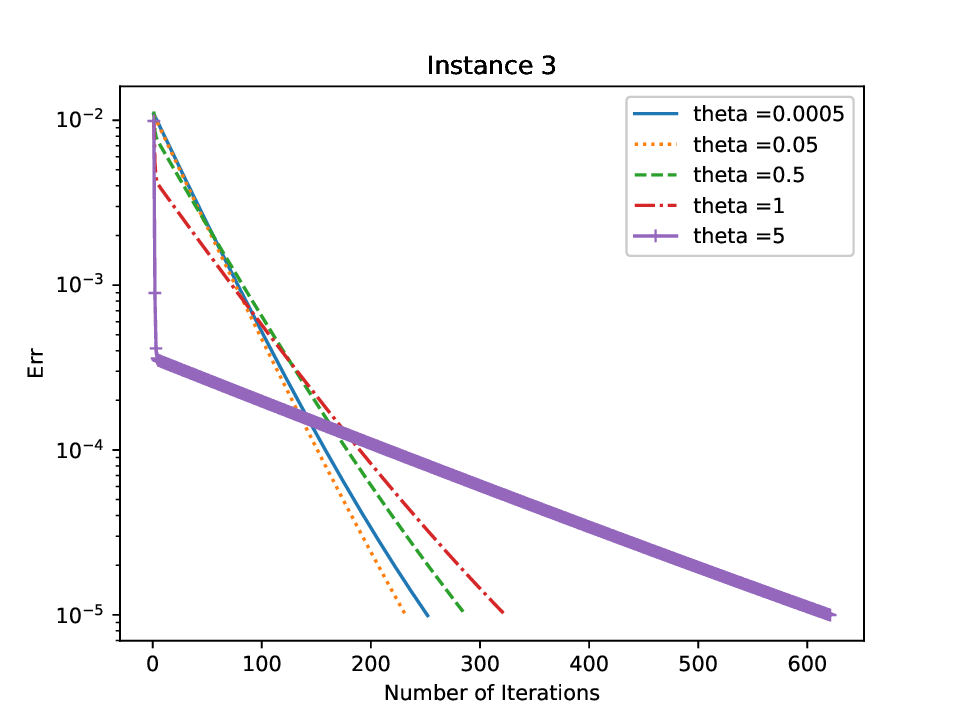}
		\includegraphics[width=.3\linewidth]{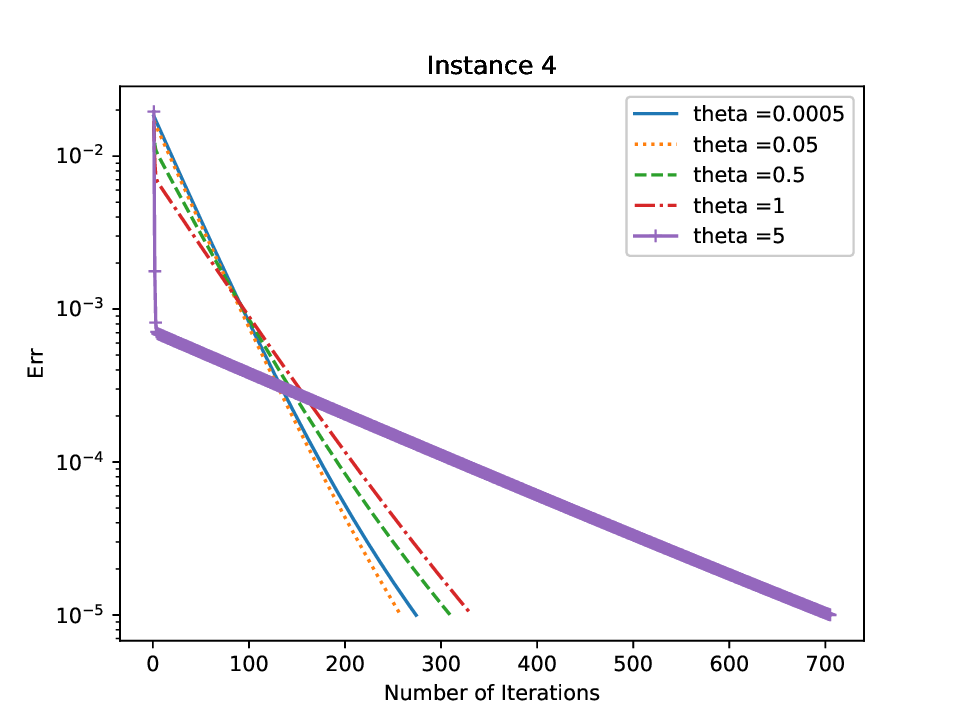}
		\includegraphics[width=.3\linewidth]{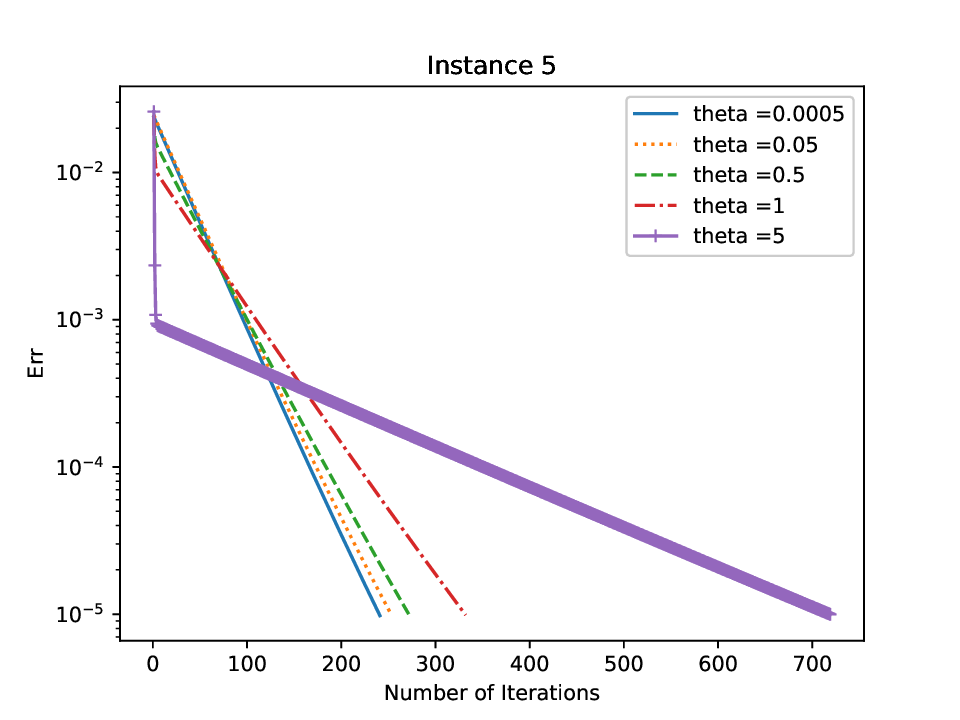}
		\includegraphics[width=.3\linewidth]{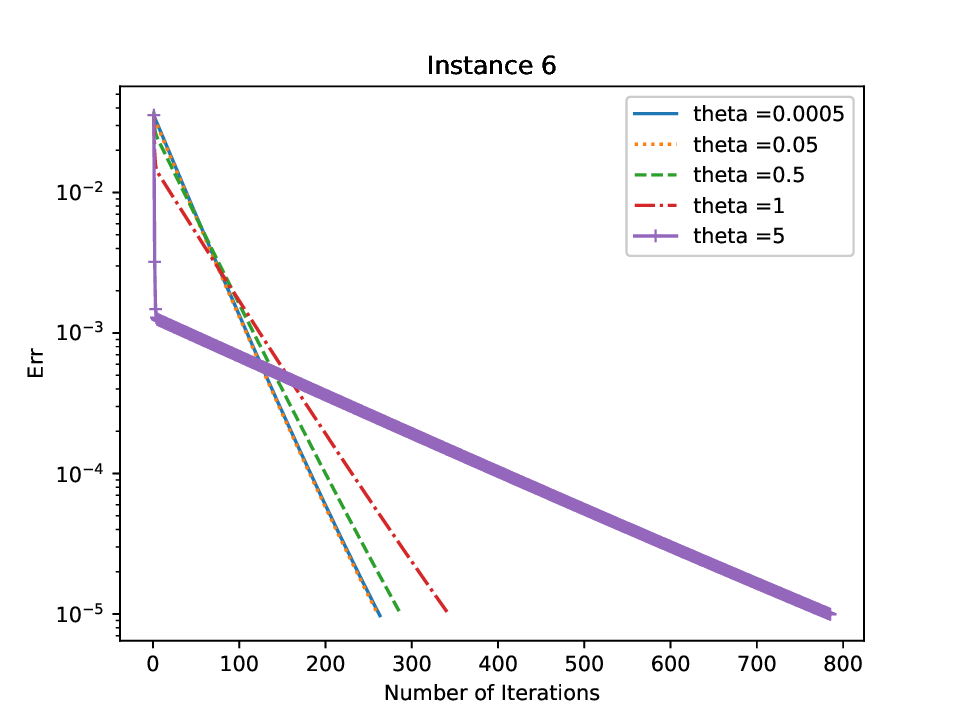}
	\end{subfigure}
	\caption{ Comparing the performance of proposed Douglas-Rashford splitting algorithm for various values of $\theta$ in Example \ref{exs1} using various dimension of $(M,N)$. Instance 1: (100,25); Instance 2: (200,100), Instance 3: (300,150), Instance 4: (400, 200), Instance 5: (500,300) and Instance 6: (600,400).} \label{fig1}
\end{figure}

\begin{figure}[h!]
	\begin{subfigure}{1.0\textwidth}
		\centering
		\includegraphics[width=.3\linewidth]{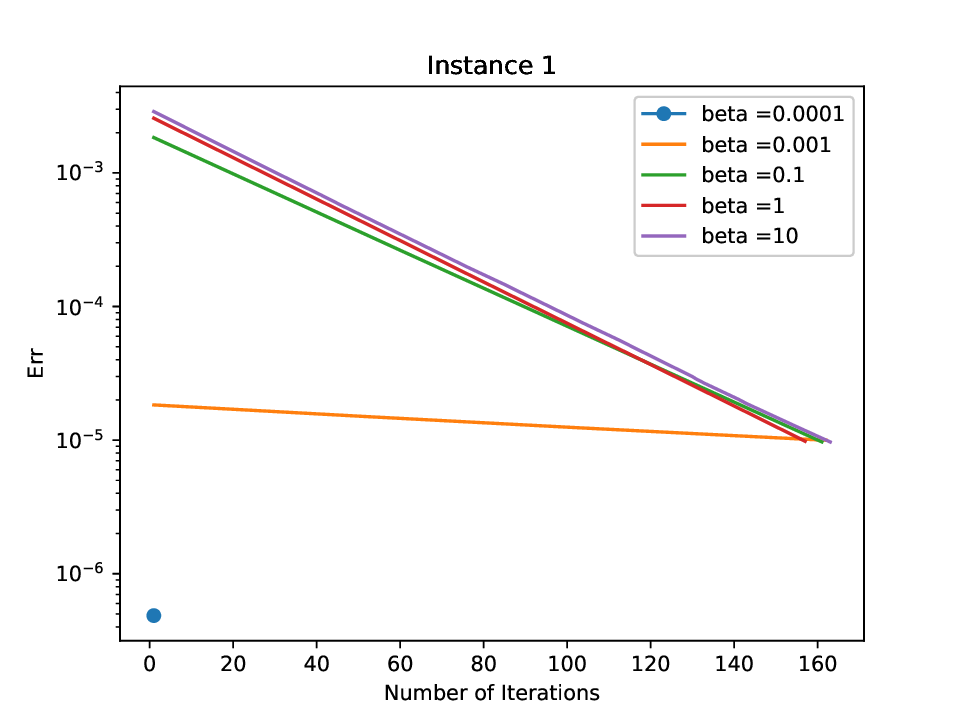}
		\includegraphics[width=.3\linewidth]{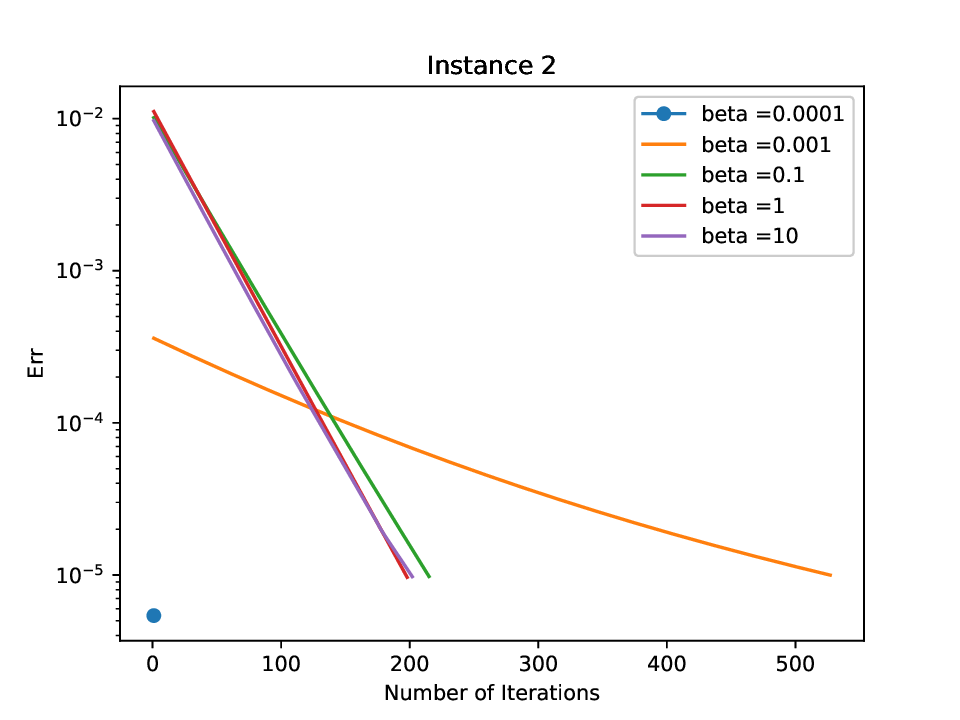}
		\includegraphics[width=.3\linewidth]{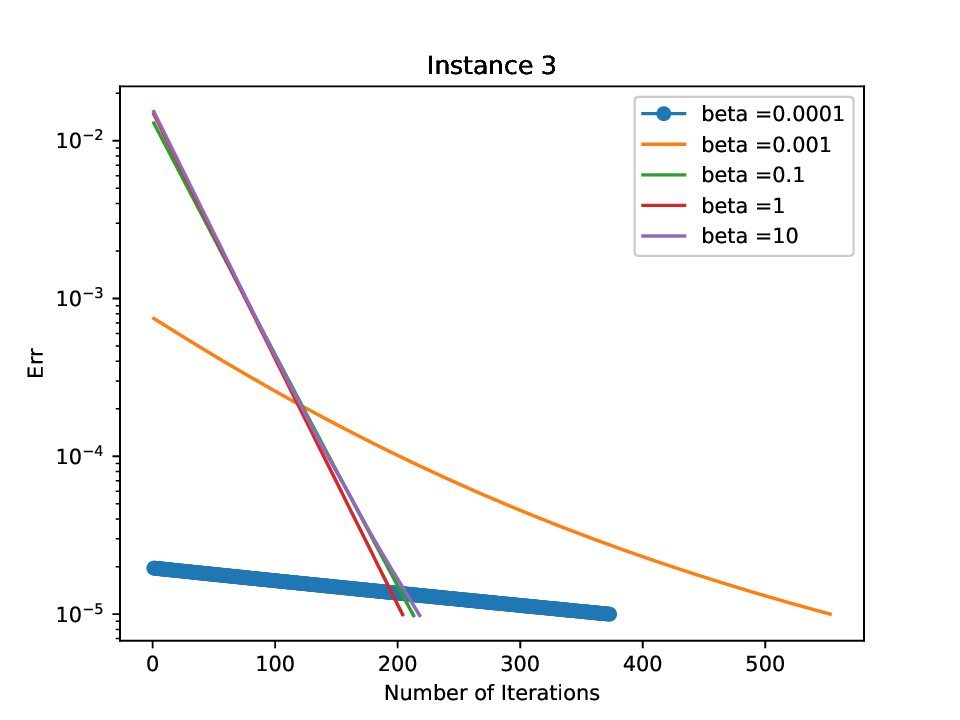}
		\includegraphics[width=.3\linewidth]{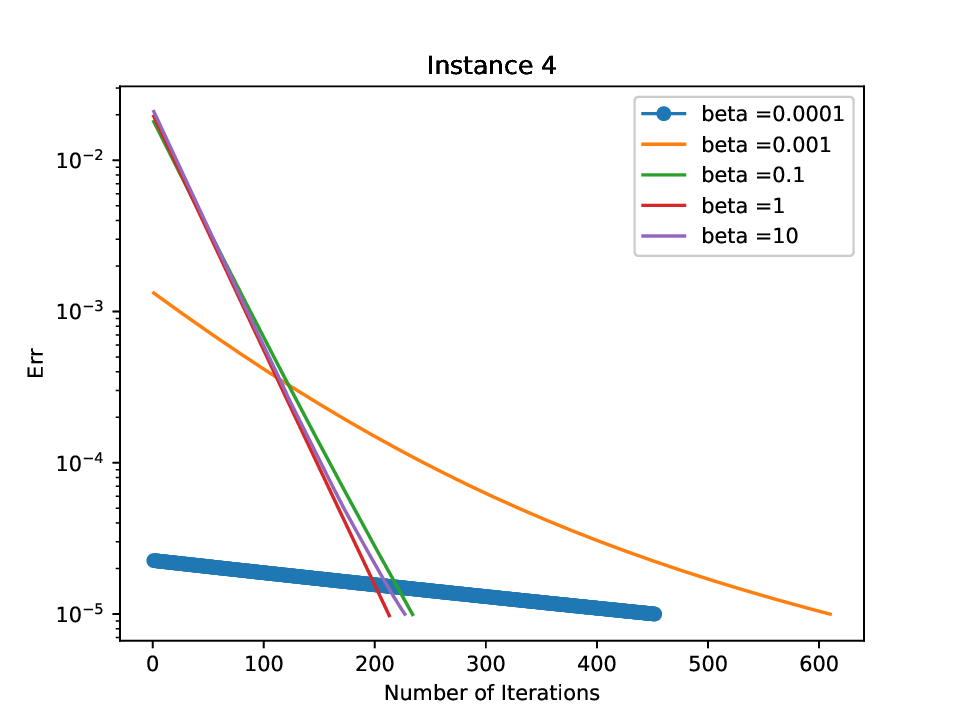}
		\includegraphics[width=.3\linewidth]{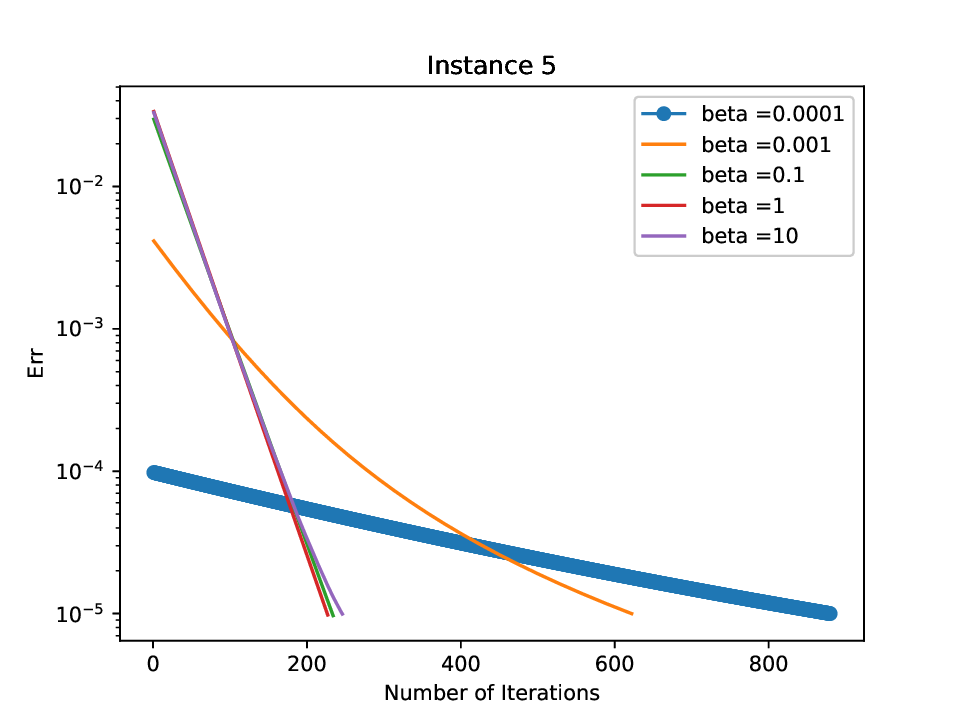}
		\includegraphics[width=.3\linewidth]{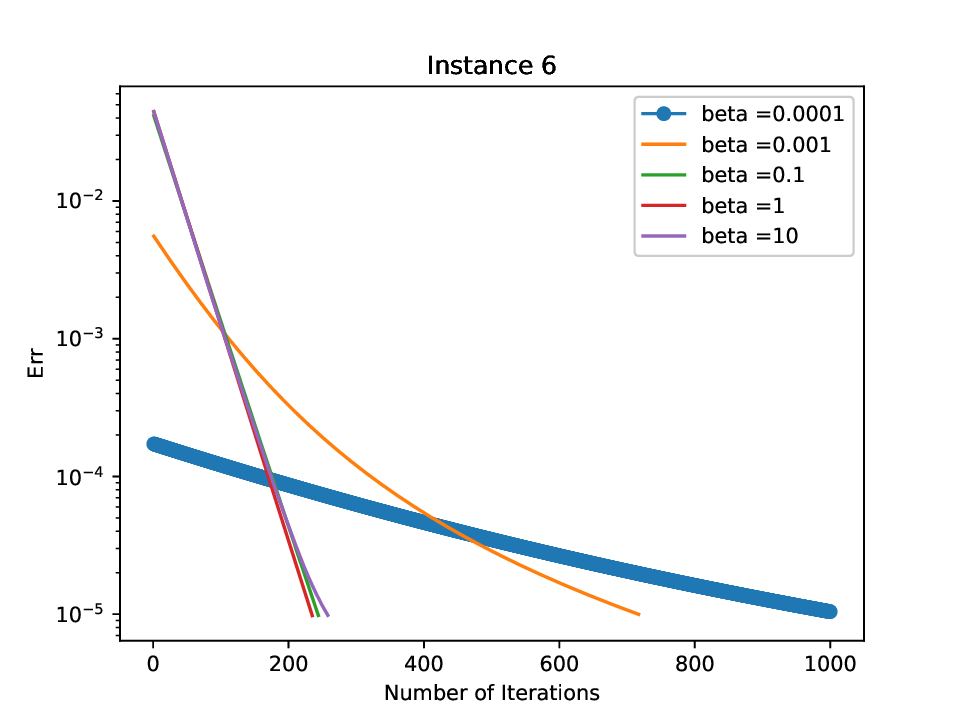}
	\end{subfigure}
	\caption{ Comparing the performance of proposed Douglas-Rashford splitting algorithm for various values of $\beta$ in Example \ref{exs1} using various dimension of $(M,N)$. Instance 1: (100,25); Instance 2: (200,100), Instance 3: (300,150), Instance 4: (400, 200), Instance 5: (500,300) and Instance 6: (600,400).} \label{fig2}
\end{figure}

\begin{table}[!h]
	\centering
	\begin{tabular}{llllll}
		\toprule
	$(M,N)$ & $\theta = 0.0005$ & $\theta = 0.05 $ & $\theta = 0.5 $ & $\theta = 1 $ & $\theta = 5$ \\
	 &	Iter/Time & Iter/Time & Iter/Time & Iter/Time & Iter/Time \\
		\midrule
		(100,25) & 201/ 0.0712 & 177/0.0532 & 206/0.0473 & 250/0.0626 & 406/0.1098 \\
		(200,100) & 227/0.2670 & 244/0.3138 & 258/0.3298 & 304/0.4152 & 579/0.6904 \\
		(300,150) & 253/0.276 & 233/0.2830 & 288/0.3451 & 324/0.3999 & 622/0.8319 \\
		(400,250) & 275/0.5952 & 259/0.5378 & 311/0.6616 & 333/4.2368 & 706/1.5850 \\
		(600,300) & 242/2.7268 & 254/2.5572 & 273/2.8729 & 333/4.2368 & 720/9.0847 \\
		(600,400) & 264/6.1672 & 262/5.8376 & 289/6.8894 & 344/8.6013 & 786/26.6056 \\
		\bottomrule
	\end{tabular}
	\caption{Results of varying parameter $\theta$ in the proposed Douglas-Rashford splitting algorithm \ref{alg2}.} \label{tab1}
\end{table}

\begin{table}[!h]
	\centering
	\begin{tabular}{llllll}
		\toprule
		$(M,N)$ & $\beta = 0.0001$ & $\beta = 0.001 $ & $\beta = 0.1 $ & $\beta = 1 $ & $\beta = 10$ \\
		&	Iter/Time & Iter/Time & Iter/Time & Iter/Time & Iter/Time \\
		\midrule
		(100,25) & 2/0.0013  & 163/0.0584 & 162/0.0373 & 158/0.0313 & 164/0.0629 \\
		(200,100) & 2/0.0020 & 528/0.3301 & 216/0.1566 & 199/0.1415 & 203/0.1254 \\
		(300,150) & 374/0.8984 & 554/1.2979 & 214/0.5333 & 205/0.4861 & 219/0.6669 \\
		(400,250) & 453/1.1677 & 611/1.7133 & 235/0.6431 & 214/0.5807 & 228/ 0.6119 \\
		(600,300) & 881/9.2771 &  623/6.1512 & 235/2.3229 & 228/2.2761 & 247/2.9510 \\
		(600,400) & 1000/17.0568 & 718/12.6837 & 245/4.8480 & 236/4.2068 & 259/5.3359 \\
		\bottomrule
	\end{tabular}
	\caption{Results of varying stepsize $\beta$ in the proposed Douglas-Rashford splitting algorithm \ref{alg2}.} \label{tab2}
\end{table}

\noindent In the initial experiment, where we assess the performance of the algorithm across a range of $\theta$ values, we observe some distinct trends in the behaviour of the algorithm. Notably, we find that the effectiveness of the algorithm diminishes as $\theta$ exceeds a critical threshold $(\theta > 1)$, signaling a clear degradation in performance. This decline is further worsen with increase  in the values of both $M$ and $N$, providing evidence of an overestimation in the control parameter. Conversely, a noteworthy observation emerges when the value of $\theta$ is smaller. The algorithm exhibits enhanced performance with smaller values of $\theta$, even with larger values dimension of $M$ and $N$. Additionally, upon closer examination, we identify that the algorithm reaches its best performance when $\theta$ is set to 0.05, underscoring the importance of optimal parameter selection in maximizing the effectiveness of the proposed algorithm.

\noindent In the subsequent experiment, dedicated to evaluating the performance of the algorithm across different values of the stepsize $\beta$, some intriguing behaviour emerge. Particularly, for smaller values of $N$ and $M$, we observe improved algorithmic performance with decreased $\beta$, notably with $\beta = 0.0001$. However, as the values of $M$ and $N$ increase, a drastic deterioration in algorithmic efficacy is noted for smaller $\beta$ values, indicative of an underestimation in stepsize. Interestingly, regardless of the specific values of $M$ and $N$, the algorithm consistently achieves better performance with large value of $\beta$. It is noted that the best performance of the algorithm is achieved when $\beta = 1.$

\subsection{Regularized least squares problem with the logarithmic regularizer}\label{ex2}
\noindent Next, we shall consider the regularized least square (RLS) model with logarithmic regularizer given as follows:
\begin{equation}\label{dcexa}
  \min_{w \in \mathbb{R}^N} \mathcal{J}(w)=\frac{1}{2}\|Aw-b\|^2+\sum_{i=1}^{N}(\mu \log (|w_i|+\epsilon)-\mu \log \epsilon),
\end{equation}
where $A \in \mathbb{R}^{m \times  N}, b \in \mathbb{R}^m$,  $\epsilon>0$ is a constant and $\mu>0$ is the regularization parameter. RLS with logarithmic regularizer is very common in machine learning, mainly for preventing overfitting in models, see, e.g. \cite{hastie2001elements,Bishop,Murphy}.  It is easy to see that the model \eqref{dcexa} is a special case of DC programming \eqref{dc1} with $f(w)=\frac{1}{2}\|Aw-b\|^2$, $g(w)=\frac{\mu}{\epsilon}\|w\|_1$ and
$h(w)= \sum_{i=1}^{N}\mu\Big(\frac{|w_i|}{\epsilon}-\log (|w_i|+\epsilon)+\log \epsilon \Big)$. The first-order optimization condition of the first subproblem in Algorithm \ref{alg2} and Algorithm \ref{alg1} give (\cite{Hu})
$$
A^*(Ay_n-b)+\frac{1}{\beta}(y_n-u_n)=0,
$$
\noindent which yields
$$
(\beta A^* A+I)y_n=\beta A^* b+u_n,
$$
\noindent and can be solved effectively by the LU factorization method, the conjugate gradient method and so on \cite{Boyd}.\\

\noindent In our experiments, we test the performance of the proposed algorithms for finding the minimizer of \eqref{dcexa} and compare the results with other methods in the literature. The matrices $A \in \mathbb{R}^{m \times N}$ and $b \in \mathbb{R}^m$ are generated randomly  with i.i.d. standard Gaussian entries, and then normalized so that the
columns of $A$ have unit norms. Also the vector $b$ is generated randomly such that $b$ is non-zero vector. The proposed Algorithm \ref{alg2} and \ref{alg1} are compared with \cite{PhanDCA} (DCA) and \cite[Algorithm 1]{Chuang} 
 (GDCP). We initialize the algorithms by generating the starting points randomly, $\mu = 0.001, \epsilon = 0.5,$ $max\_iter = 1000, \beta = 0.04, \kappa_n  = \frac{n}{n+10},$ $\theta = 0.9$ for Alg \ref{alg2}, $\alpha_n = \frac{1}{n+1}$ for Alg \ref{alg1} and $\alpha = 0.09$ for GDCP.
We stopped the algorithms when $$
Err = \frac{\|x_n-x_{n-1}\|}{\max\{1,\|x_n\|\}}<10^{-5}
$$ and record the number of iteration and CPU time taken for each algorithm. The results of the experiments are recorded in Table \ref{tab_example2} and Figure \ref{fig_example2}.

\begin{table}[!h]
	\centering
	\begin{tabular}{|l|l|l|l|l|}
		\toprule
		$(m,N)$ & Alg 1 & Alg 2 & DCA & GDCP  \\
		&	Iter/Time & Iter/Time & Iter/Time & Iter/Time  \\
		\midrule
		(100,50) & 156/0.0708 & 212/0.0398 & 331/0.0785 & 1000/0.1567 \\
		(200,128) & 160/0.1762 & 217/0.1728 & 343/0.2659 & 1000/0.8564 \\
		(521,304) & 168/1.0966 & 228/1.4425 & 366/1.8522 & 760/3.8603 \\
		(700,500) & 169/6.3734 & 226/7.8930 & 365/12.1768 & 763/26.9791 \\
		(1000,700) & 171/11.5587 & 231/19.5223 & 369/30.5246 & 760/63.9168 \\
		(1500,1000) & 174/29.7912 & 236/47.7586 & 378/72.9514 & 759/145.9198 \\
		\bottomrule
	\end{tabular}
	\caption{Comparison of numerical results for Example \ref{ex2}.} \label{tab_example2}
\end{table}

\begin{figure}[h!]
	\begin{subfigure}{1.0\textwidth}
		\centering
		\includegraphics[width=.3\linewidth]{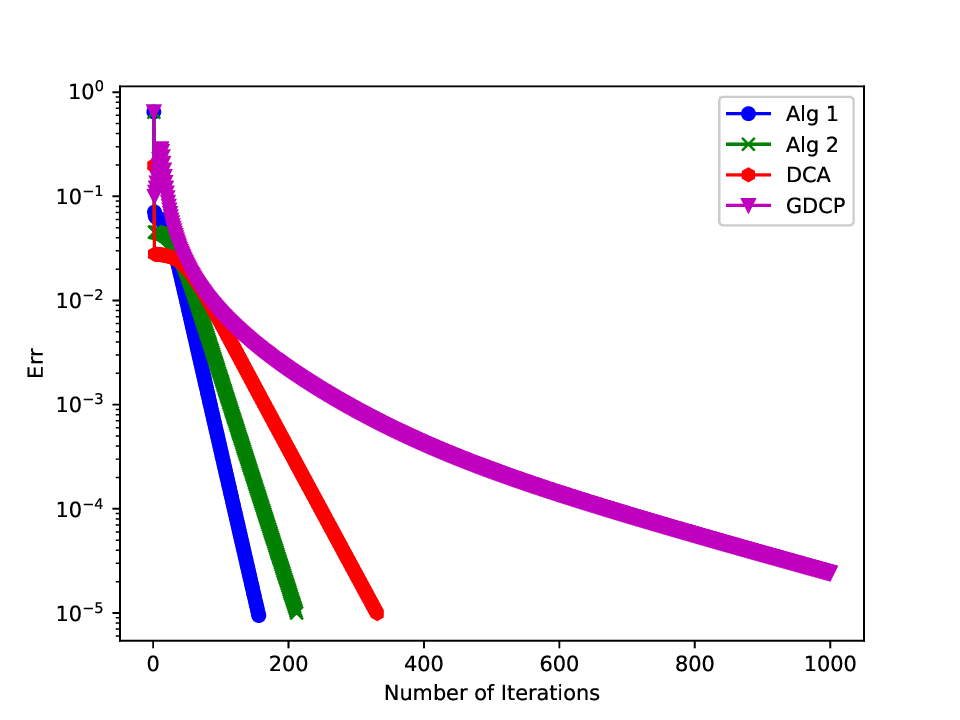}
		\includegraphics[width=.3\linewidth]{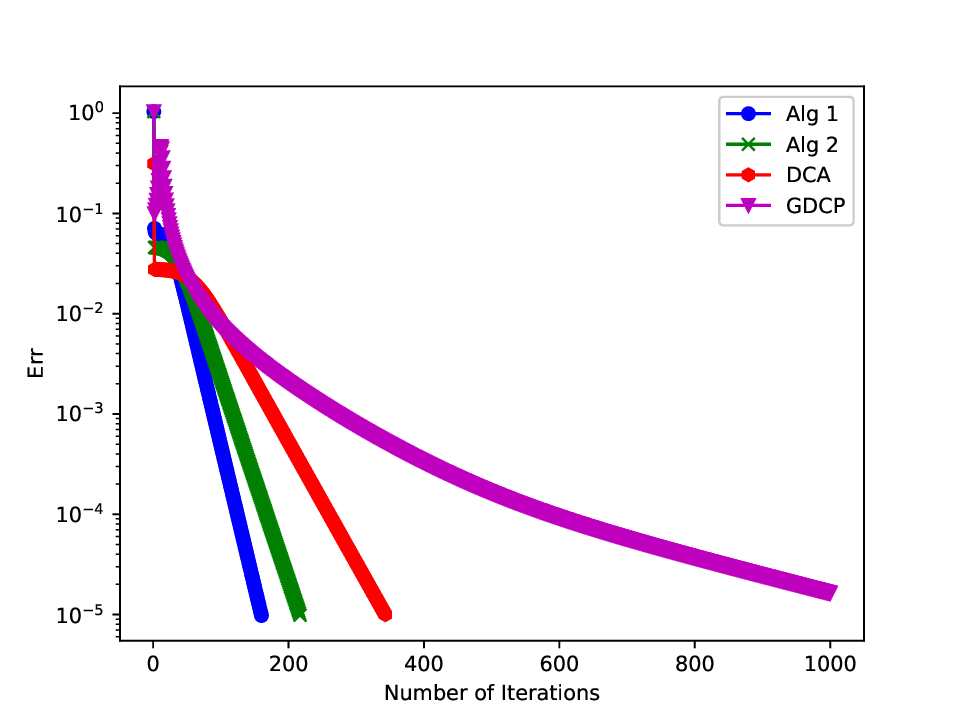}
		\includegraphics[width=.3\linewidth]{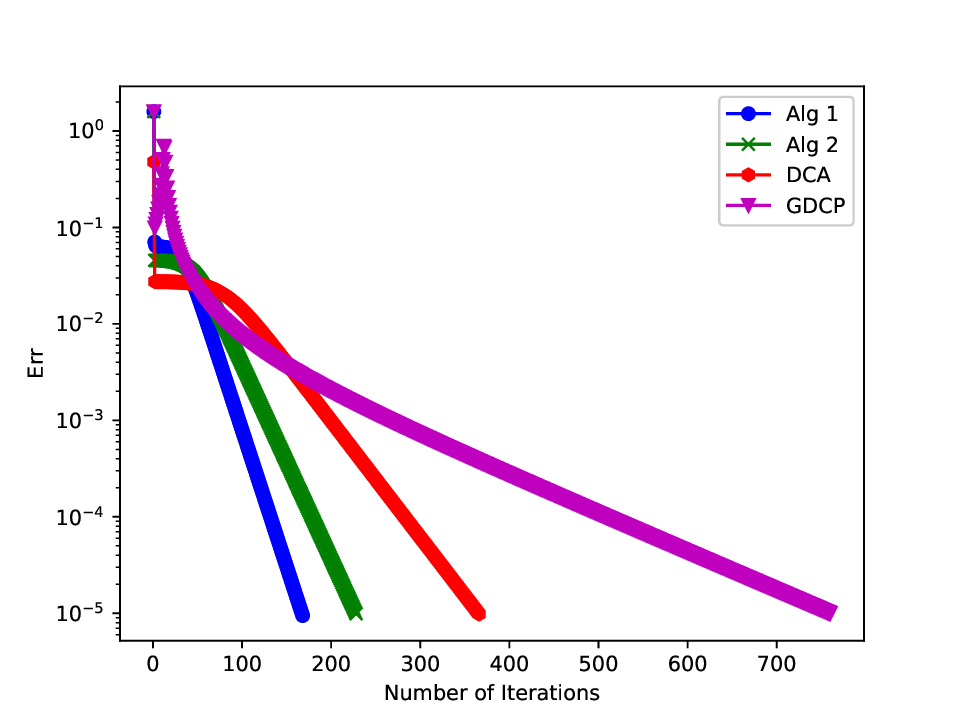}
		\includegraphics[width=.3\linewidth]{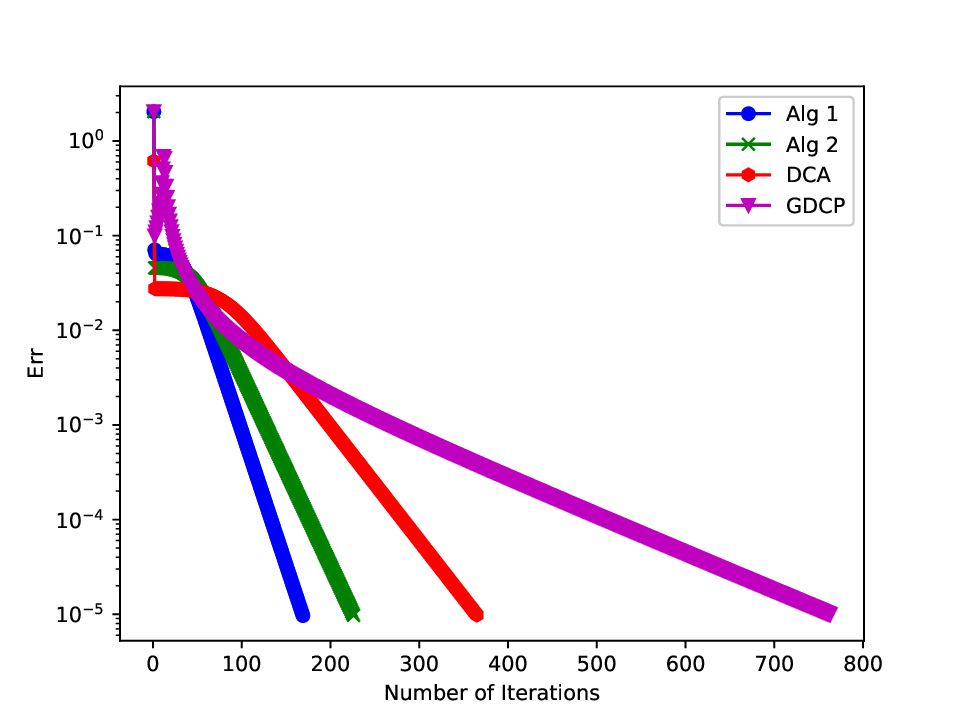}
		\includegraphics[width=.3\linewidth]{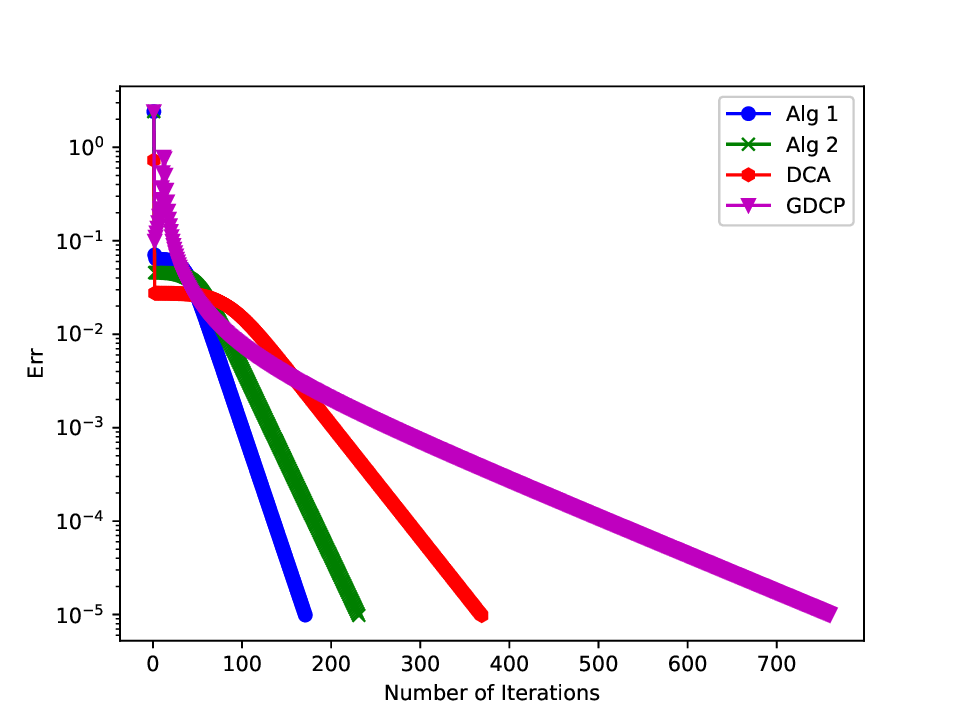}
		\includegraphics[width=.3\linewidth]{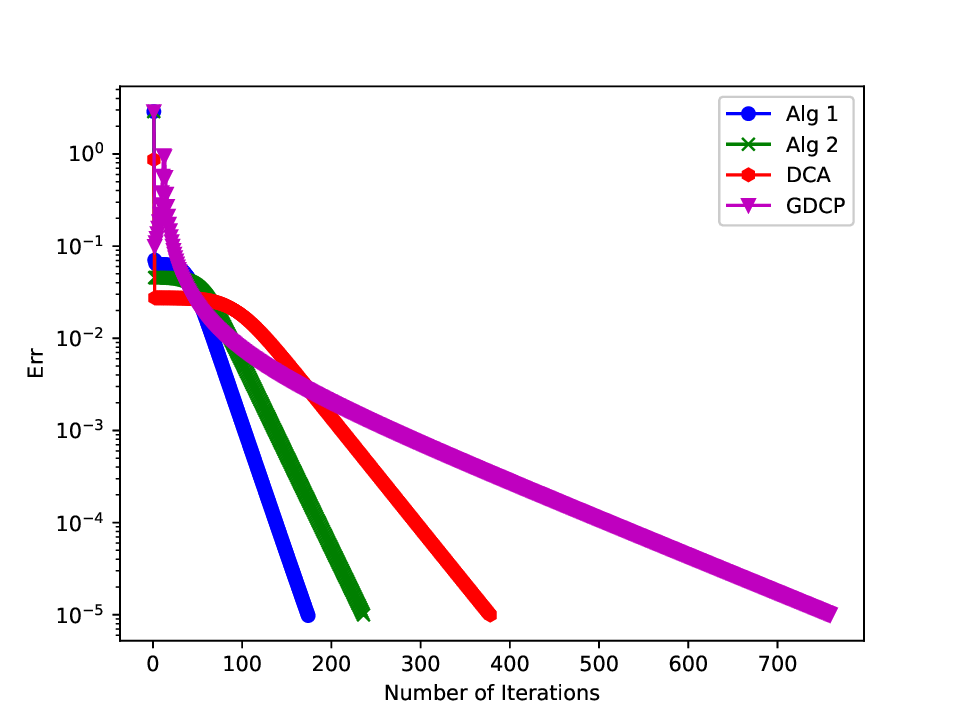}
	\end{subfigure}
	\caption{ Graphical illustration of values of Err against number of iterations by each method in Example \ref{ex2}. Instance 1: (100,50); Instance 2: (250,128), Instance 3: (521,304), Instance 4: (700, 500), Instance 5: (1000,700) and Instance 6: (1500,1000).} \label{fig_example2}
\end{figure}

The numerical results show that efficiency and accuracy of the proposed algorithms. Both proposed methods consistently have the best performance than the comparing methods in terms of number of iterations and time of execution irrespective of the size of the data size. This is a desirable result which give credence to the importance of the proposed methods.


\subsection{Support vector machines (SVM) with L1 regularization}

Given  a dataset $\{(x_{i},y_{i})\}_{i=1}^{n}$ where $x_{i}$ represents the number of the features and $y_{i}$ is the corresponding label $(+1~\mbox{or}~-1)$ of the $i$-th sample, the primal form of the SVM optimization model can be written as
\begin{equation}
	\underset{w,b}{\mbox{minimize}}~\frac{1}{2}\|w\|_{2}^2 + C \sum_{i=1}^{n}~ \max \big(0,1 - y_{i}(w^\top x_{i}+b)\big) + \lambda \|w\|_{1}
\end{equation}
where
$w$ is the weight of vector, $b$ is the bias term, $\lambda$ is the regularization parameter for the L1 penalty, $C$ is the regularization parameter for the hinge loss function. The first term of the model represents a convex function which deals with sparsity in the model, the second term is the hinge loss function penalizing misclassification and the third term is the L1 regularization term dealing with large weights. The objective function is a good example of a generalized DC programming problem written as
\begin{equation*}
	\min_{w,b} ~ f(w,b) + g(w) - h(w),
\end{equation*}
where $f(w,b) = \|w\|_{2}^2 + C\sum_{i=1}^{n} \max (0, 1- y_{i}(w^\top x_{i} + b)),$ $g(w) = \lambda \|w\|_{1}$ and $h(w) = \frac{1}{2}\|w\|_{2}^2$.

\noindent The SVM was introduced by Vapnik \cite{Cortes,Vapnik1} as a kernel-based model used for classification and regression tasks in machine learning. Its exceptional generalization ability, optimal solution, and discriminative power have garnered significant attention from the data mining, pattern recognition, and machine learning communities in recent times. SVM has been used as a powerful tool for addressing real-world binary classification problems. Researches have demonstrated the superiority of SVMs over alternative supervised learning techniques \cite{Ukey,Huang,Wang}. Due to their robust theoretical foundations and remarkable generalization capabilities, SVMs have risen to prominence as one of the most widely adopted classification methods in recent times. A detailed theoretical explanation and formulation of the SVM can be found in, for instance, \cite{Vapnik1,Bishop,Murphy,hastie2001elements}.

\subsubsection*{Related works}
Considerable research works have been dedicated to exploring the applications of DC programming across various domains including operational research, machine learning, and economics. An intriguing application of DC programming was presented in the 2006 paper of Argiou et al \cite{Argiou} titled "A DC programming algorithm for kernel selection". The author discussed a greedy algorithm aimed at learning kernels from a convex hull comprising basic kernels. While this methodology had been previously introduced, it was confined to a finite set of basic kernels. Additionally, the authors noted that the applicability of their approach was constrained by the non-convex nature of a critical maximization step involved in the learning process. However, they discovered that the optimization problem could be reformulated as a DC program without solving the problem. Another noteworthy paper discussed in this direction is the paper by Thi et al \cite{Thi}. The authors introduced an innovative application of DC programming within the context of SVM algorithms. Their focus lay in the optimal selection of representative features from data, a pivotal task in enhancing the efficiency and interpretability of machine learning models. The authors equate this problem to minimizing a zero norm function over step-k feature vectors. The zero-norm function counts the number of non-zero elements in the feature vector, essentially quantifying the sparsity of the selected features. This innovative approach opens up new avenues for enhancing the interpretability and performance of machine learning models in various applications. In the paper \cite{Thi3}, the authors leveraged DC decomposition techniques to address a challenging optimization problem within the context of machine learning. Specifically, they employed a DC decomposition-based algorithm known as DCA (DC Algorithm) to efficiently find local minima of the objective function. This algorithm iteratively updates the solution by solving convex subproblems and performing a DC decomposition-based update step, converging towards a local minimum of the objective function. The authors applied this approach to ten datasets, some of which were particularly sparse. In the thesis \cite{Nguyen}, the author conducted a comprehensive survey focusing on the unified DC programming method for feature selection within the framework of semi-supervised SVM and multiclass SVM. The primary algorithm emphasized in the paper is based on the subgradient optimality principle. The unified DC programming method discussed in the thesis aims to identify the most relevant subset of features from high-dimensional data while optimizing the performance of the SVM classifier, and demonstrating its applicability and efficacy across different problem domains. In the more recent thesis by Ho \cite{Ho}, the author delved into the application of DC  programming for learning a kernel tailored to a specific supervised learning task. Notably, what sets this approach apart from others in the literature is its capability to handle an infinite set of basic kernels. Other related results can be found in \cite{Thi2,Yao}.

\subsubsection*{Experimental setup}
We begin by describing the datasets used in the experiments. We consider the following two datasets which are source from the UCI machine learning database and Kaggle:
\begin{itemize}
	\item Dataset1: Bank-Note-Authentication: \url{https://archive.ics.uci.edu/dataset/267/banknote+authentication}
	\item Dataset2: Creditcard: \url{https://www.kaggle.com/datasets/mlg-ulb/creditcardfraud}
\end{itemize}
Dataset1 contains 1,372 rows with 5 columns consisting of 4 features and 1 target attribute/class classifying whether a given banknote is authentic or not. The data were extracted from images that were taken from genuine and forged back note specimen and has been used in many research works in machine learning. The target class contains two variables; namely, 0 which represents genuine note and 1 which represents fake note. More so, the data contains a balance ratio of both classes which is 55:45 (genuine:fake) and has no missing value. Hence, there is no need for a data cleaning purpose, which means that, we implemented the data directly in our experiment. Details of the dataset can be found in Table \ref{dataset1}.

\noindent Dataset2 contains transaction made by credit cards in September 2013 by European cardholders within two days. It contains 234,807 transaction out of which 492 are fraud which indicates a highly imbalanced dataset. Hence, we carried out a feature engineering task to avoid wrong classification of the model due to the imbalance in the dataset. We used the undersampling technique by reducing the number of samples in the majority class to match the number of samples in the minority class. This helps to balance the dataset and makes it less skewed towards the majority class. Then we used the 'resample' function from scikit-learn to randomly selects a subset of samples from the majority class without replacement, ensuring that each sample is selected only once. The number of samples selected is equal to the number of samples in the minority class, ensuring a balance class distribution. This is then used to form a new dataset which is used for our experiment. Details of the original imbalanced dataset and the balance dataset after undersampling can be seen in Figure \ref{dataset2a} and \ref{dataset2b}.

In this experiments, we compare the performance of Algorithm \ref{alg2} and \ref{alg1} with the ADMM \cite{SunADMM}, DCA \cite{PhanDCA} and GDCP \cite{Chuang}. In the model, we choose the following general parameters for implementing the algorithm: $C = 1,$ $\lambda = 0.001,$ for Algorithm \ref{alg2}, \ref{alg1}, ADMM, DCA and GDCP, we choose $\theta = 0.01, \beta = 0.001, \kappa_n = 0.3, \alpha_n = \frac{1}{10(n+1)}$ and the declared parameters used in the original papers of the comparing methods. The maximum allowed iteration is 2000 and we also set the algorithms to stop if $\|x_{k+1} - x_{k} \| < 10^{-4}.$

\begin{table}
	\centering
		\caption{Description of dataset1.}\label{dataset1}
	\begin{tabular}{|l|r|r|r|r|r|}
		\hline
		&    variance &   skewness &   curtosis &    entropy &       class \\
		\hline
		count & 1372        & 1372       & 1372       & 1372       & 1372        \\
		mean  &    0.433735 &    1.92235 &    1.39763 &   -1.19166 &    0.444606 \\
		std   &    2.84276  &    5.86905 &    4.31003 &    2.10101 &    0.497103 \\
		min   &   -7.0421   &  -13.7731  &   -5.2861  &   -8.5482  &    0        \\
		25\%   &   -1.773    &   -1.7082  &   -1.57498 &   -2.41345 &    0        \\
		50\%   &    0.49618  &    2.31965 &    0.61663 &   -0.58665 &    0        \\
		75\%   &    2.82147  &    6.81462 &    3.17925 &    0.39481 &    1        \\
		max   &    6.8248   &   12.9516  &   17.9274  &    2.4495  &    1        \\
		\hline
	\end{tabular}
\end{table}

\begin{figure}[h!]
	\begin{subfigure}{1.0\textwidth}
		\centering
		\includegraphics[width=.45\linewidth]{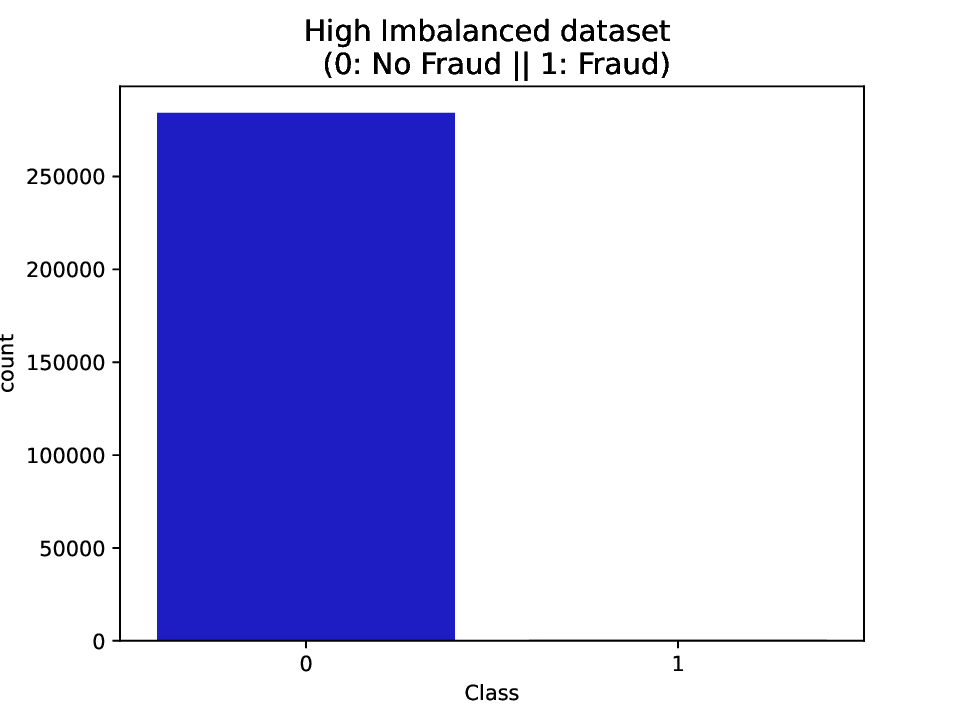}
		\includegraphics[width=.45\linewidth]{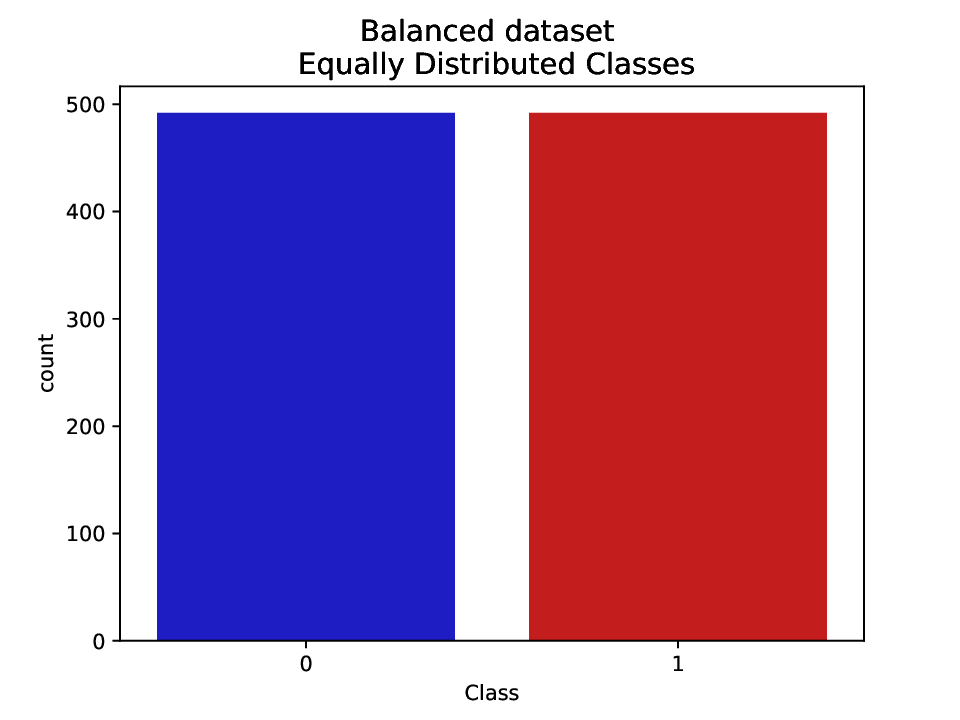}
	\end{subfigure}
	\caption{Highly imbalanced dataset2 (Left) and Balanced dataset2 after undersampling (Right)} \label{dataset2a}
\end{figure}

\begin{figure}[h!]
	\begin{subfigure}{1.0\textwidth}
		\centering
		\includegraphics[width=.47\linewidth]{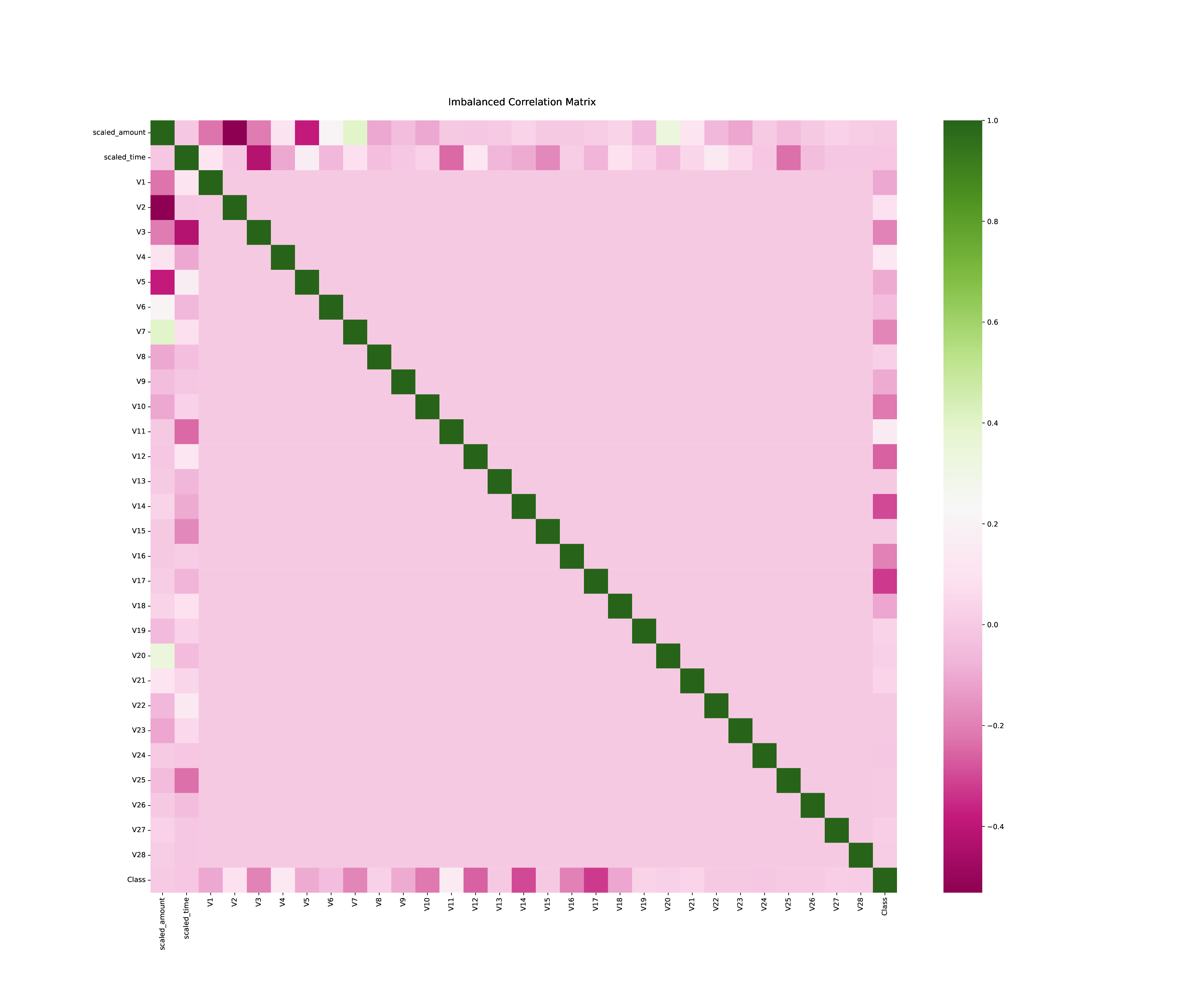}
		\includegraphics[width=.47\linewidth]{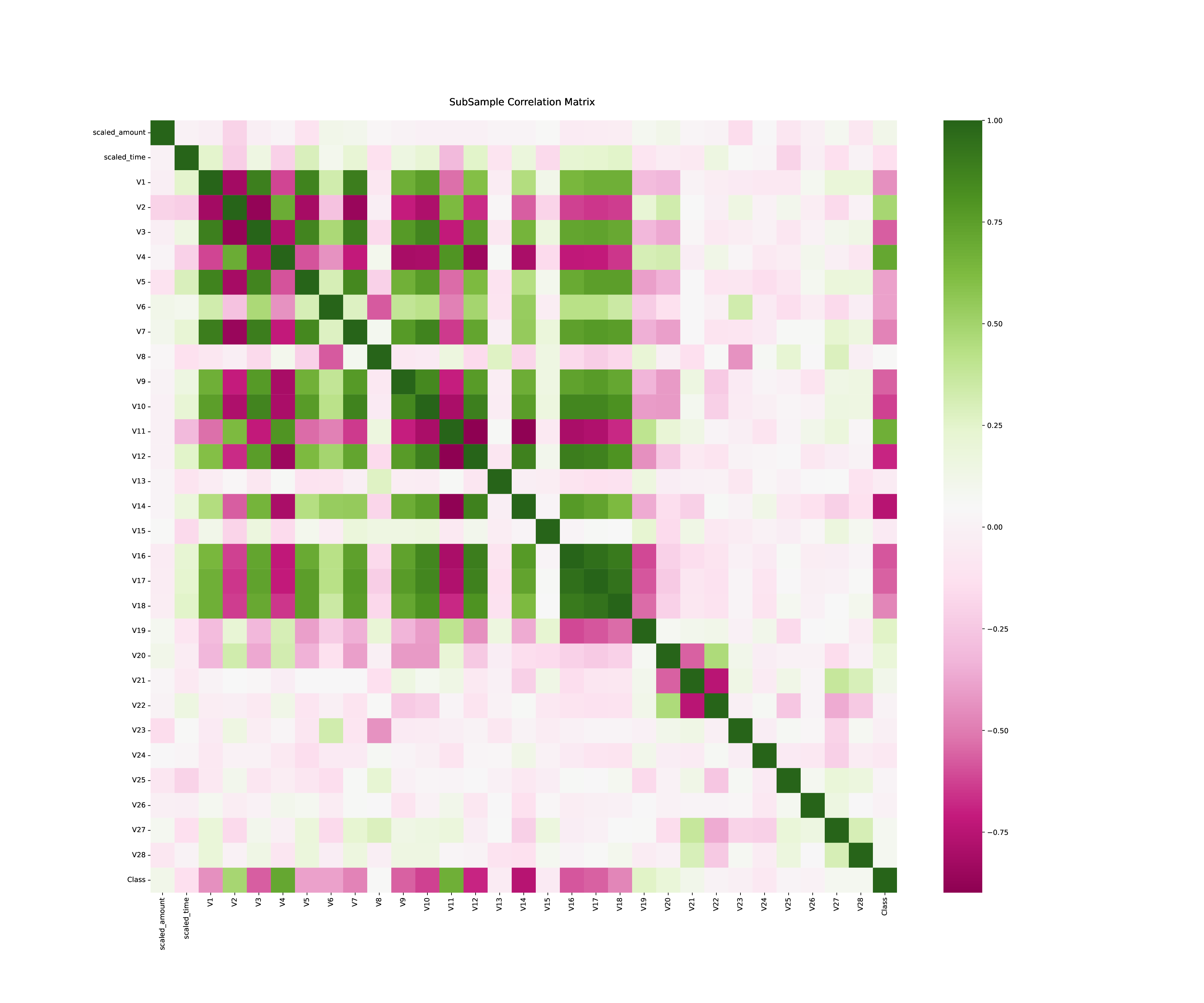}
	\end{subfigure}
	\caption{Correlation matrix of features of imbalanced dataset2 (Left) and correlation matrix of features of balanced dataset2 after undersampling (Right).} \label{dataset2b}
\end{figure}


\subsubsection*{Experimental results}
We divided the training data into four subsets with varying ratios of testing to training data: 10\% testing and 90\% training, 20\% testing and 80\% training, 30\% testing and 70\% training, and 40\% testing and 60\% training using the \texttt{train\_test\_split} function from \texttt{sklearn} in python. For dataset1, we carried out preprocssing technique to ensure that the data is standardized and suitable for training the model. For dataset2, we carried out undersampling feature selection to ensure that the model does not wrongly classify the testing data. We train each of the algorithm using the designated training data preference specified above. Consistent starting points and parameters are applied to ensure fair comparison among algorithms. The performance metrics are recorded for each algorithm to assess their effectiveness. These include accuracy, execution time, precision, MAE, MSE and RMSE scores which are defined below:
\begin{itemize}
	\item[(i)] Accuracy: This measures the proportion of correctly classified instances out of the total number of instances. It is the ratio of the number of correct predictions to the total number of predictions.
	\item[(ii)] Precision: This measures the proportion of true positive predictions among all positive predictions made by the model. It is calculated as the ratio of true positive to sum of true positive and false positive. A desirable high precision score indicates that the model makes fewer false predictions.
	\item[(iii)] Mean absolute error (MAE): This is use to measures the average absolute difference between the predicted values and the actual values, with the formula given by
	\[ MAE  = \frac{1}{n} \sum_{i=1}^{n}|y_{i}- \hat{y}_{i}|\]
	where $y_{i}$ is the actual value and $\hat{y}_{i}$ is the predicted value.
	\item[(iv)] Mean square error (MSE): This is the average squared difference between the predicted values and the actual values. It penalizes larger errors more heavily that smaller errors due to squaring and it is sensitive to outliers. It's formula is given by
	\[ MSE = \frac{1}{n}\sum_{i=1}^{n}(y_{i} - \hat{y}_{i})^2.\]
	\item[(v)] Root mean square error(RMSE): This represents the average magnitude of the errors in the same units as the target variable. It provides a measure of spread of the errors. It's formula is given by
	\[ RMSE = \sqrt{\frac{1}{n}\sum_{i=1}^{n}(y_{i}- \hat{y}_{i})^2}.\]
\end{itemize}
The experimental results for dataset1 can be found in Table \ref{data1_10} - \ref{data1_40}. We also compare the MAE, MSE and RMSE scores for each algorithm for the split dataset to observed the performance of the algorithm for predicting the value of the test data. The results are shown in Figure \ref{fig3c1} - \ref{fig3c4}. Similarly, the results of the experiments for dataset2 can be found in Table \ref{data2_10} - \ref{data2_40} and Figure \ref{fig3d1} - \ref{fig3d4}.

\begin{table}
	\centering
		\caption{Experimental results of algorithms for 10\% split of dataset1.}\label{data1_10}
	\begin{tabular}{|l|c|c|c|c|c|}
		\toprule
		 & ADMM & Alg1 & Alg2 & DCA & GDCP \\
		 \midrule
	Accuracy & 0.9855 & 0.9348 & 0.9348 & 0.9348 & 0.8768 \\
	Precision & 0.9841 & 1.0 & 1.0 & 1.0 & 1.0 \\
	Time & 0.2941 & 0.2497 & 0.2508 & 0.2826 & 0.2540 \\
	MAE & 0.0290 & 0.1304 & 0.1304 & 0.1304 & 0.2464 \\
	MSE & 0.0580 & 0.2609 & 0.2609 & 0.2609 & 0.4928 \\
	RMSE & 0.2408 & 0.5108 & 0.5108 & 0.5108 & 0.7020 \\
	\bottomrule	
	\end{tabular}
\end{table}
\begin{table}
	\centering
	\caption{Experimental results of algorithms for 20\% split of dataset1.}\label{data1_20}
	\begin{tabular}{|l|c|c|c|c|c|}
		\toprule
		& ADMM & Alg1 & Alg2 & DCA & GDCP \\
		\midrule
		Accuracy & 0.9818 & 0.9345 & 0.9345 & 0.9345 & 0.8981 \\
		Precision & 0.9747 & 0.9902 & 0.9902 & 0.9902 & 0.9592 \\
		Time & 0.2889 & 0.2599 & 0.2625 & 0.2858 & 0.2772 \\
		MAE & 0.0364 & 0.1309 & 0.1309 & 0.2464 & 0.2036 \\
		MSE & 0.0727 & 0.2618 & 0.2618 & 0.4928 & 0.4073 \\
		RMSE & 0.2697 & 0.5117 & 0.5117 & 0.7020 & 0.6382 \\
		\bottomrule	
	\end{tabular}
\end{table}

\begin{table}
	\centering
	\caption{Experimental results of algorithms for 30\% split of dataset1.}\label{data1_30}
	\begin{tabular}{|l|c|c|c|c|c|}
		\toprule
		& ADMM & Alg1 & Alg2 & DCA & GDCP \\
		\midrule
		Accuracy & 0.9836 & 0.9399 & 0.9399 & 0.9399 & 0.9290 \\
		Precision & 0.9881 & 0.9874 & 0.9874 & 0.9874 & 0.9863 \\
		Time & 0.2695 & 0.2156 & 0.2208 & 0.2635 & 0.2641 \\
		MAE & 0.0328 & 0.1202 & 0.1202 & 0.1202 & 0.1421 \\
		MSE & 0.0656 & 0.2404 & 0.2404 & 0.2404 & 0.2815 \\
		RMSE & 0.2561 & 0.4903 & 0.4903 & 0.4903 & 0.5331 \\
		\bottomrule	
	\end{tabular}
\end{table}

\begin{table}
	\centering
	\caption{Experimental results of algorithms for 40\% split of dataset1.}\label{data1_40}
	\begin{tabular}{|l|c|c|c|c|c|}
		\toprule
		& ADMM & Alg1 & Alg2 & DCA & GDCP \\
		\midrule
		Accuracy & 0.9878 & 0.9466 & 0.9466 & 0.9466 & 0.9150 \\
		Precision & 0.9752 & 0.9768 & 0.9768 & 0.9768 & 0.9808 \\
		Time & 0.5294 & 0.3004 & 0.3068 & 0.4760 & 0.3936 \\
		MAE & 0.0583 & 0.1068 & 0.1608 & 0.2036 & 0.1699 \\
		MSE & 0.1165 & 0.2136 & 0.2136 & 0.4073 & 0.3399 \\
		RMSE & 0.3413 & 0.4621 & 0.4621 & 0.6382 & 0.5829 \\
		\bottomrule	
	\end{tabular}
\end{table}

\begin{figure}[h!]
	\begin{subfigure}{1.1\textwidth}
		\centering
		\includegraphics[width=.3\linewidth]{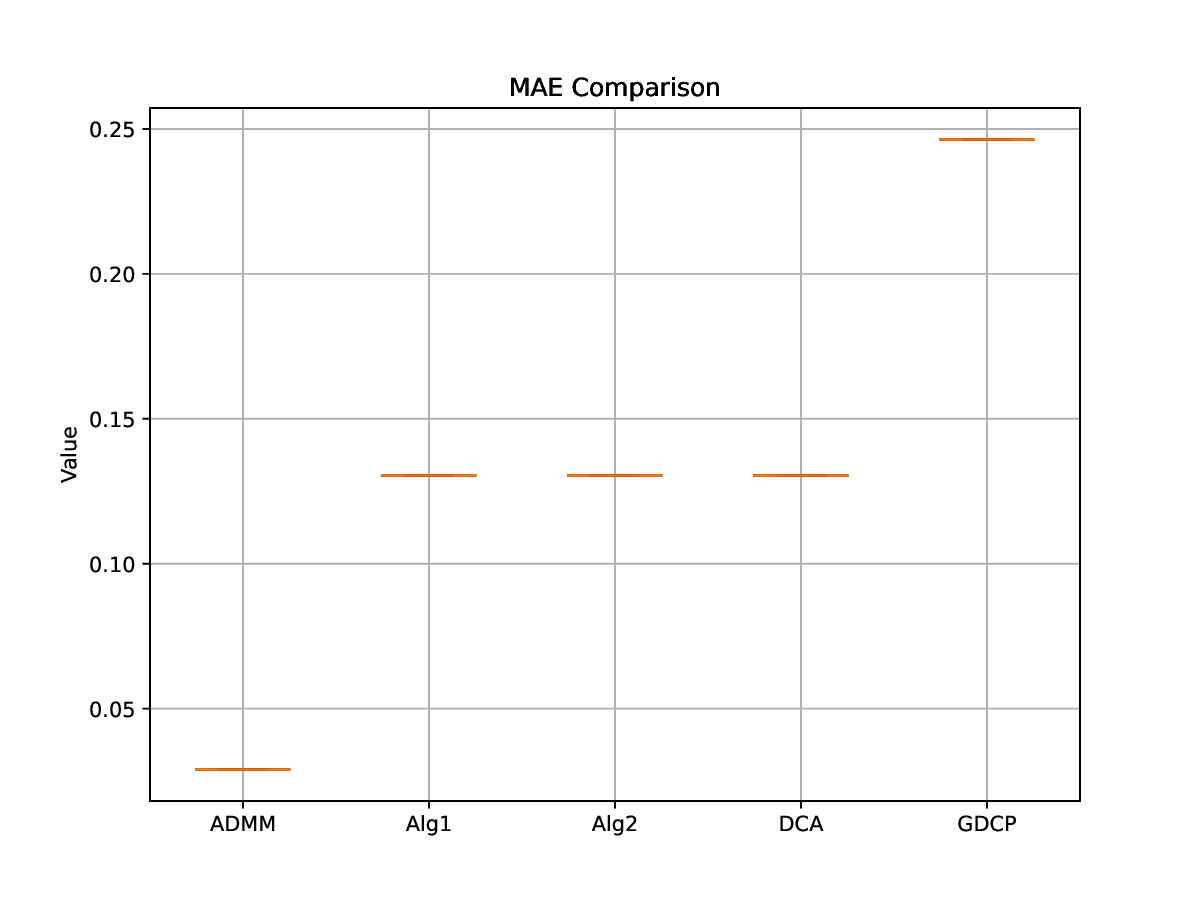}
		\includegraphics[width=.3\linewidth]{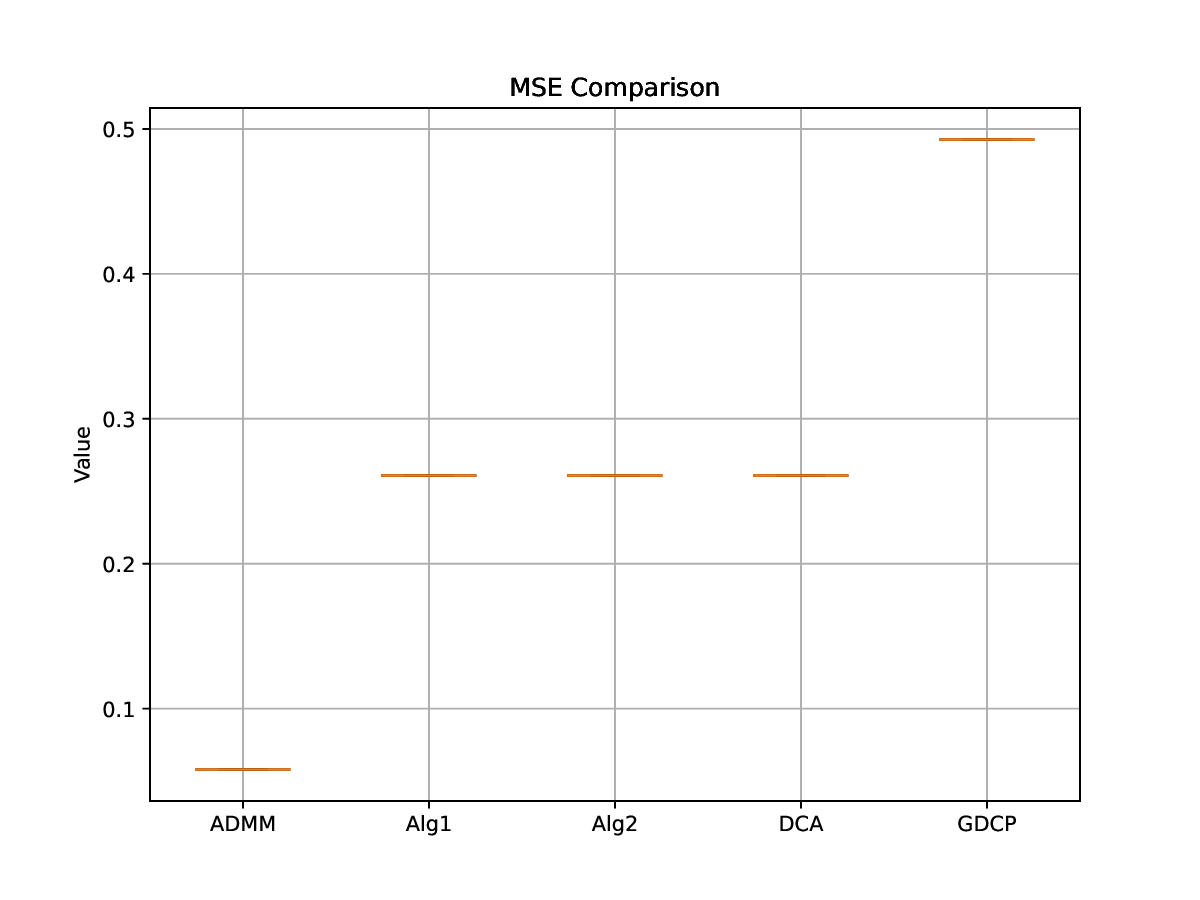}
		\includegraphics[width=.3\linewidth]{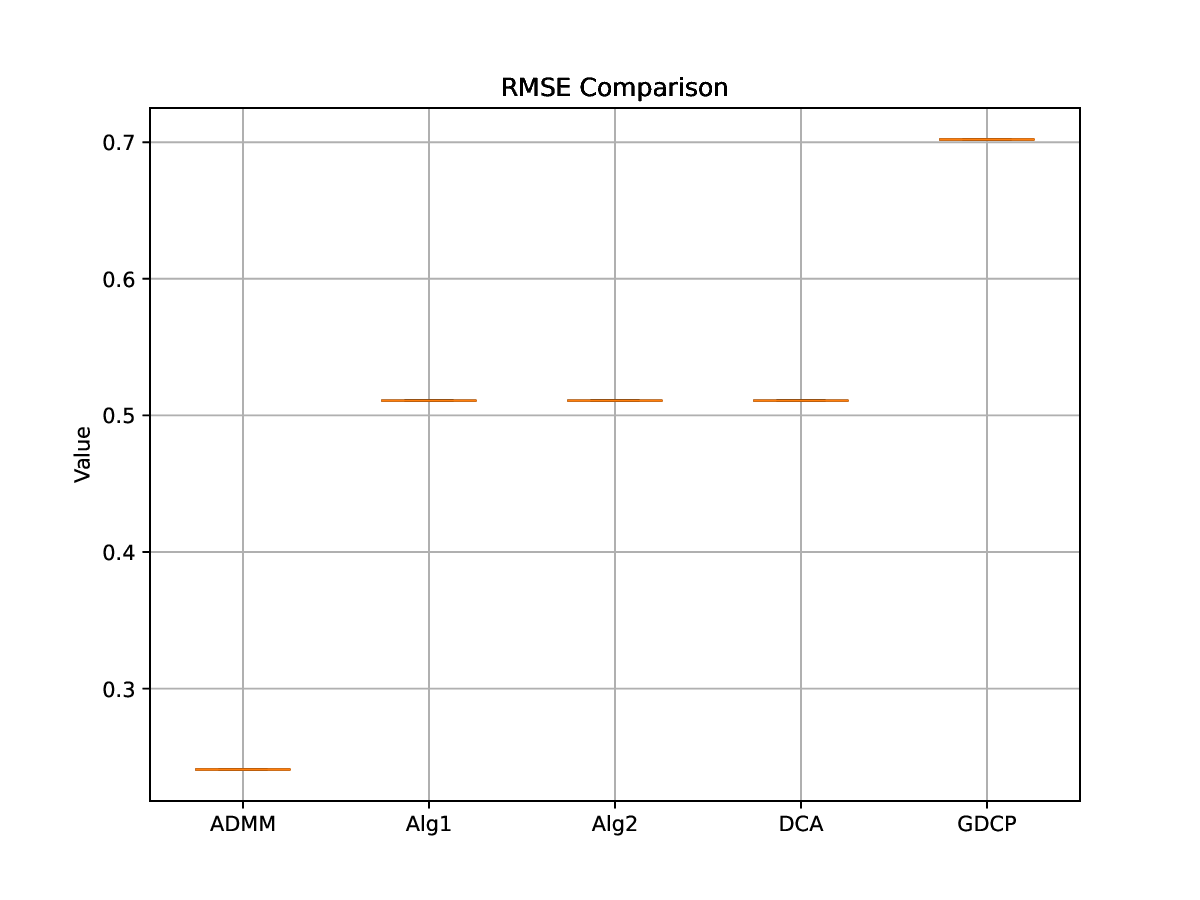}
	\end{subfigure}
	\caption{Comparison of MAE, MSE and RMSE for 10\% split of dataset1.} \label{fig3c1}
\end{figure}

\begin{figure}[h!]
	\begin{subfigure}{1.1\textwidth}
		\centering
		\includegraphics[width=.3\linewidth]{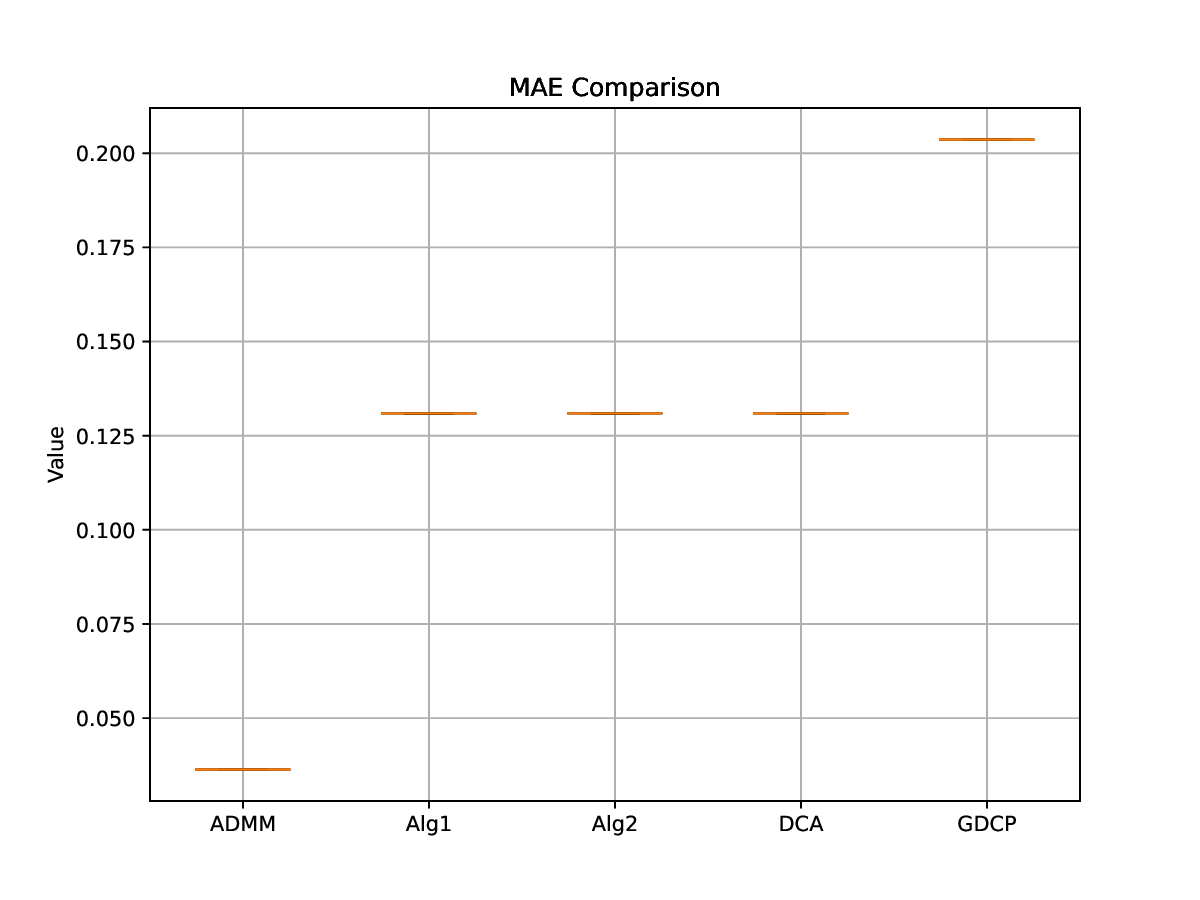}
		\includegraphics[width=.3\linewidth]{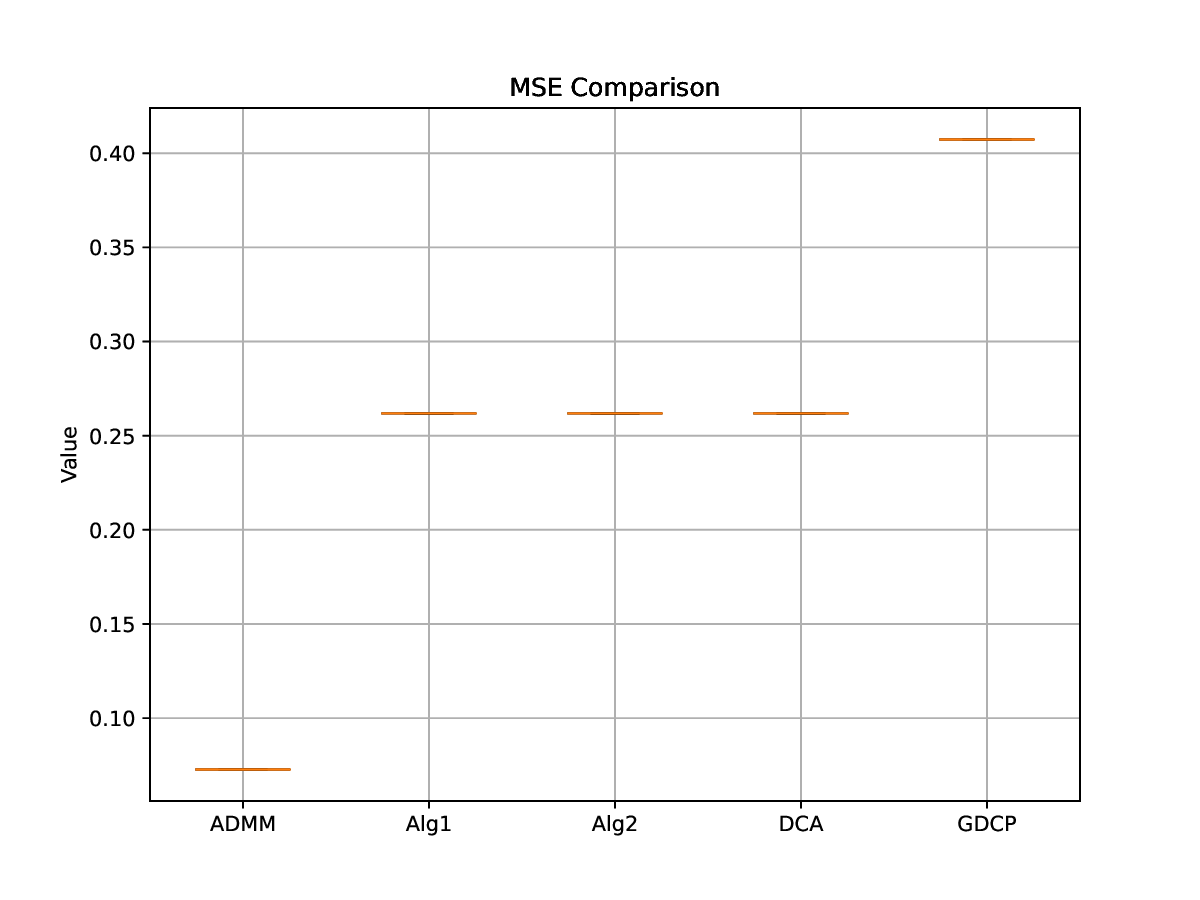}
		\includegraphics[width=.3\linewidth]{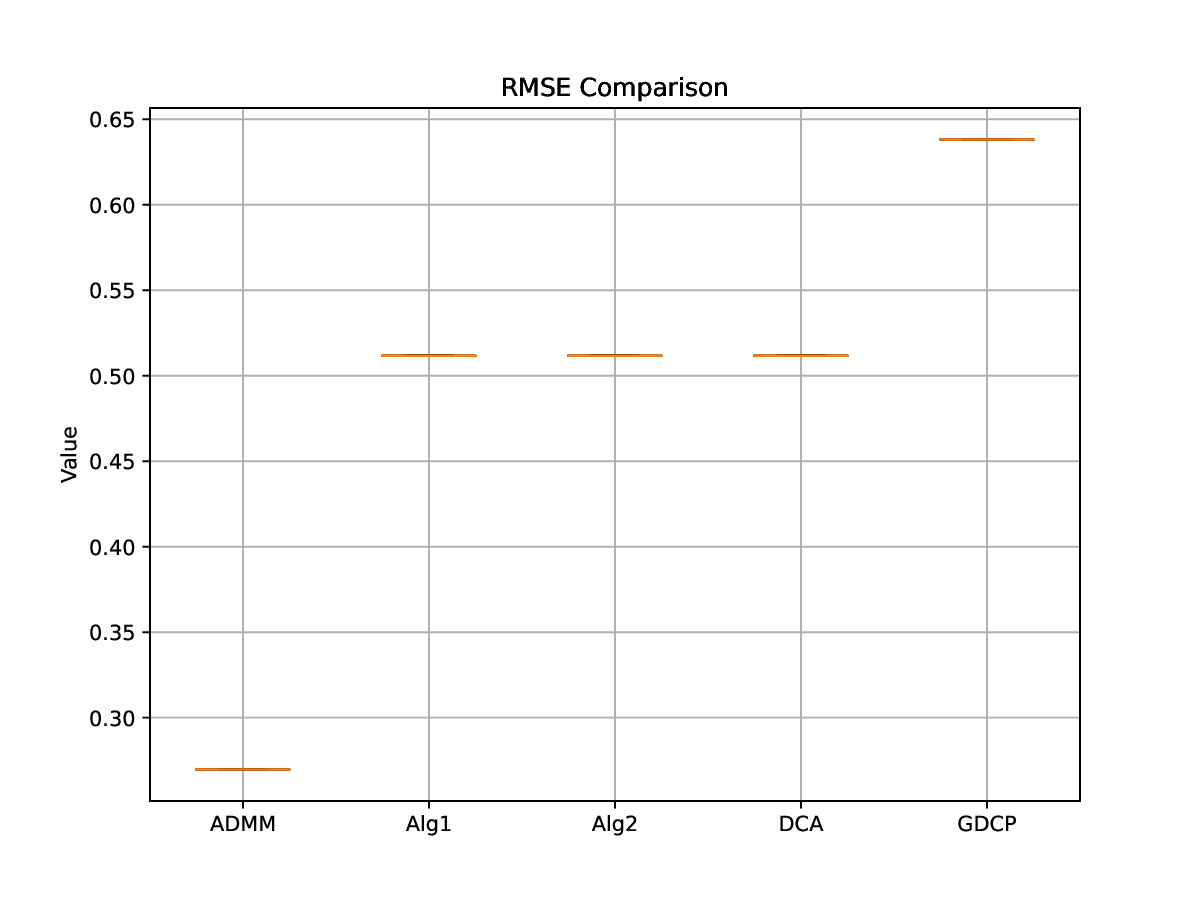}
	\end{subfigure}
	\caption{Comparison of MAE, MSE and RMSE for 20\% split of dataset1.} \label{fig3c2}
\end{figure}

\begin{figure}[h!]
	\begin{subfigure}{1.1\textwidth}
		\centering
		\includegraphics[width=.3\linewidth]{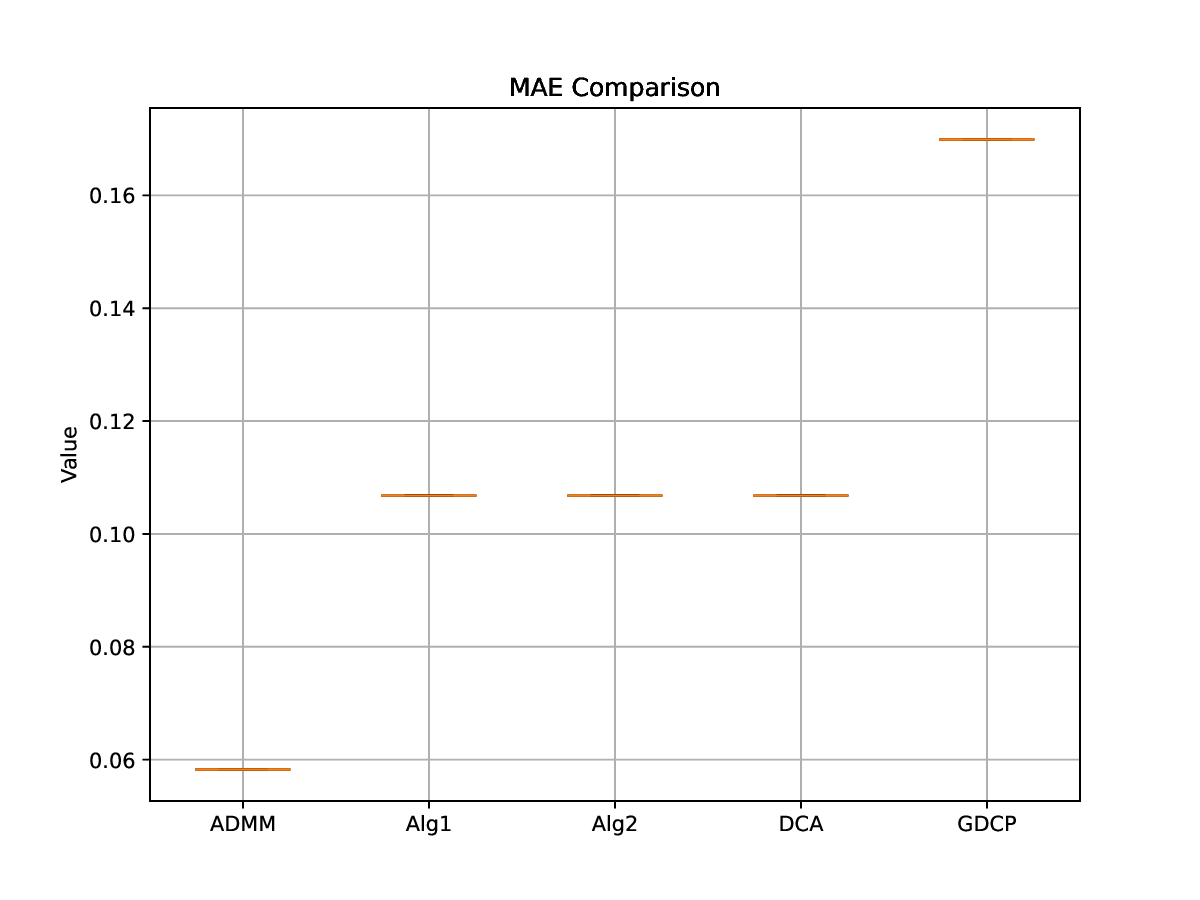}
		\includegraphics[width=.3\linewidth]{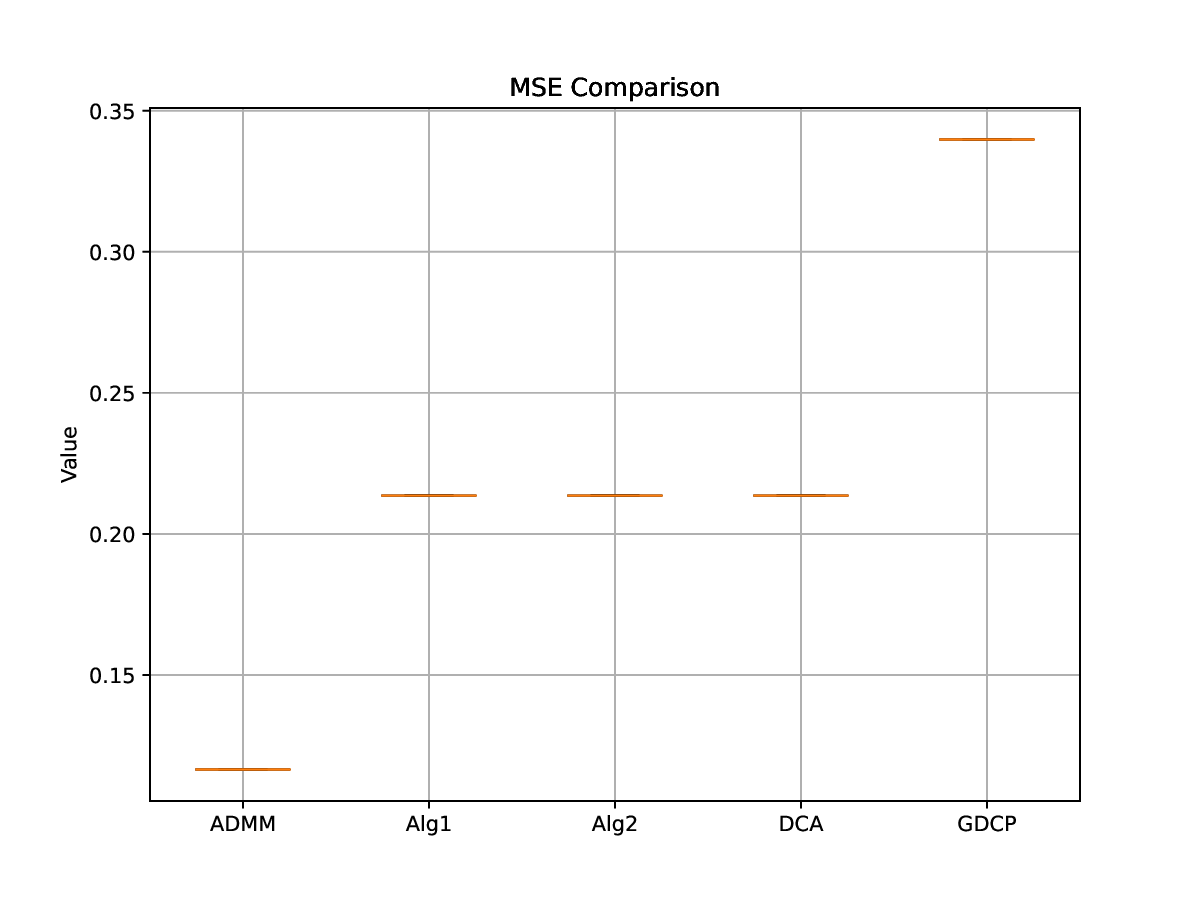}
		\includegraphics[width=.3\linewidth]{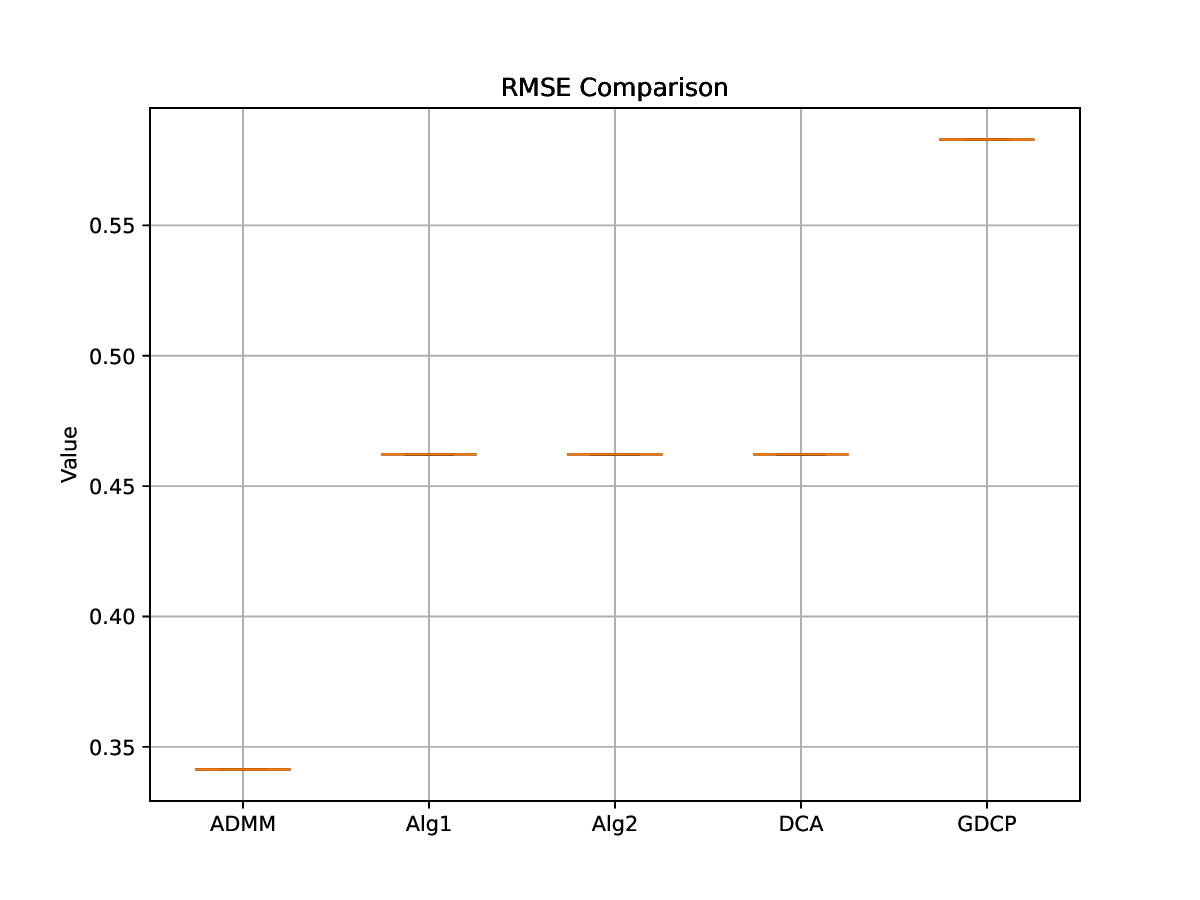}
	\end{subfigure}
	\caption{Comparison of MAE, MSE and RMSE for 30\% split of dataset1.} \label{fig3c3}
\end{figure}

\begin{figure}[h!]
	\begin{subfigure}{1.1\textwidth}
		\centering
		\includegraphics[width=.3\linewidth]{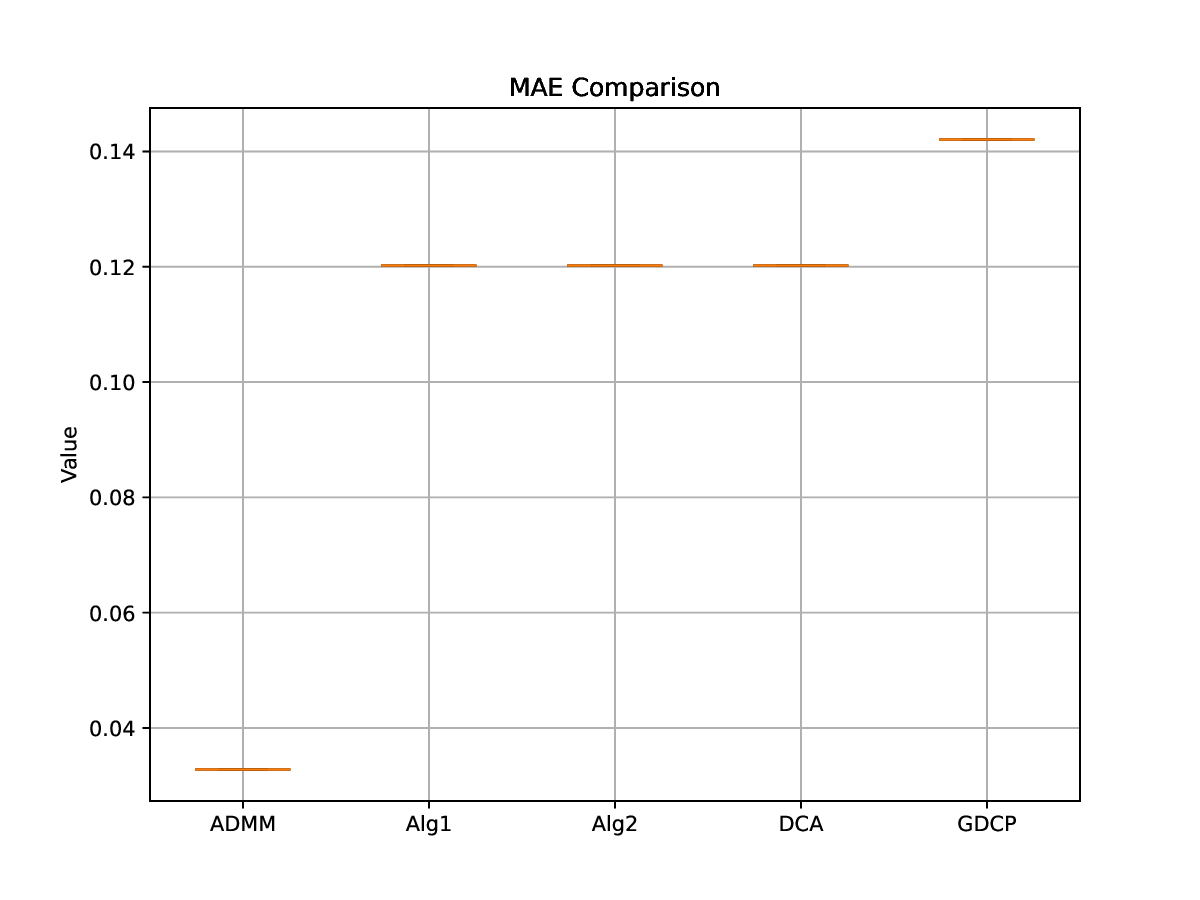}
		\includegraphics[width=.3\linewidth]{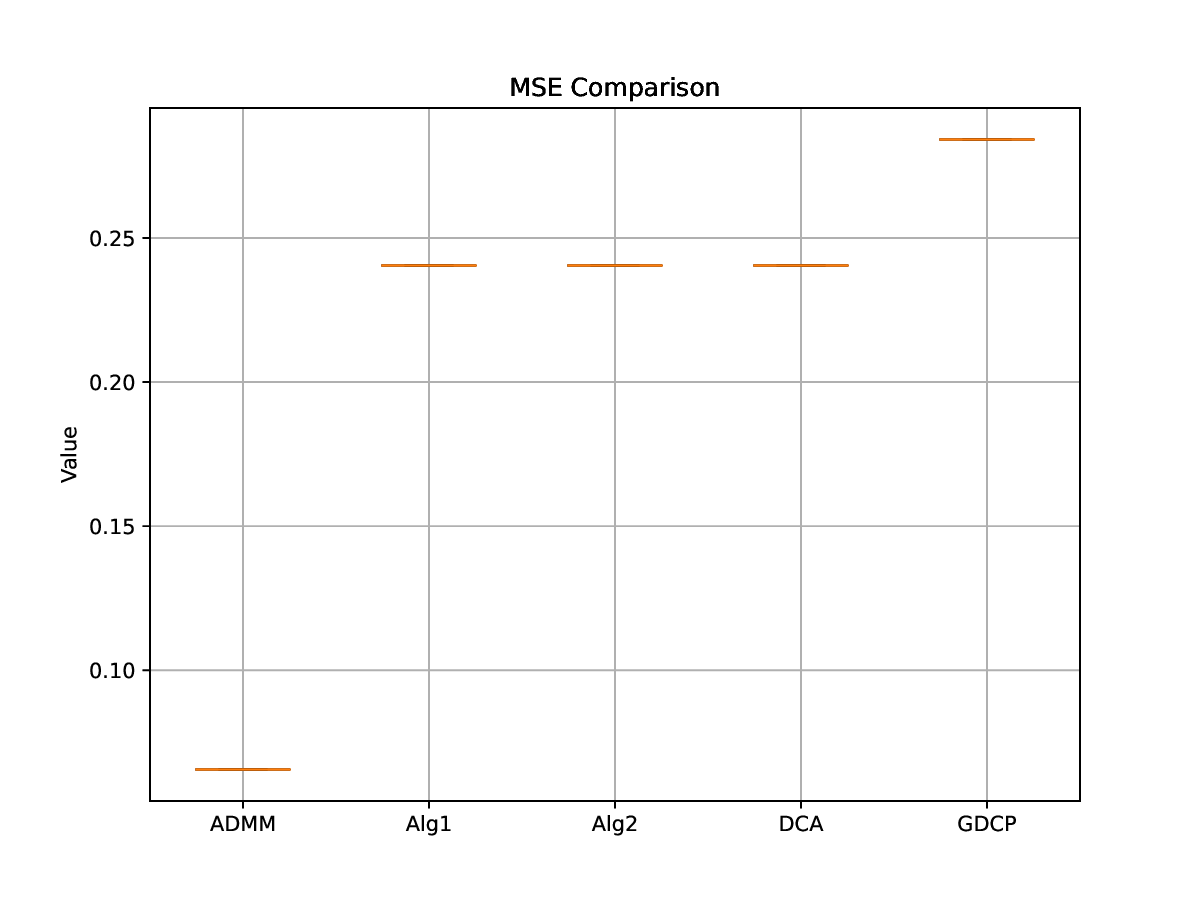}
		\includegraphics[width=.3\linewidth]{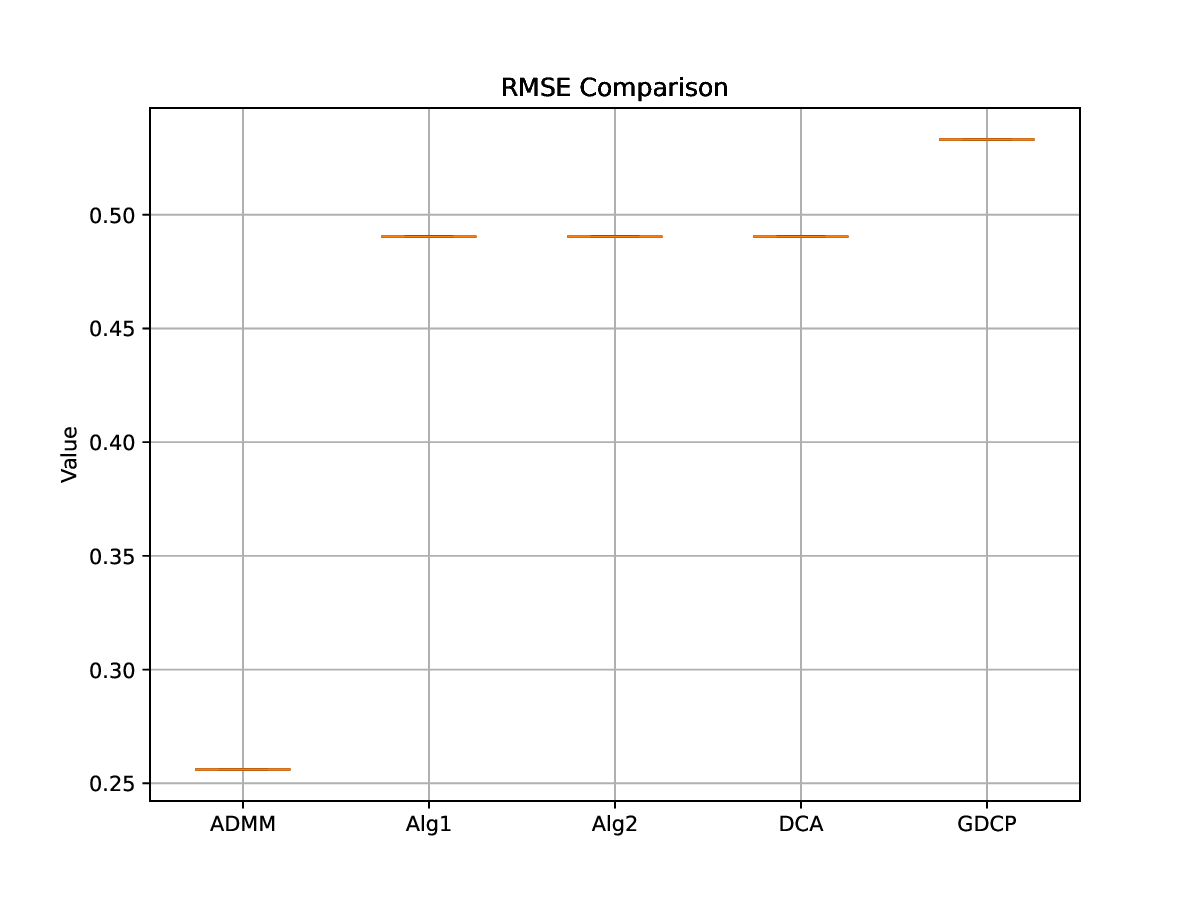}
	\end{subfigure}
	\caption{Comparison of MAE, MSE and RMSE for 40\% split of dataset1.} \label{fig3c4}
\end{figure}

\begin{table}
	\centering
	\caption{Experimental results of algorithms for 10\% split of dataset2.}\label{data2_10}
	\begin{tabular}{|l|c|c|c|c|c|}
		\toprule
		& ADMM & Alg1 & Alg2 & DCA & GDCP \\
		\midrule
		Accuracy & 0.9368 & 0.8316 & 0.9050 & 0.9052 & 0.7473 \\
		Precision & 0.9512 & 0.7454 & 0.8696 & 0.8696 & 0.6727 \\
		Time & 2.7984 & 2.5741 & 2.2906 & 2.6751 & 2.7383 \\
		MAE & 0.1263 & 0.3368 & 0.1895 & 0.1895 & 0.5052 \\
		MSE & 0.2526 & 0.6737 & 0.3789 & 0.3789 & 1.0105 \\
		RMSE & 0.5026 & 0.8206 & 0.6156 & 0.6156 & 1.0052 \\
		\bottomrule	
	\end{tabular}
\end{table}
\begin{table}
	\centering
	\caption{Experimental results of algorithms for 20\% split of dataset2.}\label{data2_20}
	\begin{tabular}{|l|c|c|c|c|c|}
		\toprule
		& ADMM & Alg1 & Alg2 & DCA & GDCP \\
		\midrule
		Accuracy & 0.9316 & 0.9211 & 0.8684 & 0.8474 & 0.8632 \\
		Precision & 0.9468 & 0.9271 & 0.8158 & 0.7931 & 0.8142 \\
		Time & 2.8183 & 2.5693 & 2.3673 & 2.4144 & 2.7594 \\
		MAE & 0.1368 & 0.1579 & 0.2632 & 0.3053 & 0.2737 \\
		MSE & 0.2737 & 0.3158 & 0.5263 & 0.6105 & 0.5474 \\
		RMSE & 0.5231 & 0.5620 & 0.7255 & 0.7813 & 0.7398 \\
		\bottomrule	
	\end{tabular}
\end{table}

\begin{table}
	\centering
	\caption{Experimental results of algorithms for 30\% split of dataset2.}\label{data2_30}
	\begin{tabular}{|l|c|c|c|c|c|}
		\toprule
		& ADMM & Alg1 & Alg2 & DCA & GDCP \\
		\midrule
		Accuracy & 0.9087 & 0.9018 & 0.9298 & 0.8596 & 0.7895 \\
		Precision & 0.8868 & 0.9116 & 0.9640 & 0.8471 & 0.9298 \\
		Time & 2.8389 & 2.0428 & 2.3072 & 2.3861 & 2.5579 \\
		MAE & 0.1824 & 0.1965 & 0.1404 & 0.2807 & 0.4211 \\
		MSE & 0.3649 & 0.3930 & 0.2807 & 0.5614 & 0.8421 \\
		RMSE & 0.6040 & 0.6269 & 0.5298 & 0.7493 & 0.9177 \\
		\bottomrule	
	\end{tabular}
\end{table}

\begin{table}
	\centering
	\caption{Experimental results of algorithms for 40\% split of dataset2.}\label{data2_40}
	\begin{tabular}{|l|c|c|c|c|c|}
		\toprule
		& ADMM & Alg1 & Alg2 & DCA & GDCP \\
		\midrule
		Accuracy & 0.9076 & 0.8179 & 0.8285 & 0.8258 & 0.8073 \\
		Precision & 0.9508 & 0.7963 & 0.8082 & 0.8131 & 0.7873 \\
		Time & 2.9516 & 2.5901 & 2.5424 & 2.6523 & 2.9498 \\
		MAE & 0.1847 & 0.3641 & 0.3430 & 0.3482 & 0.3852 \\
		MSE & 0.3694 & 0.7282 & 0.6861 & 0.6964 & 0.7704 \\
		RMSE & 0.6078 & 0.8533 & 0.8282 & 0.8346 & 0.88775 \\
		\bottomrule	
	\end{tabular}
\end{table}

\begin{figure}[h!]
	\begin{subfigure}{1.1\textwidth}
		\centering
		\includegraphics[width=.3\linewidth]{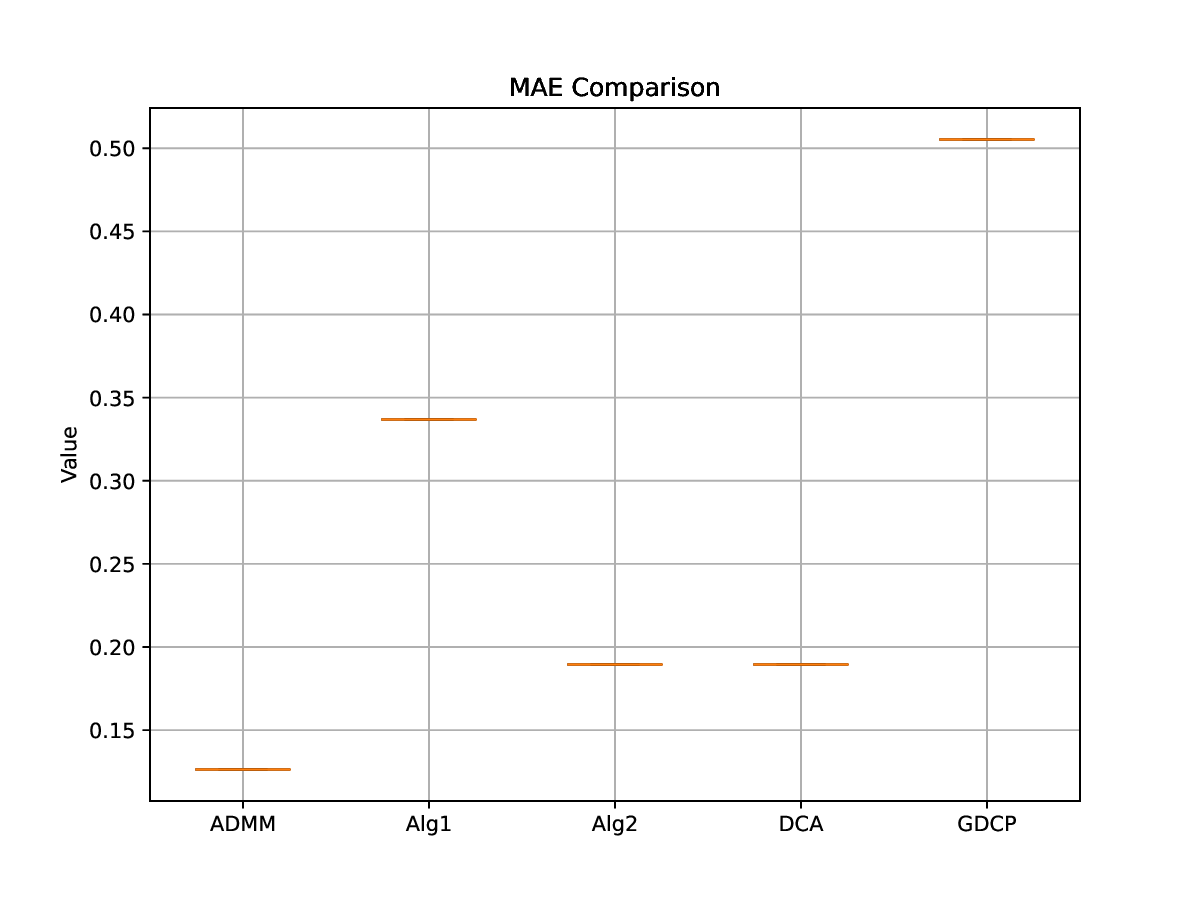}
		\includegraphics[width=.3\linewidth]{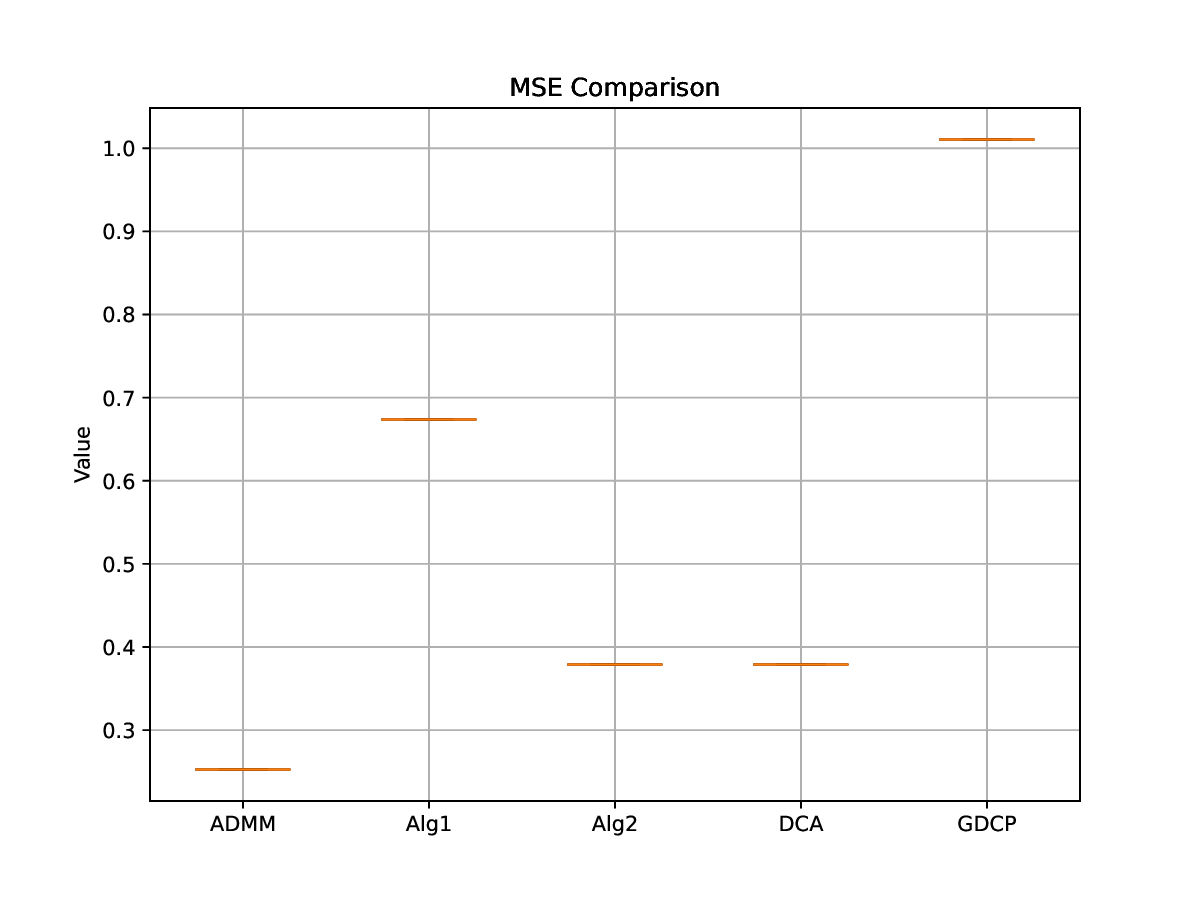}
		\includegraphics[width=.3\linewidth]{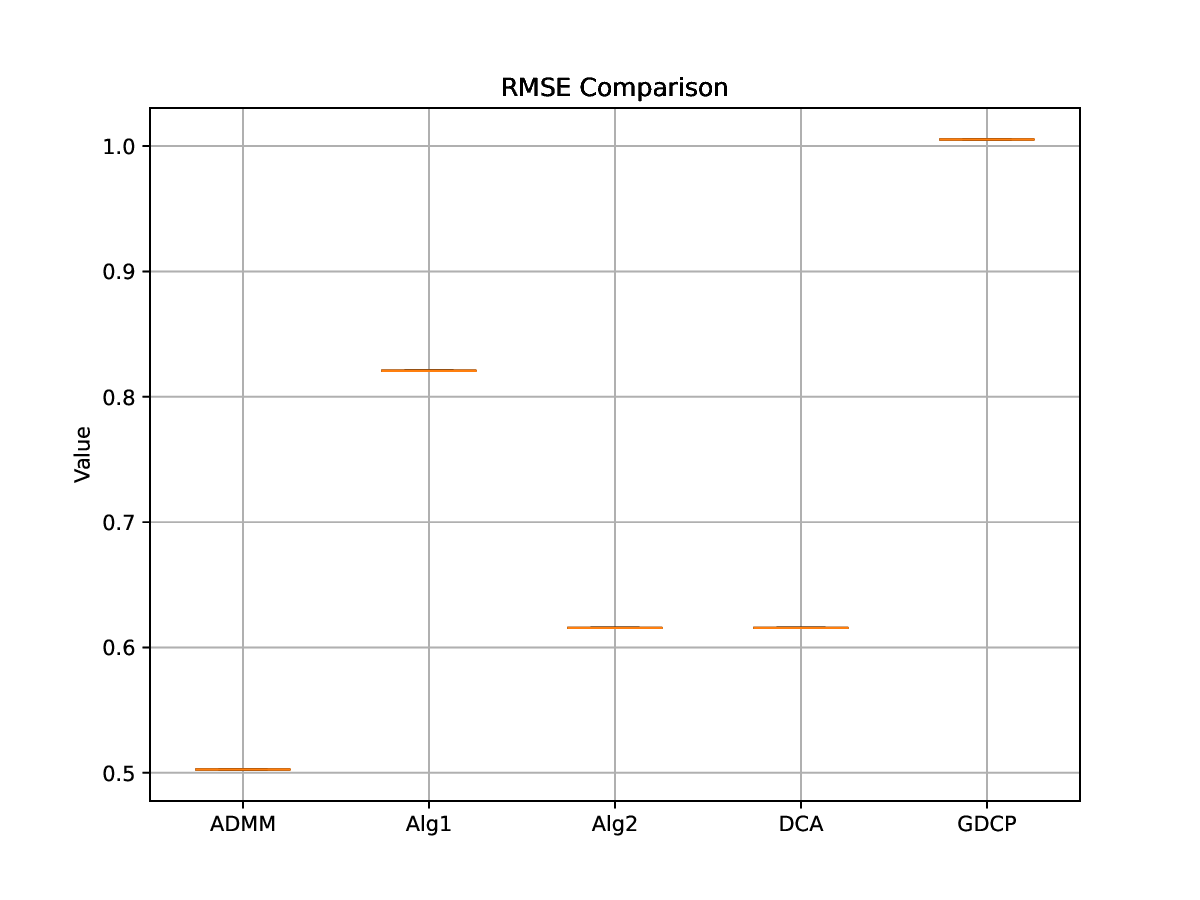}
	\end{subfigure}
	\caption{Comparison of MAE, MSE and RMSE for 10\% split of dataset2} \label{fig3d1}
\end{figure}

\begin{figure}[h!]
	\begin{subfigure}{1.1\textwidth}
		\centering
		\includegraphics[width=.3\linewidth]{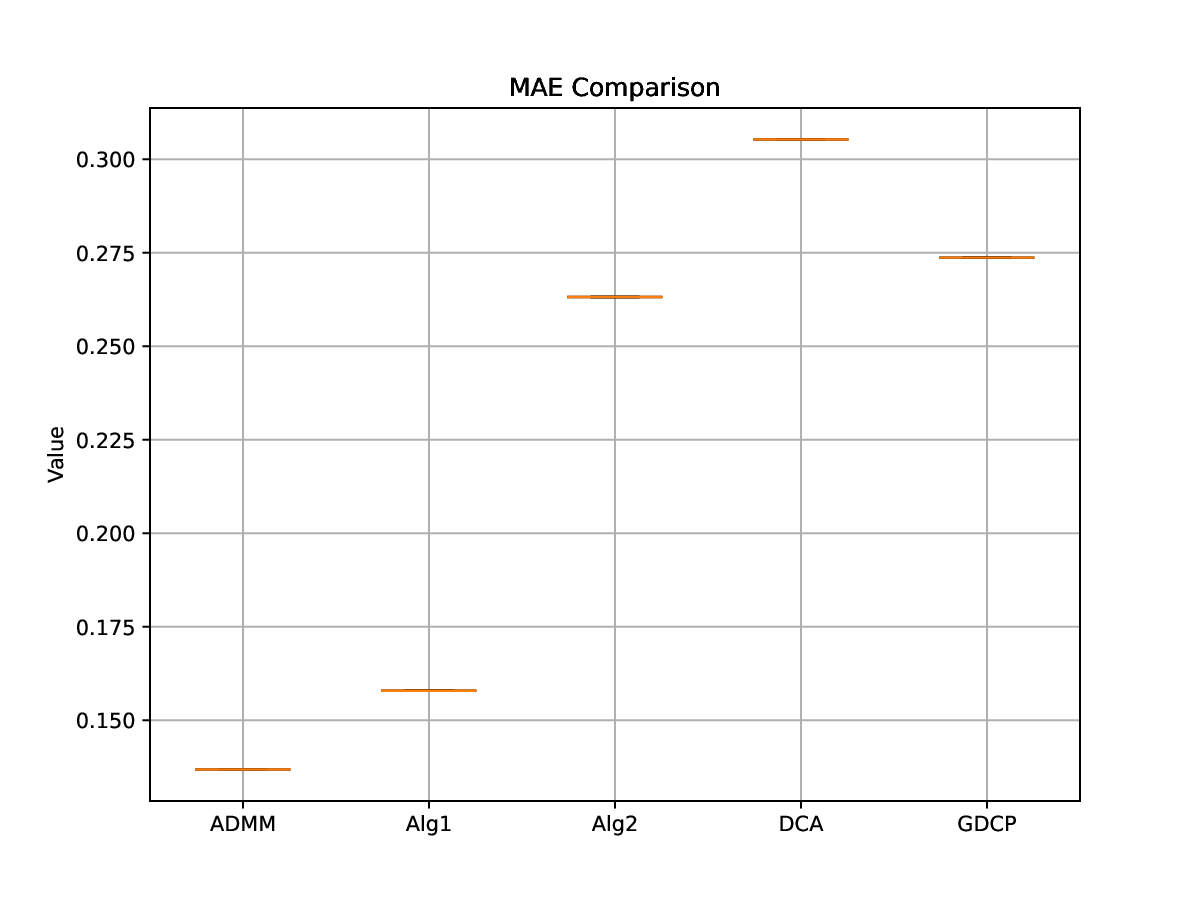}
		\includegraphics[width=.3\linewidth]{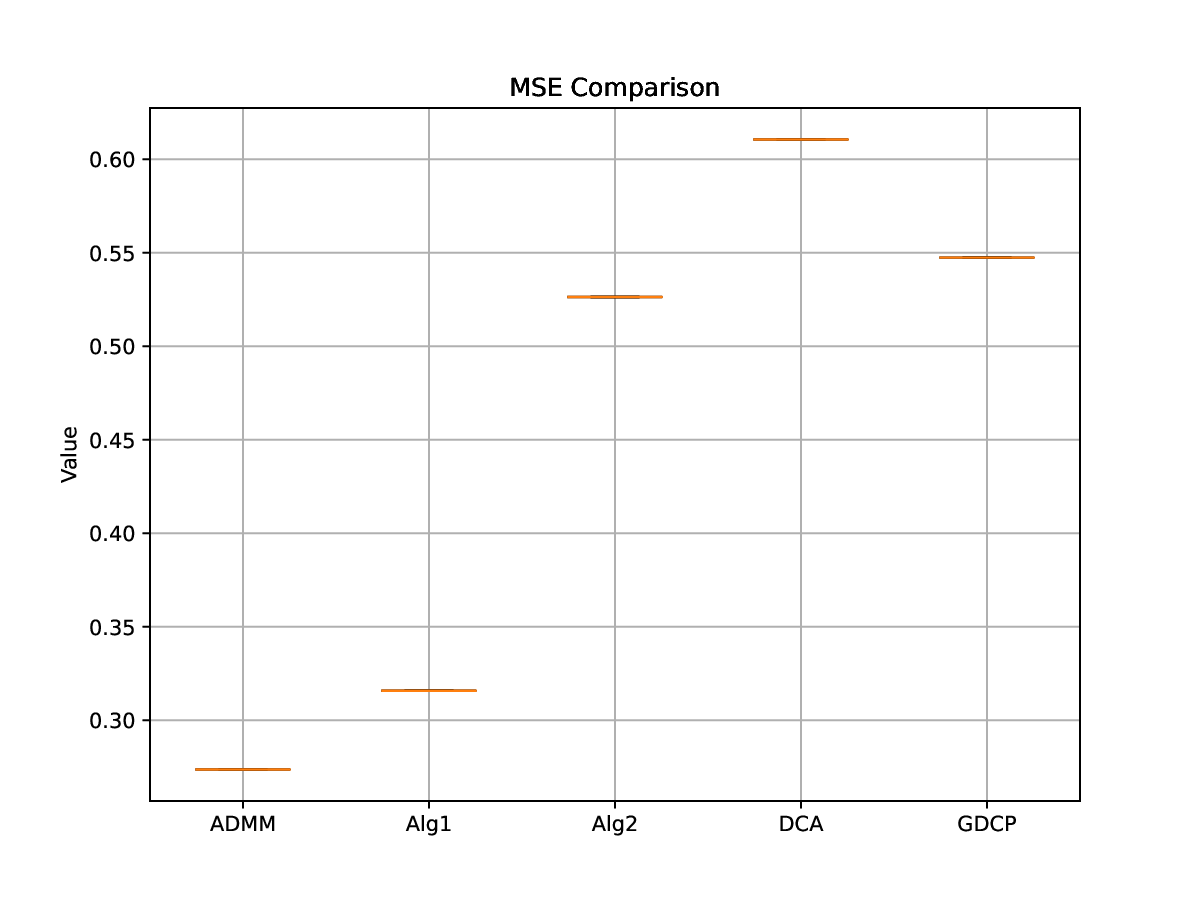}
		\includegraphics[width=.3\linewidth]{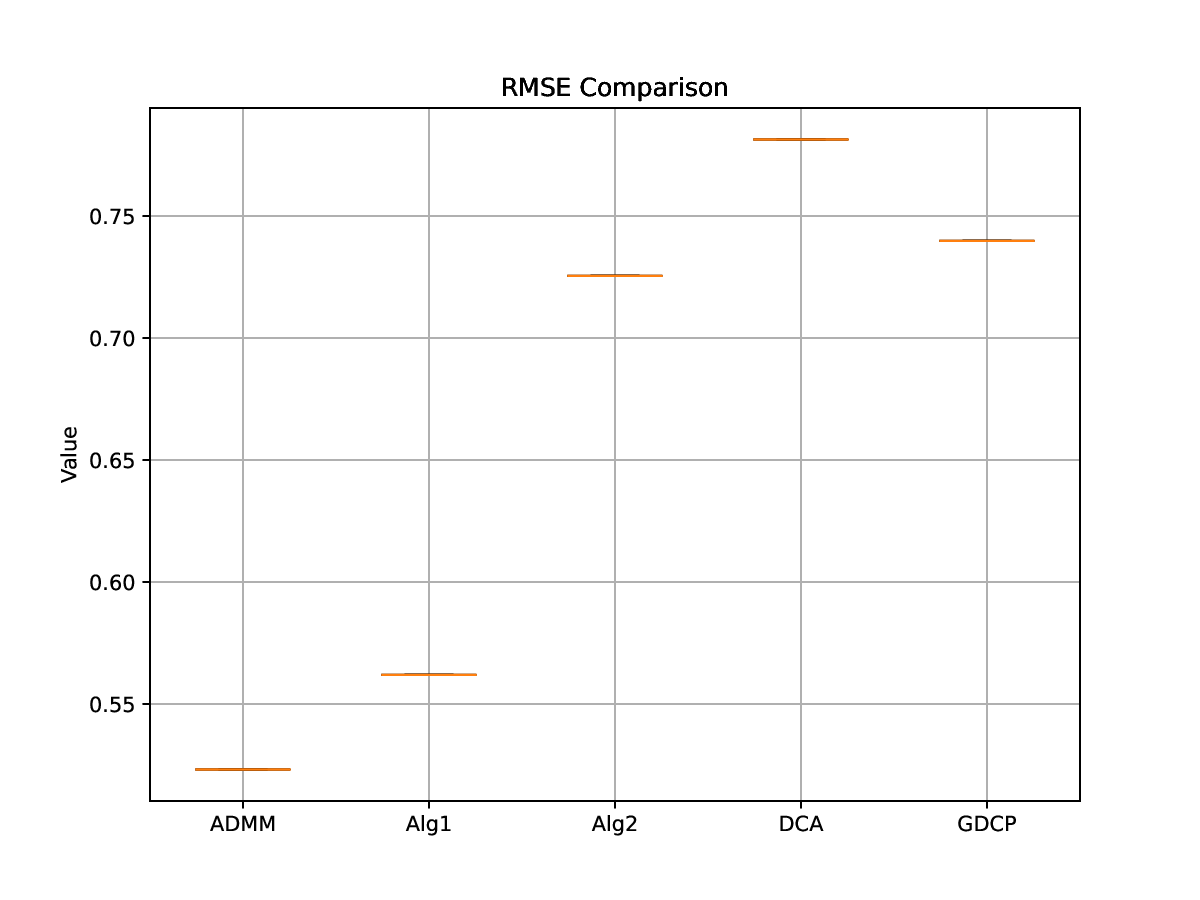}
	\end{subfigure}
	\caption{Comparison of MAE, MSE and RMSE for 20\% split of dataset2} \label{fig3d2}
\end{figure}

\begin{figure}[h!]
	\begin{subfigure}{1.1\textwidth}
		\centering
		\includegraphics[width=.3\linewidth]{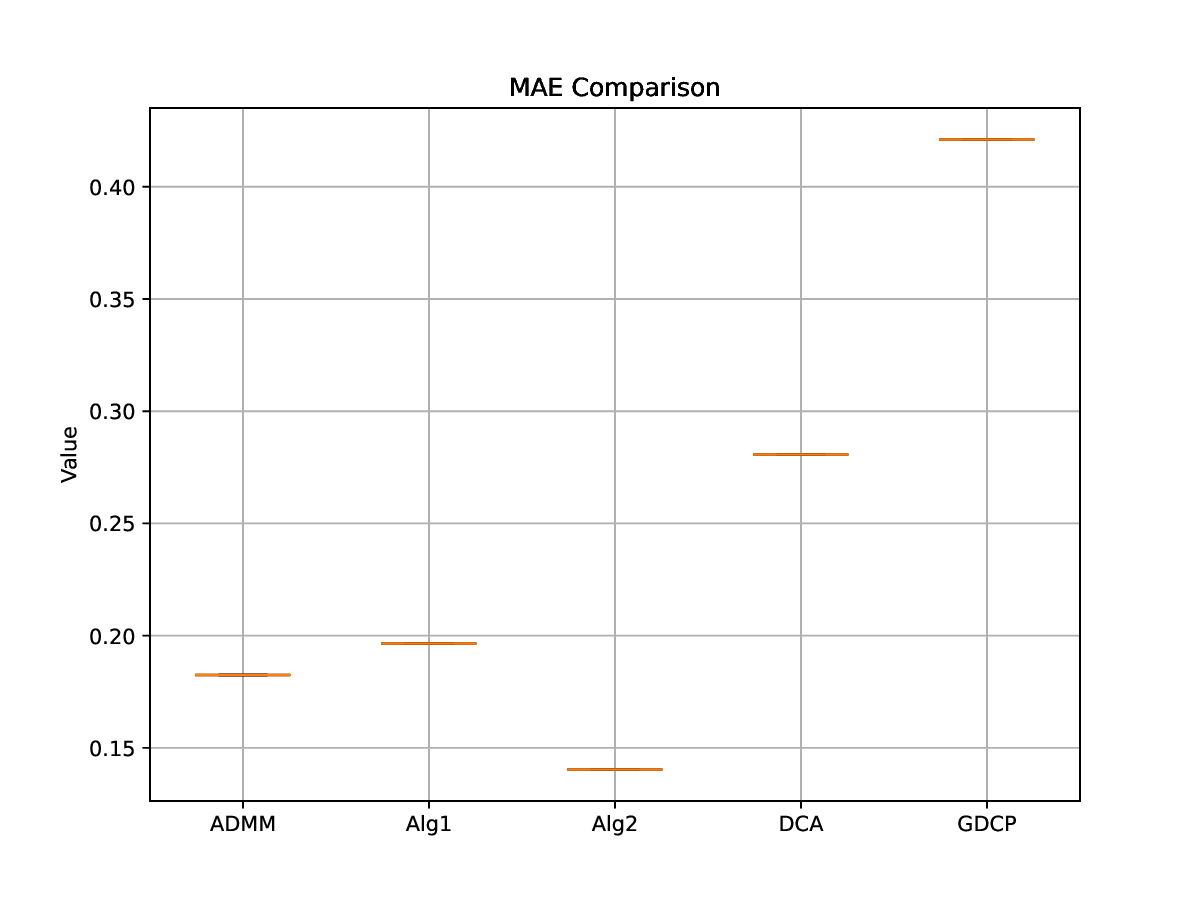}
		\includegraphics[width=.3\linewidth]{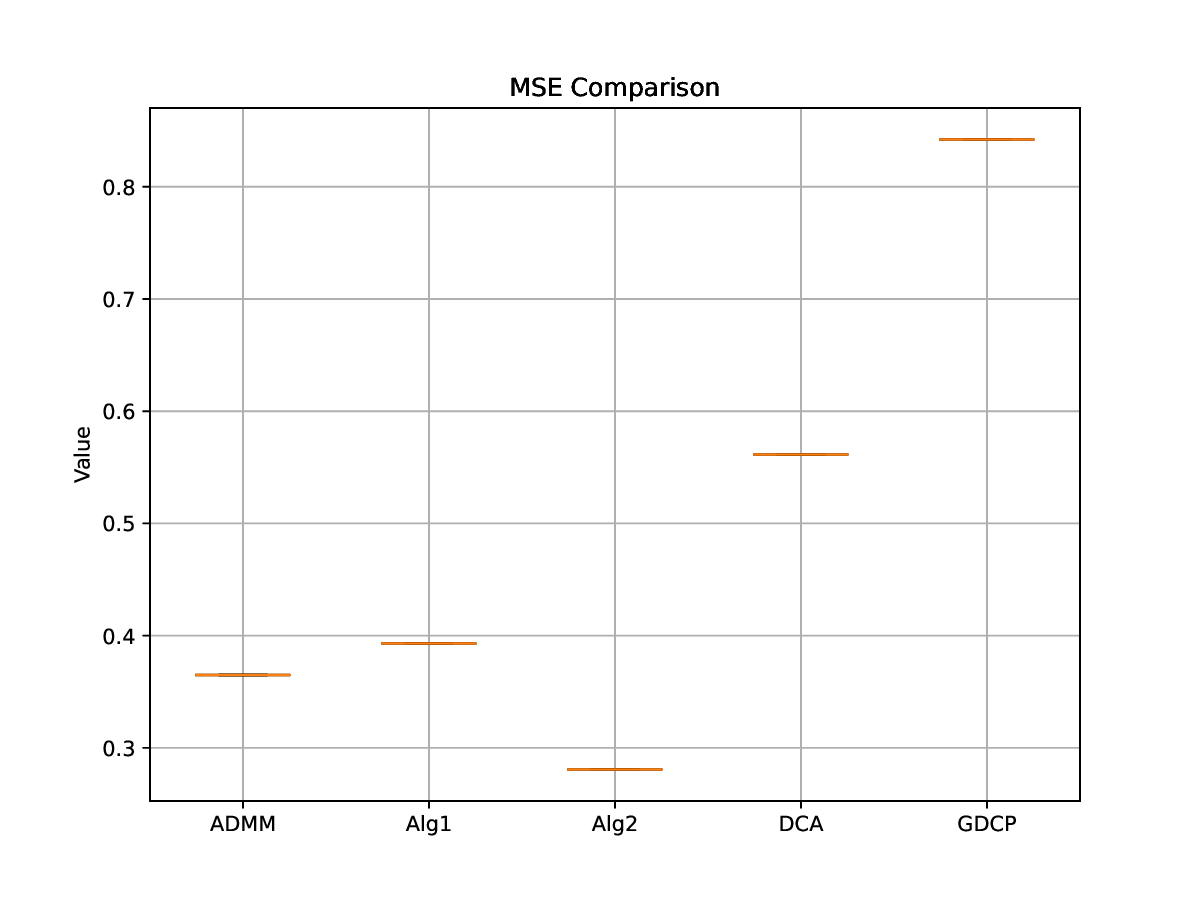}
		\includegraphics[width=.3\linewidth]{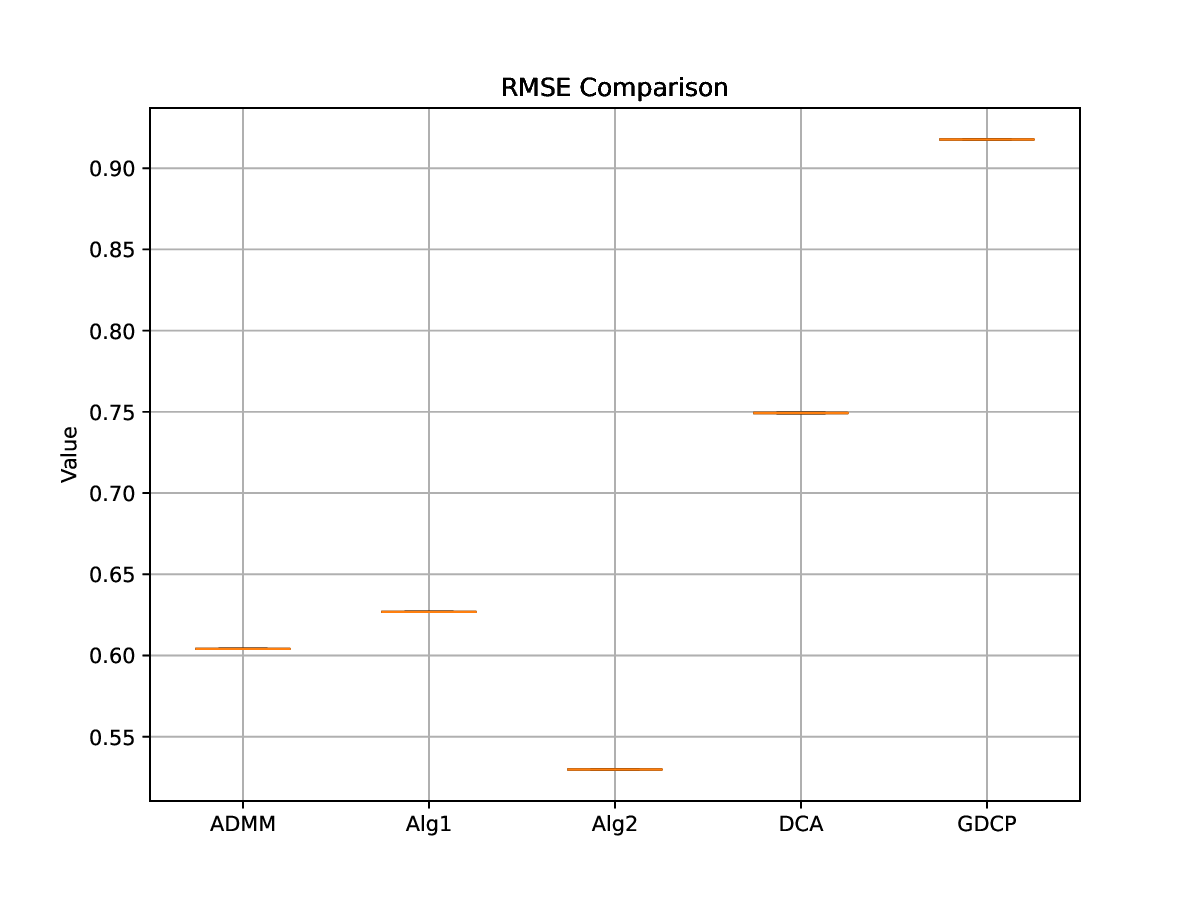}
	\end{subfigure}
	\caption{Comparison of MAE, MSE and RMSE for 30\% split of dataset2} \label{fig3d3}
\end{figure}

\begin{figure}[h!]
	\begin{subfigure}{1.1\textwidth}
		\centering
		\includegraphics[width=.3\linewidth]{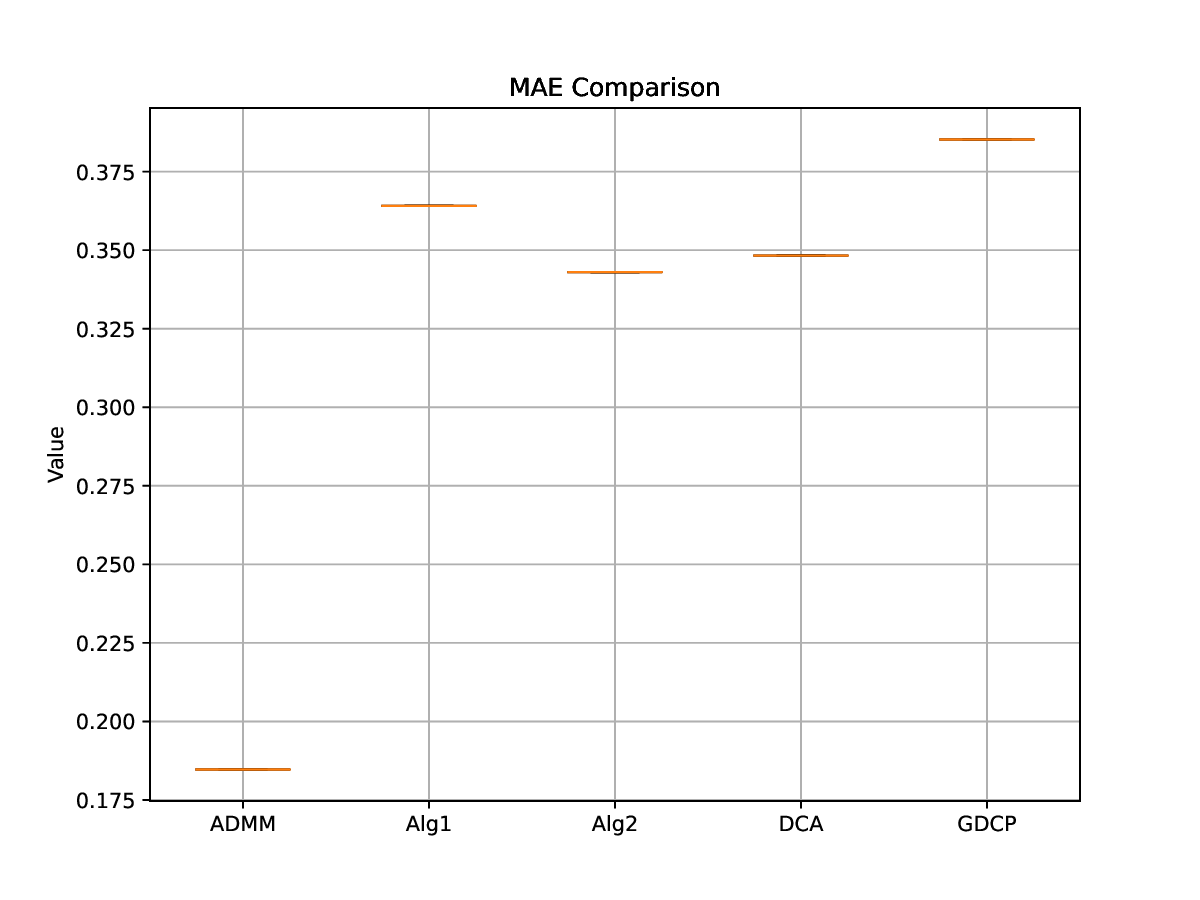}
		\includegraphics[width=.3\linewidth]{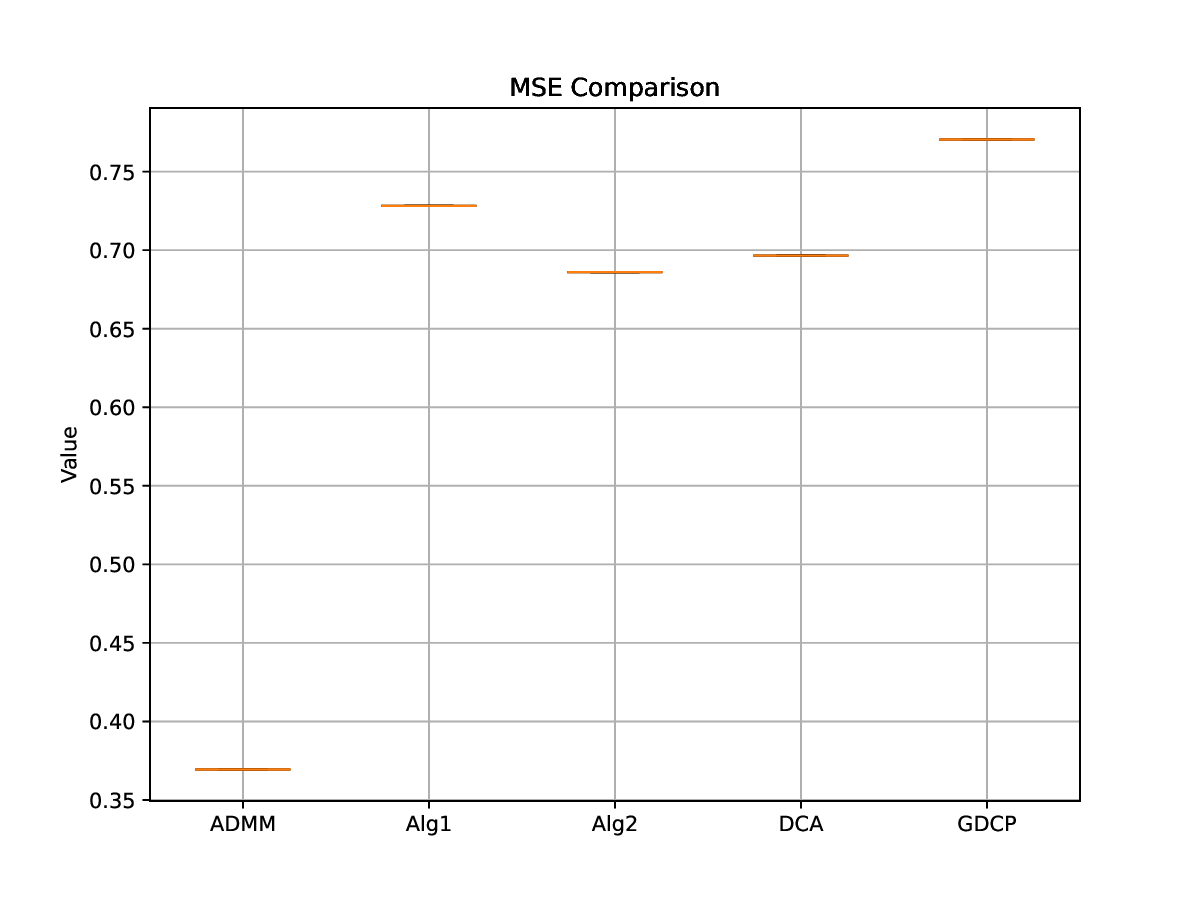}
		\includegraphics[width=.3\linewidth]{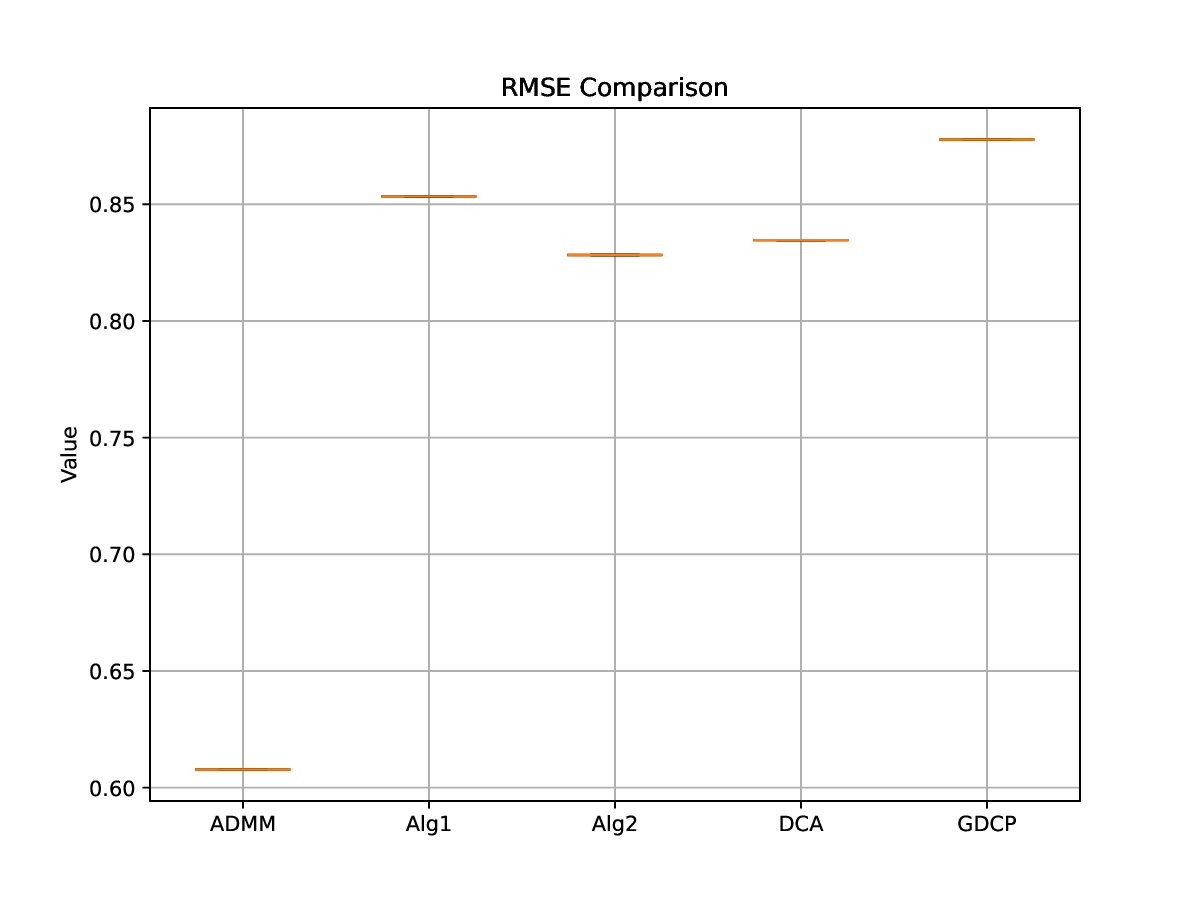}
	\end{subfigure}
	\caption{Comparison of MAE, MSE and RMSE for 40\% split of dataset2} \label{fig3d4}
\end{figure}

\subsubsection*{Discussion of results}
The results of the experiments for dataset1 reveal that Algorithm \ref{alg2} and \ref{alg1} consistently demonstrate better convergence rates compared to the other algorithms across all four splits of the dataset. While the ADMM algorithm achieves the better accuracy and precision scores, it suffers from longer execution times compared to the others. Specifically, Algorithm \ref{alg2} and \ref{alg1} exhibit better accuracy and precision scores than both the DCA and GDCP methods. Notably, the GDCP method displays the worst performance with lowest accuracy and highest MAE, MSE, and RMSE scores among all methods evaluated. These findings underscore the effectiveness of the proposed Algorithm \ref{alg2} and \ref{alg1} in achieving rapid convergence and competitive performance in accuracy and precision metrics when compared to other algorithms in the experiment. However, the ADMM method, while boasting superior accuracy and precision, lags in execution time efficiency. This behaviour is supported by findings in the literature which had shown the ADMM having poor convergence rate.

For dataset2, the proposed Algorithm \ref{alg2} and \ref{alg1} also showed to have the best rate of convergence than other methods in the experiments.
However, a nuanced observation reveals that for the (10:90) and (40:60) splits of the dataset, the accuracy and precision scores of the ADMM and DCA methods surpass those of the proposed algorithms. Despite achieving commendable accuracy and precision scores, the ADMM method continues to exhibit the poorest execution time among the evaluated methods. Furthermore, consistent with the findings in the literature, the DCA algorithm showcases promising performance, particularly in feature selection tasks. Also, the GDCP method consistently underperforms across all splits of the dataset, displaying the lowest accuracy and highest error metrics among all methods considered. These observations underscore the importance of considering various performance metrics and dataset characteristics when evaluating algorithm efficacy. While Algorithm \ref{alg2} and \ref{alg1} demonstrate strong convergence rates, the ADMM and DCA methods exhibit competitive accuracy and precision scores, but at the expense of longer execution times. Additionally, this findings align with prior research indicating the efficiency of the DCA algorithm in feature selection tasks. In conclusion, we noted that all five methods consistently yield high values of MSE, and RMSE scores. This outcomes may arise from the algorithms reaching the maximum iteration limit of 2000, where the resulting point computed is not a critical point of the optimization model.

\section{Final Remarks}\label{Sec:Final}
We have proved in this paper that new two variants of the unified Douglas-Rachford splitting algorithm converge weakly to a
critical point of a generalized DC programming in Hilbert spaces. We recovered the unified Douglas-Rachford splitting algorithm from our proposed
algorithms. Numerical illustrations from machine learning showed that our algorithms have practical implementations, are efficient, and outperform some other related algorithms for solving this class of generalized DC programming. As part of the future project, we study the accelerated versions of our proposed algorithms.


\begin{thebibliography}{99}
\bibitem{Argiou} Argyriou, A., Hauser, R.,  Micchelli, C. and M. Pontil A.: DC-programming algorithm for kernel selection. Proceedings of the 23rd International Conference on Machine Learning (ICML '06), 2006.

\bibitem{Banert}
Banert, S., Rudzusika, J., Oktem, O., Adler, J.: Accelerated forward-backward optimization using deep learning, https://arxiv.org/abs/2105.05210.

\bibitem{Bauschkebook}
 Bauschke, H.H., Combettes, P.L.: Convex Analysis and Monotone Operator Theory in
Hilbert Spaces. 2nd ed. Springer, New York, 2017.

\bibitem{Bian}
Bian, F., Zhang, X.: A three-operator splitting algorithm for nonconvex sparsity regularization, Siam J. Sci. Comput. 43, (2021).

\bibitem{Bishop} Bishop CM.: Pattern Recognition and Machine Learning. New York : Springer, 2006

\bibitem{Boyd}
 Boyd, S., Parikh, N., Chu, E., Peleato, B., Eckstein, J.: Distributed optimization and
statistical learning via the alternating direction method of multipliers. Found. Trends
Mach. Learn. 3(1), 1-122 (2011).


\bibitem{Ceng1}
 Ceng, L.C., Ansari, Q.H., Schaible, S.: Hybrid extragradient-like methods for generalized
mixed equilibrium problems, systems of generalized equilibrium problems and optimization problems. J. Glob. Optim. 53, 69-96 (2012).

\bibitem{Candes&Wakins} Cand{\`e}s, E.J. and Wakin, M.B.: {An Introduction To Compressive Sampling}, {IEEE Signal Processing Magazine}, 25, 21-30 (208).


\bibitem{Ceng2}
Ceng, L.C., Coroian, I., Qin, X., Yao, J.C.: A general viscosity implicit iterative algorithm
for split variational inclusions with hierarchical variational inequality constraints. Fixed
Point Theory 20, 469-482 (2019).

\bibitem{Ceng3}
Ceng, L.C., Li, X., Qin, X.: Parallel proximal point methods for systems of vector optimization problems on Hadamard manifolds without convexity. Optimization 69, 357-383 (2020).

\bibitem{Chuang}
 Chuang, C.S., He, H., Zhang, Z.: A unified Douglas-Rachford algorithm for generalized
DC programming. J. Glob. Optim. 82, 331-349 (2022).

\bibitem{Cortes} Cortes C and Vapnik, VN.: Support vector networks, Machine
Learning, 20, 273–297, (1995).

\bibitem{Cui1}
Cui, F., Tang, Y., Yang, Y.: An inertial three-operator splitting algorithm with applications to image inpainting. Appl. Set-Valued Anal. Optim. 1, 113-134 (2019).


\bibitem{Cui2}
Cui, H.H., Ceng, L.C.: Convergence of over-relaxed contraction-proximal point algorithm in Hilbert spaces. Optimization 66, 793-809 (2017).

\bibitem{Davis}
Davis, D., Yin, W.: A three-operator splitting scheme and its optimization applications,
Set-Valued Variat. Anal. 25, 829-858 (2017).

\bibitem{Ding}
 Ding, M., Song, X. Yu, B.: An Inexact Proximal DC Algorithm with Sieving Strategy
for Rank Constrained Least Squares Semidefinite Programming. J. Sci. Comput. 91, 75
(2022).

\bibitem{Dong}
 Dong, Q.L., Huang, J.Z., Li, X.H., Cho, Y.J., Rassias, Th.M.: MiKM: multi-step inertial
Krasnosel’skii–Mann algorithm and its applications. J. Glob. Optim. 73, 801-824 (2019).

\bibitem{Douglas}
Douglas, J., Rachford, H.H.: On the numerical solution of heat conduction problems in
two or three space variables. Trans. Am. Math. Soc. 82, 421-439 (1956).


\bibitem{Gaudioso}
 Gaudioso, M., Gorgone, E., Hiriart-Urruty, J.B.: Feature selection in SVM via polyhedral k-norm. Optim. Lett. 14, 19-36 (2020).


\bibitem{Gotoh}
Gotoh, JY, Takeda, A., Tono, K.: DC formulations and algorithms for sparse optimization problems. Math. Program. 169, 141-176 (2018).

\bibitem{Guan}
 Guan, J.L., Ceng, L.C., Hu, B.: Strong convergence theorem for split monotone variational inclusion with constraints of variational inequalities and fixed point problems. J. Inequal. Appl. 311, (2018).

 \bibitem{hastie2001elements} Hastie, T. and Tibshirani, R. and Friedman, J.H.: The Elements of Statistical Learning: Data Mining, Inference, and Prediction. 2nd ed. Springer Series in Statistics, 2001.


\bibitem{He}
He, L., Cui, Y.L., Ceng L.C., Zhao, T.Y., Wang, D.Q., Hu, H.Y.: Strong convergence
for monotone bilevel equilibria with constraints of variational inequalities and fixed points
using subgradient extragradient implicit rule. J. Inequal. Appl. 146, (2021).

\bibitem{Hu}
Hu, Z., Huang, Dong, Q.L.: A three-operator splitting algorithm with deviations for generalized DC programming. Appl. Numer. Math. 191, 62-74 (2023).

\bibitem{Huang} Huang, G.B., Mao, K.Z., Siew, C.K., Huang D.S.: Fast Modular Network Implementation for Support Vector Machines. IEEE Trans  Neural Netw., 16(6), Issue 6, p1651 (2005).

\bibitem{Latif}
Latif, A., Ceng, L.C., Al-Mezel, S.A.: Some iterative methods for convergence with  hierarchical optimization and variational inclusions. J. Nonlinear Convex Anal. 17, 735-755 (2016).

\bibitem{Le Thi}
Le Thi, H.A., Pham Dinh, T., Le, H.M., Vo, X.T.: DC approximation approaches for
sparse optimization. European J. Oper. Res. 244(1), 26-46 (2015).

\bibitem{Li}
Li, G., Yang, L., Wu, Z., Wu, G.: D.C. programming for sparse proximal support vector
machines. Inform. Sciences 547, 187-201 (2021).

\bibitem{Liu}
 Liu, T., Pong, T.K., Takeda, A.: A refined convergence analysis of pDCAe with applications to simultaneous sparse recovery and outlier detection. Comput. Optim. Appl. 73,
69-100 (2019).

\bibitem{Lou1}
 Lou, Y., Yan, M.: Fast L1-L2 Minimization via a Proximal Operator. SIAM J. Sci.
Comput. 74, 767-785 (2018).

\bibitem{Lou2}
 Lou, Y., Yin, P., He, Q., Xin, J.: Minimization of $\ell_{1-2}$ for compressed sensing. SIAM J.
Sci. Comput. 37, A536-A563 (2015).

\bibitem{Lu}
Lu, Z., Zhou, Z.: Nonmonotone enhanced proximal DC algorithms for a class of structured nonsmooth DC programming. SIAM J. Optim. 29(4), 2725-2752 (2019).

\bibitem{Murphy} Murphy, KP.: Machine Learning: A Probabilistic Perspective. MIT Press, 2012.

\bibitem{Nguyen} Nguyen, M.C.: DC programming and DCA for some classes of problems in machine learning and data mining. Other [cs.OH]. Universit\'{e} de Lorraine, 2014. English. NNT: 2014LORR0080.tel-01750803.

\bibitem{Nouiehed}
 Nouiehed, M., Pang, J.S., Razaviyayn, M.: On the pervasiveness of difference-convexity
in optimization and statistics. Math. Program. 174, 195-222 (2019).

\bibitem{Opial}
Opial. Z.:
Weak convergence of the sequence of successive approximations for
nonexpansive mappings. Bull. Am. Math. Soc. 73, 591-597 (1967).

\bibitem{PhanDCA} Pham D.T., Souad, E.B.: Algorithms for solving a class of nonconvex optimization problems. Methods
of subgradients. In: Hiriart-Urruty, J.B. (ed.) Fermat Days 85: Mathematics for Optimization. NorthHolland Mathematics Studies, vol. 129, pp. 249–271. North-Holland, Amsterdam (1986) North-Holland
Mathematics Studies, vol. 129, pp. 249–271. North-Holland, Amsterdam (1986)

\bibitem{Raguet}
 Raguet, H., Fadili, J., Peyre, G.: Generalized forward-backward splitting. SIAM J.
Imaging Sci. 6(3), 1199-1226 (2011).

\bibitem{Sadeghi}
 Sadeghi, H., Banert, S., Giselsson, P.: Forward-backward splitting with deviations for
monotone inclusions. https://arxiv.org/abs/2112.00776.

\bibitem{SunADMM} Sun, T., Yin, P., Cheng, L., Jiang, H.: Alternating direction method of multipliers with difference of convex functions. Adv. Comput. Math. 44, 723–744 (2018).

\bibitem{Tao}
Tao, P.D., Souad, E.B.: Algorithms for solving a class of nonconvex optimization problems: methods of subgradients. Math. Optim. 129, 249-271 (1986).

\bibitem{Thi} Thi, H.A.L., Le, H.M.,  Nguyen, V.V., Dinh, T.P.: A DC programming approach for feature selection in support vector machines learning. Adv Data Anal Classif 2, 259–278 (2008).

\bibitem{Thi3} Thi H.A.L., Nguyen, M.C., Dinh, T.P.: A DC programming approach for finding communities in networks, Neural Comput. 26(12), 2827-54 (2014).



\bibitem{Thi2} Thi,  H.A.L., Le, H.M., Phan, D.N., Tran, B.: Novel DCA based algorithms for a special class of nonconvex problems with application in machine learning, Appl. Mathem. Comp., 409 (15), 125904 (2021).

\bibitem{Yao} Yao, C., Jiang, X.: A globally convergent difference-of-convex algorithmic framework and application to log-determinant optimization problems. arXiv:2306.02001.


\bibitem{Tu}
 Tu, K., Zhang, H., Gao, H. et al. A hybrid Bregman alternating direction method of
multipliers for the linearly constrained difference-of-convex problems. J. Glob. Optim. 76,
665-693 (2020).

\bibitem{Ukey} Ukey, K.P., Alvi, A.S.: Text classification using support vector machine. J. Softw Eng. Appl., 5(12), 55–58 (2012).

\bibitem{Vapnik1} Vapnik, VN.: The Nature of Statistical Learning Theory.  Springer-Verlag, New York, 1995.

\bibitem{Wang} Wang, J., Zhao, C., Huo, Z., Qiao, Y., Sima, H.:  High quality proposal feature generation for crowded pedestrian detection. Pattern Recognition, 128(5), 108605 (2022).

\bibitem{Wen} Wen, J., Fang, X., Cui, J., Fei, L., Yan, K., Chen, Y., Xu, Y.: {Robust Sparse Linear Discriminant Analysis}, {IEEE Trans. Circ. Syst Video Tech}, 29, 390-403 (2019).

 \bibitem{Ho} Ho, V.T.: Advanced machine learning techniques based on DC programming and DCA. Machine Learning [cs.LG]. Universit\'{e} de Lorraine, 2017. English. NNT : 2017LORR0289.tel-01810274

\bibitem{Wen}
 Wen, B., Chen, X., Pong, T.K.: A proximal difference-of-convex algorithm with extrapolation. Comput. Optim. Appl. 69, 297-324 (2018).

\bibitem{Wu}
 Wu, P., Cheng, J.: Nonconvex regularized gradient projection sparse reconstruction for
massive MIMO channel estimation. IEEE T. Commun. 69(11), 7722-7735 (2021).









\end{thebibliography}
\end{document}